\theoremstyle{plain}
\newtheorem{theorem}{Theorem}
\newtheorem{corollary}[theorem]{Corollary}
\newtheorem{lemma}[theorem]{Lemma}
\theoremstyle{definition}
\newtheorem{definition}[theorem]{Definition}
\newtheorem{example}[theorem]{Example}
\theoremstyle{remark}
\newtheorem*{remark}{Remark}
\newtheorem*{notation}{Notation}
\numberwithin{theorem}{section}
\def\pair#1#2{\langle #1, #2\rangle}
\def\Mof#1{\Cal M(A)}
\newcommand{\N}{{\mathbb N}}
\newcommand{\Z}{{\mathbb Z}}
\def\Set{{\text{\bf Set}}}
\def\CatFil{{\textbf{\bf Fil}}}
\def\Unif{{\text{\bf Unif}}}
\def\SemiUnif{{\text{\bf SemiUnif}}}
\newcommand{\Ind}{\text{\bf Ind}}
\newcommand{\SuperUnif}{\text{\bf SupUnif}}
\newcommand{\SuperEqv}{\textbf{\bf SupEqv}}
\newcommand{\SemiSuperEqv}{\textbf{\bf SemiSuperEqv}}
\newcommand{\D}{\mathbf d}
\newcommand{\Cg}{\operatorname{Cg}}
\newcommand{\SEg}{\operatorname{SEg}}
\newcommand{\SUg}{\operatorname{SUg}}
\newcommand{\Ug}{\operatorname{Ug}}
\newcommand{\Con}{\operatorname{Con}}
\newcommand{\Eqv}{\operatorname{Eqv}}
\newcommand{\Fg}{\operatorname{Fg}}
\newcommand{\Ig}{\operatorname{Ig}}
\newcommand{\OpSemiUnif}{\operatorname{SemiUnif}}
\newcommand{\OpSemiSuperUnif}{\operatorname{SemiSupUnif}}
\newcommand{\OpSuperUnif}{\operatorname{SupUnif}}
\newcommand{\nat}{\operatorname{nat}}
\newcommand{\Idl}{\operatorname{Idl}}
\newcommand{\Fil}{\operatorname{Fil}}
\newcommand{\nullset}{\{\}}
\newcommand{\I}{\operatorname{I}}
\newcommand{\OpUnif}{\operatorname{Unif}}
\newcommand{\OpSuperEqv}{\operatorname{SupEqv}}
\newcommand{\OpSemiSuperEqv}{\operatorname{SemiSupEqv}}
\newcommand{\Rel}{\operatorname{Rel}}
\newcommand{\tr}{\operatorname{tr}}
\def\congruence{on\-gru\-ence\discretionary{-}{}{-}}
\def\conM/{c\congruence mod\-u\-lar}
\def\ConM/{C\congruence mod\-u\-lar}
\def\conD/{c\congruence dis\-trib\-u\-tive}
\def\ConD/{C\congruence dis\-trib\-u\-tive}
\def\conP/{c\congruence per\-mut\-a\-ble}
\def\ConP/{C\congruence per\-mut\-a\-ble}
\def\conMity/{\conM/\-i\-ty}
\def\ConMity/{\ConM/\-i\-ty}
\def\conDity/{c\congruence dis\-trib\-u\-tiv\-i\-ty}
\def\ConDity/{C\congruence dis\-trib\-u\-tiv\-i\-ty}
\def\conPity/{c\congruence per\-mut\-a\-bil\-i\-ty}
\def\ConPity/{C\congruence per\-mut\-a\-bil\-i\-ty}
\def\usprv/{un\-der\-ly\-ing-set-pre\-ser\-ving}
\def\ie/{{i.e.}}
\def\Ie/{{I.e.}}
\def\eg/{{e.g.}}
\def\Eg/{{E.g.}}
\def\etc/{{etc.}}
\newdimen\mysubdimen
\newbox\mysubbox
\def\subwhat#1#2#3{{
\setbox\mysubbox=\hbox{#3}
\mysubdimen=\wd\mysubbox
\setbox\mysubbox=\hbox{$#1#2$}
\ifnum\mysubdimen>\wd\mysubbox
\vtop{
\hbox to\mysubdimen{\hfil\box\mysubbox\hfil}
\nointerlineskip
\hbox{#3}}
\else
\mysubdimen=\wd\mysubbox
\vtop{
\box\mysubbox
\nointerlineskip
\hbox to\mysubdimen{#3}}
\fi
}}
\begin{document}

% One author
\title[Uniformities, Superequivalences, and  Superuniformities]{Uniformities, Superequivalences, and  Superuniformities\\ of \\Algebras in Congruence-Modular Varieties}
\author{William H. Rowan}
\address{PO Box 20161 \\
         Oakland, California 94620}
\email{william.rowan@ncis.org}
%\thanks{thanks}
% End one author

%\def\cite#1{[#1]}

\keywords{superequivalence, superuniformity}
\subjclass[2010]{Primary: 08A99; Secondary: 08B99, 08C99}
\date{\today}

\begin{abstract}
We introduce superequivalence and superuniform spaces, and \begin{comment}
investigate commutator operations on compatible uniformities,
superequivalences and superuniformities. We explore the properties of these commutator operations in the congruence-modular case. The commutator operations make sense when applied to equivalence relations, superequivalences, or superuniformities compatible with a subclone of the term operations containing a set of Day operations, yielding binary operations on those lattices which are completely additive but do not satisfy the property that $[x,y]\leq x\wedge y$.
\end{comment}
prove that the lattices of compatible superequivalences and superuniformities on an algebra are modular when the algebra is in a congruence-modular variety.
\end{abstract}

\maketitle

\section*{Introduction}

\begin{comment}
The \emph{commutator} is a binary operation on
congruences in the congruence lattice $\Con A$ of an
algebra $A$ in a congruence-modular variety, and which
is sometimes defined for more general varieties. See \cite{F-McK} for an explanation of the commutator and its applications, as well as some relevant background on Universal Algebra and many references to the literature at the time it was written.  See \cite{KK} for a later perspective on Commutator Theory, including new applications and references to more recent literature.

The commutator is so named because it generalizes the
notion of the commutator of normal subgroups of a group.
(Of course, the variety of groups is
congruence-modular.)  As a further example, the variety
of commutative rings is congruence-modular, and for a
commutative ring
$A$, the commutator is simply the product of ideals. That
is, if
$I_\alpha$ denotes the ideal corresponding to
$\alpha\in\Con A$, then we have
$I_{[\alpha,\beta]}=I_\alpha I_\beta$.
\end{comment}

The purpose of this paper is introduce (in Section~\ref{S:SuperEquivalences}) \emph{superequivalences} and (in Section~\ref{S:SuperUniformities}) \emph{superuniformities}. Like a uniformity on a set, superequivalences and superuniformities are structures which satisfy versions of the reflexive, symmetric, and transitive laws.  For a uniformity, that base structure is a filter of relations on the set; for a superequivalence, it is an ideal in the lattice of relations; for a superuniformity, it is an ideal in the lattice of filters of relations.

\begin{comment}
We show that by relativizing the definitions of these commutators to a subset of the term operations, we obtain a reasonable binary operation on the set of equivalence relations (uniformities, superequivalences, superuniformities) compatible with just those operations, using the example of commutative rings in Section~\ref{S:Field} to make a concrete discussion. In this example, we end up with something resembling fractional ideals and the multiplication of fractional ideals, although as an innovation in Number Theory, we make no claims of usefulness.

[rewrite this paragraph to take such results into account]
Commutator theory (on congruences) works
best for congruences on algebras in congruence-modular
varieties. The same is true of the commutators of
compatible superequivalences and compatible superuniformities described here. In fact, [FIXME is this true?] the
commutator of congruences $\alpha$ and
$\beta$ becomes a special case of that of
compatible superuniformities, when we view
$\alpha$ and
$\beta$ as the compatible superuniformities $\Ig\{\,\Fg\{\,\alpha\,\}\,\}$
and
$\Ig\{\,\Fg\{\,\beta\,\}\,\}$ that they generate, because we have
$\Ig\{\,\Fg\{\,[\alpha,\beta]\,\}\,\}=[\Ig\{\,\Fg\{\,\alpha\,\}\,\},
\Ig\{\,\Fg\{\,\beta\,\}\,\}]$.

\begin{comment}
We follow the development of Commutator Theory in
\cite{F-McK} fairly closely
\end{comment}

 The thesis of
\cite{r02}, where compatible uniformities were first
studied systematically in the context of Universal
Algebra, is that compatible uniformities can be considered
a generalization of congruences. Often, there is
a reasonably direct translation of congruence-theoretic
arguments into uniformity-theoretic ones. This philosophy can be applied as well to super\-equivalences and super\-uniformities.
\begin{comment}
, and using it, we are able to generalize (in
Sections~\ref{S:Term}, \ref{S:Weak}, \ref{S:Commutators}, \ref{S:Clones}, and
\ref{S:Commutator}) the concept of
$C(\alpha,\beta;\delta)$ ($\alpha$ \emph{centralizes}
$\beta$ \emph{modulo} $\delta$) to compatible superequivalences and compatible
superuniformities, and in the congruence-modular case, to
define $[\mathcal U,\mathcal V]$, for compatible uniformities $\mathcal U$ and $\mathcal V$, $[\mathcal I,\mathcal I']$, for compatible superequivalences $\mathcal I$ and $\mathcal I'$, and $[\mathcal E,\mathcal E']$, for compatible superuniformities $\mathcal E$ and $\mathcal E'$, to be the least 
compatible uniformity $\mathcal W$ (respectively, compatible superequivalence $\mathcal I''$, compatible superuniformity $\mathcal E''$) such that $C(\mathcal U,\mathcal V;\mathcal W)$ (respectively, $C(\mathcal I,\mathcal I';\mathcal I'')$, $C(\mathcal E,\mathcal E';\mathcal E'')$).
\end{comment}
We find that the philosophy works even better for superequivalences and  superuniformities than it does for uniformities, because of the fact that the lattices involved are algebraic.

The cases of compatible  equivalence relations or congruences (see \cite{F-McK} regarding modularity) superequivalences (Section~\ref{S:SuperEquivalences}), uniformities (\cite{r02}, Section~\ref{S:Unif}), and superuniformities (Section~\ref{S:SuperEquivalences}) are shown in the following table having four rows, which could be extended to an infinite number of rows summarizing the principal answer of this paper with respect to arbitrary iterates of the functors $\Fil$ (filters) and $\Idl$ (ideals), applied to the lattices of relations on a set:
\begin{center}
  \begin{tabular}{ | l | c | l | c | }
    \hline
    Lattice & Algebraic & Compatible Equivalence & Modular \\ \hline
    $\Rel S$ & yes & congruence & yes \\ \hline
    $\Idl\Rel S$ & yes & compatible  superequivalence & yes \\ \hline
    $\Fil\Rel S$ & no & compatible uniformity & no \\ \hline
    $\Idl\Fil\Rel S$ & yes &  compatible superuniformity & yes \\
  	\hline
  \end{tabular}
\end{center}
where a \emph{compatible equivalence} (third column) is an element of the lattice (first column) satisfying generalized versions of the reflexive, symmetric, and transitive laws, and also, compatible with the operations of the algebra.

Thus, this paper is essentially a study of the two new rows in the table. For a brief discussion of the general theorem hinted at by the table, see Section~\ref{S:Pattern}. As to the usefulness of this information, we discuss that briefly in Section~\ref{S:Conclu}, Conclusions.

\begin{comment}
In the congruence-modular case, the properties of
the commutator on compatible uniformities described in
Section~\ref{S:Properties} duplicate those of the
commutator of congruences, with some regrettable gaps such as the lack of a proof of complete additivity.  However, using the algebraicity of the lattices involved, we do  prove complete additivity for commutators of compatible superequivalences and compatible superuniformities.

[rewrite this later after content is settled, and perhaps move it to a summary of the paper]
Section~\ref{S:Examp} is devoted to miscellaneous matters,
including commutators of congruential uniformities on
commutative rings. We prove that in that case, the two
definitions of the commutator on congruential
uniformities, the one given by the general definition of
Section~\ref{S:Commutator} and the other given by the
formula of Section~\ref{S:Congruential}, coincide. This
appears to be a special property of commutative rings; in
general, we do not know even whether the commutator
operation of Section~\ref{S:Commutator}, applied to
congruential uniformities, always gives a congruential
uniformity. We also discuss in this section
the case of varieties which are
congruence-distributive, where we show that as in the
case of the commutator of congruences, the commutator of
two compatible uniformities is simply their meet.

[rewrite]
In the last section, we
discuss the current state of some questions about
compatible uniformities, uniformity lattices, and
commutators of uniformities.
\end{comment}

\section*{Preliminaries}

In this section are some preliminaries providing some explanations of concepts which may be unfamiliar. The reader may also want to consult references about Universal Algebra such as \cite{B-S} and \cite{McK-McN-T}.

\subsection{Notation}
The composition of relations $R$ and $R'$ will be denoted by $R\circ R'$. For associative operations such as $\circ$, we will often denote $R\circ R$ by $R^{\circ 2}$, $R\circ R\circ R$ by $R^{\circ 3}$, etc. The opposite of a relation $R$ will be denoted by $R^{-1}$.
The identity relation on a set $S$ will be denoted by $\Delta_S$, or simply $\Delta$.

\subsection{Category theory} We follow \cite{macl} in
terminology and notation.  In particular, $1_A$ will stand
for the identity arrow on an object $A$ in a category
$\mathbf C$.  For example, $A$ could be an algebra, and $\mathbf C$ could be the category of algebras in a variety $\mathbf V$ containing $A$. (On the other hand, if $1$ is a constant symbol for an algebra $A$ of some type of algebras, we will denote the $1$ element of $A$ by $1^A$.)

\subsection{Lattice theory} The reader should be
familiar with lattices. We use $\top^L$ and
$\bot^L$, or just $\top$ and $\bot$, to denote the greatest and least elements of a
lattice $L$, assuming they exist, and
$\wedge^L$ and
$\vee^L$, or just $\wedge$ and $\vee$, for the meet and join operations.

\subsection{Compatible relations}

Suppose $A$ is an algebra, and $R$ is a relation on $|A|$, the underlying set of $A$. We say that $R$ is compatible if for every operation $w$ of the algebra $A$, which is, say, $n$-ary, and every $n$-tuple of pairs $\pair{x_i}{y_i}$ such that $x_i\mathrel Ry_i$, we have $w(x_1,\ldots,x_n)\mathrel Rw(y_1,\ldots,y_n)$.

\subsection{Monotone Galois connections}

A (monotone) Galois connection between lattices $L_1$ and $L_2$ is a pair of monotone functions $f:L_1\to L_2$, $g:L_2\to L_1$ such that $f(x)\leq y$ iff $x\leq g(y)$. We say that $f$ is the \emph{left adjoint} of $g$, and $g$ is the \emph{right adjoint} of $f$. We say
\[\pair fg:L_1\rightharpoonup L_2,\]
borrowing the notation used for pairs of adjoint functors.  Note that unlike with adjoint functors, when we talk about monotone Galois connections, left adjoints are unique, as are right adjoints.

The left adjoint preserves all joins which exist in $L_1$ and the right adjoint preserves all meets which exist in $L_2$. Conversely, if $f:L_1\to L_2$ is a monotone function which preserves all joins, and $L_1$ is complete, then we can define a right adjoint $g:L_2\to L_1$ by $g:y\mapsto\bigvee_{f(x)\leq y}x$, and dually, given a monotone function $g:L_2\to L_1$ that preserves all joins, and if $L_2$ is complete, we can define a left adjoint $f:L_1\to L_2$ by $f:x\mapsto\bigwedge_{x\leq g(y)}y$. This may be compared with the Special Adjoint Functor Theorem: \cite[Theorem~V.8.2]{macl}.

A first example of a monotone Galois connection: given sets $S$ and $T$ and a function $f:S\to T$, we have
\[\pair{[R\mapsto f(R)]}{[R\mapsto f^{-1}(R)]}:\Rel S\rightharpoonup \Rel T;\]
and we will see many more examples in Sections~\ref{S:Unif}, \ref{S:SuperEquivalences}, and~\ref{S:SuperUniformities}, where we have used the theory of monotone Galois connections as an organizing principle. See \cite{DP} for more about monotone Galois connections.

\subsection{Filters}

If $L$ is a lattice, then a nonempty subset $F\subseteq
L$ is called a \emph{filter} if $y\geq x\in F$ implies
$y\in F$ and $x$, $y\in F$ imply $x\wedge y\in F$.

If $S\subseteq L$ is a nonempty set, then the
\emph{filter generated by $S$}, denoted by $\Fg^L S$
or simply $\Fg S$, is the subset of elements of $L$ that
are greater than or equal to a finite meet of elements of
$S$. An important special case, given $x\in L$, is
$\Fg\{\,x\,\}$, the
\emph{principal filter generated by $x$}, which is just
the set of elements of $L$ greater than or equal to $x$.

We order the set of filters by reverse inclusion (for formal consistency between the theories of congruences and compatible uniformities) and the filters
in a nonempty lattice form a complete lattice. The meet
of a tuple of filters $F_i$ is given
by $\bigwedge_iF_i=\Fg(\bigcup_iF_i)$. The join of the
tuple is the intersection: $\bigvee_iF_i=\bigcap_iF_i$.

If $F$ is a filter, a \emph{base} for $F$ is a subset
$B\subseteq F$ such that $x\in F$ implies $b\leq x$ for
some $b\in B$. If $L$ is a lattice, then a subset
$B\subseteq L$ is a base for a filter of $L$, or
\emph{filter base}, iff given any $x$, $y\in B$, there is
a
$z\in B$ such that
$z\leq x\wedge y$.

\subsection{Ideals}

The concept of an \emph{ideal} in a  lattice is dual to that of a filter. Thus,
if $L$ is a lattice, a nonempty subset $J\subseteq
L$ is called an \emph{ideal} if $x\geq y\in J$ implies
$y\in J$ and $x$, $y\in J$ imply $x\vee y\in F$.

If $S\subseteq L$ is a nonempty set, then the
\emph{ideal generated by $S$}, denoted by $\Ig^L S$
or simply $\Ig S$, is the subset of elements of $L$ that
are less than or equal to a finite join of elements of
$S$. An important special case, given $x\in L$, is
$\Ig\{\,x\,\}$, the
\emph{principal ideal generated by $x$}, which is just
the set of elements of $L$ less than or equal to $x$.

The set of ideals is ordered by inclusion and the ideals
in a nonempty lattice form a complete lattice. The meet
of a tuple of ideals $F_i$ is given
by $\bigwedge_iJ_i=\bigcap_iF_i$. The join of the
tuple is $\bigvee_iJ_i=\Ig\{\,\bigcup_iJ_i\,\}$.

We call the notion, dual to a base for a filter, a \emph{ceiling of an ideal}. Every \emph{directed set} (set $S\subseteq L$ such that if $x$, $y\in S$, there is a $z\in S$ such that $x\leq z$ and $y\leq z$) is a ceiling for an ideal.

\subsection{Universal algebra}

We assume familiarity with universal algebra, as
explained for example in \cite{B-S}.  We prefer to allow
an algebra to have an empty underlying set, however. We
denote the underlying set of an algebra $A$ by $|A|$.

If $R$ is a binary relation on (the underlying
set of) an algebra $A$, then we denote by
$\Cg R$ the smallest congruence $\alpha\in\Con A$ such
that
$R\subseteq\alpha$.

We will call a compatible, reflexive, symmetric relation on an algebra a \emph{tolerance}.

\subsection{Congruence-permutable varieties}

A variety of algebras $\mathbf V$ is \emph{congruence-permutable} (or, a \emph{Mal'tsev variety}) if for every $A\in\mathbf V$, for every $\alpha$, $\beta\in\Con A$, $\alpha\circ\beta=\beta\circ\alpha$. A variety is congruence-permutable iff \cite{Mal} there is a ternary term $p$, called a \emph{Mal'tsev term}, satisfying the identities $p(x,x,y)=y$ and $p(x,y,y)=x$. For example, the variety of groups is congruence-permutable because it has the Mal'tsev term $p(x,y,z)=xy^{-1}z$.

\subsection{Congruence-modular varieties}

A
variety of algebras
$\mathbf V$ is
\emph{congruence-modular} if for every algebra $A\in\mathbf V$,
$\Con A$ is a modular lattice.
A theorem
\cite{Day} states that a variety
$\mathbf V$ is congruence-modular iff there is a natural number number $\D$ and a sequence $m_0$, $\ldots$, $m_\D$ of quaternary
terms, called \emph{Day terms}, satisfying the following
identities:
\begin{enumerate}
\item[(D1)] For all $i$, $m_i(x,y,y,x)=x$;
\item[(D2)] $m_0(x,y,z,w)=x$;
\item[(D3)] $m_\D(x,y,z,w)=w$;
\item[(D4)] for even $i<\D$,
$m_i(x,x,y,y)=m_{i+1}(x,x,y,y)$; and
\item[(D5)] for odd $i<\D$,
$m_i(x,y,y,z)=m_{i+1}(x,y,y,z)$.
\end{enumerate}

\begin{example} \label{E:Maltsev}
A congruence-permutable variety, with Mal'tsev term $p$,
is necessarily congruence-modular, with one possible sequence of Day terms given by $\D=2$ and
\begin{align*}
m_0(x,y,z,w)&=x;\\
m_1(x,y,z,w)&=p(x,p(z,y,x),w);\\
m_2(x,y,z,w)&=w.\end{align*}
Note that for the variety of groups, when we use the
Mal'tsev term
$p(x,y,z)=xy^{-1}z$, this gives $m_1(x,y,z,w)=yz^{-1}w$.
\end{example}

A key theorem utilizing Day terms is the following:

\begin{theorem}[{\cite[Lemma~2.3]{F-McK}}]\label{T:KeyLemma}
Let $\mathbf V$ be a variety having a sequence $\mathbf m$ of Day terms, and let $A\in\mathbf V$, $\gamma\in\Con A$, and $a$, $b$, $c$, $d\in A$ with $b\mathrel\gamma d$. Then $a\mathrel\gamma c$ iff for all $i$, $m_i(a,a,c,c)\mathrel\gamma m_i(a,b,d,c)$.
\end{theorem}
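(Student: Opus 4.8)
The plan is to prove the two implications separately, writing $p_i:=m_i(a,a,c,c)$ and $q_i:=m_i(a,b,d,c)$, and using throughout that $\gamma$, being a congruence, is compatible with every term operation $m_i$.

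For the forward implication I would assume $a\mathrel\gamma c$ and show that both $p_i$ and $q_i$ collapse to $a$ modulo $\gamma$. Since $a\mathrel\gamma c$, substituting $c\mapsto a$ in the last two coordinates gives $p_i=m_i(a,a,c,c)\mathrel\gamma m_i(a,a,a,a)$, and the latter equals $a$ by (D1) (taking $y=x$). Similarly, since $b\mathrel\gamma d$ and $a\mathrel\gamma c$, substituting $d\mapsto b$ in the third coordinate and $c\mapsto a$ in the fourth gives $q_i=m_i(a,b,d,c)\mathrel\gamma m_i(a,b,b,a)=a$, again by (D1). Hence $p_i\mathrel\gamma a\mathrel\gamma q_i$ for every $i$, as required; note that this direction needs no interaction between different indices $i$.

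For the converse I would assume $p_i\mathrel\gamma q_i$ for all $i$ and build a single $\gamma$-chain from $a$ to $c$. Three ingredients are needed. The boundary identities (D2), (D3) give $p_0=q_0=a$ and $p_{\D}=q_{\D}=c$. For even $i<\D$, identity (D4) with $x=a$, $y=c$ gives the exact equality $p_i=p_{i+1}$. For odd $i<\D$, identity (D5) gives $m_i(a,b,b,c)=m_{i+1}(a,b,b,c)$; since $b\mathrel\gamma d$, substituting $d\mapsto b$ in the third coordinate yields $q_i\mathrel\gamma m_i(a,b,b,c)$ and $q_{i+1}\mathrel\gamma m_{i+1}(a,b,b,c)$, and chaining these with the equality produces $q_i\mathrel\gamma q_{i+1}$. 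The key idea is this parity split: $p$ telescopes on the \emph{even} steps and $q$ on the \emph{odd} steps, exactly matching (D4) and (D5).

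It then remains to interleave these, using the hypothesis $p_i\mathrel\gamma q_i$ to cross between the two sequences, obtaining
\[
a=p_0=p_1\mathrel\gamma q_1\mathrel\gamma q_2\mathrel\gamma p_2=p_3\mathrel\gamma q_3\mathrel\gamma q_4\mathrel\gamma p_4=\cdots,
\]
a walk that alternates even equalities in $p$, the bridging relation $p_i\mathrel\gamma q_i$, and odd $\gamma$-steps in $q$, and that terminates at $p_{\D}=c$ or $q_{\D}=c$. Transitivity of $\gamma$ then yields $a\mathrel\gamma c$. The only real care required is the bookkeeping at the top of the chain, where a short case distinction on the parity of $\D$ confirms that the walk indeed reaches the index $\D$ (and hence $c$); the substantive step, and the main obstacle, is spotting the even/odd telescoping together with the $d\mapsto b$ substitution that powers the odd $q$-steps.
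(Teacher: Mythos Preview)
Your proof is correct and follows the standard argument. The paper itself does not supply a proof of this theorem, citing it from Freese--McKenzie; however, the paper's proof of the closely related Lemma~\ref{T:Lemma1Eqv} (the superequivalence analogue of the converse direction) proceeds by exactly the same even/odd telescoping walk from $u_0=a$ to $u_{\D}=c$ that you describe, using (D4) to bridge via the $p$-sequence on even steps and (D5) together with the substitution $d\mapsto b$ to bridge via $m_i(a,b,b,c)$ on odd steps.
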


\begin{comment}
Later on we will sometimes assume that a congruence-modular variety $\mathbf V$ has a distinguished sequence $\mathbf m$ of Day terms. \end{comment}
It is not true that there is always just one set of Mal'tsev terms or one possible set of Day terms:

\begin{example}
Consider the variety $\mathbf V$ having a type with two sets of group operations, and group identities within each set, but no identities connecting the basic operations of the two sets. There are many models, e.g. the set $A$ of integers from $0$ to $999$ with two group structures: addition modulo $1000$, and a randomly-chosen abelian group structure for a set of $1000$ elements. It seems the usual Mal'tsev terms have little relation, at least, as operations on $A$, and the same is true of the Day terms based on them as given in Example~\ref{E:Maltsev}.
\end{example}

\begin{comment}
When we have an algebra $A$ in a congruence-modular variety $\mathbf V$ with distinguished sequence $\mathbf m$ of Day terms, we will denote by $A_{\mathbf m}$ the underlying set of $A$, provided with the term operations $m_i$ as basic operations.  For example, if $R$ is a commutative ring, and we choose the sequence of Day terms $\mathbf m=\langle x,y-z+w,w\rangle$, then $A_{\mathbf m}$ is the underlying abelian heap (i.e., the underlying abelian group, with the zero constant operation forgotten).
\end{comment}

Our treatments of uniformities, superequivalences, and superuniformities will not cover completion, which is a standard part of the theory of uniform spaces (and covered in \cite{r02}), because completion is not yet worked out completely for superequivalences and superuniformities. We hope to address this lack in the not-too-distant future, because completion of uniform spaces turns out to be the analog of formation of quotient algebras in ordinary Universal Algebra.

\begin{comment}
\subsection{Commutator theory}

If
$A$ is an algebra in a congruence-modular variety, and
$\alpha$,
$\beta\in\Con A$, we can define the
commutator $[\alpha,\beta]$ as the least
$\delta\in\Con A$ such that $\alpha$ \emph{centralizes}
$\beta$ \emph{modulo}
$\delta$, or in other words, such that $\delta$ satisfies
the \emph{$\alpha,\beta$-term condition}. See the first
part of Section~\ref{S:Term} for detailed definitions. 

While Sections~\ref{S:Term}, \ref{S:Weak}, \ref{S:Commutators}, and \ref{S:Clones} give these definitions in
the general case, and stop short of defining
$[\alpha,\beta]$ or its generalization to uniformities, superequivalences, or superuniformities,
Section~\ref{S:Commutator} takes into account the assumption of
congruence-modularity and proves the simplifications
that make the definition of
$[\alpha,\beta]$ so reasonable, and which apply as well to the other cases.
\end{comment}

\subsection{Uniform, superequivalence, and superuniform universal algebra}
Universal algebra over the base category $\Unif$ of
uniform spaces, as opposed to the category of sets, was
first studied systematically in
\cite{r02}. We introduce here the category $\SuperEqv$ of superequivalence spaces and the category $\SuperUnif$ of superuniform spaces, and do a bit of Universal Algebra in those categories.   It will be best if the reader has
access to \cite{r02} while reading this paper, but we also devote the next section to some of the basic definitions and results about $\Unif$.

\section{Uniformities}
\label{S:Unif}

We denote the set of binary relations on a set $S$ by
$\Rel S$. $\Rel S$, ordered by inclusion, is a complete
lattice.

Consider the conditions on a filter
$\mathcal U$ of relations on $S$:
\begin{enumerate}
\item[($\mathrm{U_r}$)] if $U\in\mathcal U$, then
$\Delta_S\overset{\text{def}}=\{\,\pair ss\mid s\in
S\,\}\subseteq U$;
\item[($\mathrm{U_s}$)] if $U\in\mathcal U$, then $U^{-1}\in\mathcal
U$; and
\item[($\mathrm{U_t}$)] if $U\in\mathcal U$, then $V^{\circ 2}\subseteq
U$ for some $V\in\mathcal U$.
\end{enumerate}
We say that $\mathcal U$ is \emph{reflexive} if it satisfies condition ($\mathrm{U_r}$), \emph{symmetric} if it satisfies condition ($\mathrm{U_s}$), \emph{transitive} if it satisfies condition ($\mathrm{U_t}$), a \emph{semiuniformity}
if $\mathcal U$ satisfies conditions ($\mathrm{U_r}$) and ($\mathrm{U_s}$),
and a \emph{uniformity} if it satisfies ($\mathrm{U_r}$), $(\mathrm{U_s}$), and ($\mathrm{U_t}$).

\begin{theorem}\label{T:UB}
A filter base $\mathcal B$ of relations on a set $S$ is a base for a uniformity iff
\begin{enumerate}
\item[$\mathrm{(BU_r)}$] for all $U\in\mathcal B$, $\Delta_S\subseteq U$;
\item[$\mathrm{(BU_s)}$] for all $U\in\mathcal B$, there is a $V\in\mathcal B$ such that $V^{-1}\subseteq U$; and
\item[$\mathrm{(BU_t)}$] for all $U\in\mathcal B$, there is a $V\in\mathcal B$ such that $V^{\circ 2}\subseteq U$.
\end{enumerate}
\end{theorem}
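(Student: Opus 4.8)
The plan is to prove both implications by passing through the filter $\mathcal U=\Fg\mathcal B$ generated by $\mathcal B$. By the definition of a base, $\mathcal B$ is a base for a uniformity precisely when $\mathcal U$ is a uniformity (the filter base hypothesis guarantees that $\mathcal U$ is the unique filter of which $\mathcal B$ is a base). The first thing I would record is a preliminary observation: since $\mathcal B$ is a filter base, every finite meet of elements of $\mathcal B$ lies above a single element of $\mathcal B$, so that $\mathcal U=\{\,U\in\Rel S: B\subseteq U\text{ for some }B\in\mathcal B\,\}$. This reduces every filter-membership question to the existence of a single element of $\mathcal B$ below the relation in question, and it is this reduction, rather than the bare defining conditions of a uniformity, that will do the real work.

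For the ``only if'' direction I would assume $\mathcal U$ is a uniformity and verify the three base conditions. Each $U\in\mathcal B$ lies in $\mathcal U$, so $(\mathrm{BU_r})$ is immediate from $(\mathrm{U_r})$. For $(\mathrm{BU_s})$, I would apply $(\mathrm{U_s})$ to obtain $U^{-1}\in\mathcal U$, then use the base characterization to find $V\in\mathcal B$ with $V\subseteq U^{-1}$; taking opposites gives $V^{-1}\subseteq U$. For $(\mathrm{BU_t})$, I would apply $(\mathrm{U_t})$ to obtain some $W\in\mathcal U$ with $W^{\circ 2}\subseteq U$, descend to $V\in\mathcal B$ with $V\subseteq W$, and conclude $V^{\circ 2}\subseteq W^{\circ 2}\subseteq U$ by monotonicity of composition.

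For the ``if'' direction I would assume $(\mathrm{BU_r})$--$(\mathrm{BU_t})$ and check the filter conditions on $\mathcal U$. Given $U\in\mathcal U$, fix $B\in\mathcal B$ with $B\subseteq U$. Then $(\mathrm{BU_r})$ gives $\Delta_S\subseteq B\subseteq U$, which is $(\mathrm{U_r})$. For $(\mathrm{U_s})$, choose $V\in\mathcal B$ with $V^{-1}\subseteq B$; taking opposites yields $V\subseteq B^{-1}\subseteq U^{-1}$, so $U^{-1}\in\mathcal U$. For $(\mathrm{U_t})$, choose $V\in\mathcal B$ with $V^{\circ 2}\subseteq B\subseteq U$; since $V\in\mathcal B\subseteq\mathcal U$, this is exactly what $(\mathrm{U_t})$ demands.

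I do not expect any step to present a genuine obstacle: the whole argument is a bookkeeping exercise in the monotonicity of $(-)^{-1}$ and $(-)^{\circ 2}$ with respect to inclusion, combined with the reduction of finite meets to single base elements. The only points that will demand care are the correct handling of opposites (turning $V^{-1}\subseteq B$ into $V\subseteq B^{-1}$ and back) and keeping straight that it is the filter base hypothesis which licenses replacing ``a finite meet of base elements'' by ``a single base element'' throughout.
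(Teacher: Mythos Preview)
Your proof is correct. The paper states this theorem without proof, treating it as a standard background fact about uniformities; your argument fills in exactly the routine verification one would expect, and every step is sound.
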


\begin{theorem} A filter $\mathcal U$ satisfies $\mathrm{(U_r)}$ iff every relation in $\mathcal U$ is reflexive.
\end{theorem}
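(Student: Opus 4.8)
The plan is to observe that the content of the statement is purely definitional: condition $(\mathrm{U_r})$ and the assertion that every member of $\mathcal U$ is reflexive are two formulations of the same requirement, once one recalls what reflexivity of a single relation amounts to. So I would not set up any machinery; I would simply unwind definitions and check the two implications.

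First I would record the elementary fact that a relation $R$ on $S$ is reflexive precisely when $\Delta_S\subseteq R$. Indeed, $R$ being reflexive means $s\mathrel Rs$ for every $s\in S$, that is, $\pair ss\in R$ for every $s\in S$, which is exactly the inclusion $\Delta_S\subseteq R$. This single observation is the crux of the whole argument.

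For the forward direction I would assume that $\mathcal U$ satisfies $(\mathrm{U_r})$ and take an arbitrary $U\in\mathcal U$; condition $(\mathrm{U_r})$ gives $\Delta_S\subseteq U$, so by the recorded fact $U$ is reflexive, and since $U$ was arbitrary every relation in $\mathcal U$ is reflexive. Conversely, if every $U\in\mathcal U$ is reflexive, then each such $U$ satisfies $\Delta_S\subseteq U$ by the same fact, which is precisely the conclusion demanded by $(\mathrm{U_r})$; hence $\mathcal U$ satisfies $(\mathrm{U_r})$.

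I do not expect any genuine obstacle: the filter structure of $\mathcal U$ is never used, and the equivalence holds verbatim for an arbitrary subset of $\Rel S$. The only point worth flagging is the contrast with the symmetric and transitive conditions, where $(\mathrm{U_s})$ and $(\mathrm{U_t})$ are strictly weaker than requiring every member of $\mathcal U$ to be symmetric or transitive (a symmetric relation must satisfy $U=U^{-1}$, not merely $U^{-1}\in\mathcal U$), so no analogous clean restatement is available in those cases; this is presumably why the reflexive case is singled out here as a short separate theorem.
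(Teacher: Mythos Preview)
Your proposal is correct; the paper states this theorem without proof, since, as you observe, the equivalence is immediate from the definition of reflexivity of a relation as containment of $\Delta_S$. Your additional remark contrasting this with $(\mathrm{U_s})$ and $(\mathrm{U_t})$ is accurate and helpful context, though not part of the argument itself.
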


\begin{theorem} A filter $\mathcal U$ satisfies $\mathrm{(U_s)}$
iff
$\mathcal U=\mathcal U^{-1}$, where $\mathcal
U^{-1}=\{\,U^{-1}\mid U\in\mathcal U\,\}$.
\end{theorem}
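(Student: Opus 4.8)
The plan is to derive both directions of the equivalence from the single elementary fact that relation inversion $R\mapsto R^{-1}$ is an involution on $\Rel S$, that is, $(R^{-1})^{-1}=R$ for every $R\in\Rel S$. No properties of filters beyond the set-level definition of $\mathcal U^{-1}=\{\,U^{-1}\mid U\in\mathcal U\,\}$ are required, so the whole argument reduces to a set-equality manipulation.

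For the forward implication, I would assume $\mathcal U$ satisfies $\mathrm{(U_s)}$ and establish the equality $\mathcal U=\mathcal U^{-1}$ by double inclusion. For $\mathcal U^{-1}\subseteq\mathcal U$, take an arbitrary member of $\mathcal U^{-1}$; by definition it has the form $U^{-1}$ with $U\in\mathcal U$, and then $\mathrm{(U_s)}$ yields $U^{-1}\in\mathcal U$ at once. For the reverse inclusion $\mathcal U\subseteq\mathcal U^{-1}$, take $U\in\mathcal U$; applying $\mathrm{(U_s)}$ gives $U^{-1}\in\mathcal U$, and writing $U=(U^{-1})^{-1}$ exhibits $U$ as the inverse of a member of $\mathcal U$, so $U\in\mathcal U^{-1}$. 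The involution property is precisely what is used in this second step.

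For the converse, I would assume $\mathcal U=\mathcal U^{-1}$ and verify $\mathrm{(U_s)}$ directly: given $U\in\mathcal U$, the relation $U^{-1}$ lies in $\mathcal U^{-1}$ by definition, and since $\mathcal U^{-1}=\mathcal U$ we conclude $U^{-1}\in\mathcal U$, which is exactly the statement of $\mathrm{(U_s)}$.

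There is no genuine obstacle in this proof; the only point deserving a moment's care is keeping straight the direction of the defining relation for $\mathcal U^{-1}$ and invoking involutivity correctly in the inclusion $\mathcal U\subseteq\mathcal U^{-1}$, where one must rewrite $U$ as $(U^{-1})^{-1}$ rather than merely as $U^{-1}$.
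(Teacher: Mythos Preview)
Your proof is correct. The paper states this theorem without proof, treating it as an immediate observation; your argument via the involution $(R^{-1})^{-1}=R$ is exactly the elementary verification one would supply.
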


\begin{theorem} If $\mathcal U$, $\mathcal V$ are
filters in $\Rel S$, then $\{\,U\circ V\mid U\in\mathcal
U,\,V\in\mathcal V\,\}$ is a base for a filter $\mathcal
U\circ\mathcal V$ in $\Rel S$.
\end{theorem}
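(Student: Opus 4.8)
The plan is to verify directly the filter-base criterion recorded above: a subset $B$ of the lattice $\Rel S$ (ordered by inclusion, so that $\wedge$ is $\cap$) is a filter base precisely when it is nonempty and, for any $x$, $y\in B$, there is some $z\in B$ with $z\subseteq x\cap y$. Here $B=\{\,U\circ V\mid U\in\mathcal U,\ V\in\mathcal V\,\}$, which is nonempty because $\mathcal U$ and $\mathcal V$ are filters and hence nonempty. So the whole content reduces to producing, from two composites, a third composite contained in their intersection.

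Accordingly, I would fix two members $U_1\circ V_1$ and $U_2\circ V_2$ of $B$, with $U_1,U_2\in\mathcal U$ and $V_1,V_2\in\mathcal V$. Since $\mathcal U$ and $\mathcal V$ are filters, they are closed under finite meets, so $U_1\cap U_2\in\mathcal U$ and $V_1\cap V_2\in\mathcal V$; thus the relation $(U_1\cap U_2)\circ(V_1\cap V_2)$ again lies in $B$, and it is the natural candidate for $z$. The one fact I would invoke is that composition of relations is monotone in each argument: if $R\subseteq R'$ and $T\subseteq T'$ then $R\circ T\subseteq R'\circ T'$. Applying this with $U_1\cap U_2\subseteq U_i$ and $V_1\cap V_2\subseteq V_i$ gives $(U_1\cap U_2)\circ(V_1\cap V_2)\subseteq U_i\circ V_i$ for $i=1,2$, and hence this composite is contained in $(U_1\circ V_1)\cap(U_2\circ V_2)$, as the criterion requires.

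There is essentially no genuine obstacle here; the only point to keep in mind is that one should not expect equality in the last containment. In general composition does not distribute over intersection, so $(U_1\cap U_2)\circ(V_1\cap V_2)$ may be strictly smaller than $(U_1\circ V_1)\cap(U_2\circ V_2)$. But the filter-base criterion only demands the one-sided containment $z\subseteq x\cap y$, which the monotonicity argument supplies outright, so this mismatch causes no trouble. The filter $\mathcal U\circ\mathcal V$ named in the statement is then, by definition, $\Fg B$: the upward closure under $\subseteq$ of the set of composites.
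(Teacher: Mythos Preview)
Your argument is correct and is exactly the standard verification of the filter-base criterion; the paper states this theorem without proof, so there is nothing further to compare.
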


Note that for filters $\mathcal U$ and $\mathcal V$ of
reflexive relations, $\mathcal U\cap\mathcal
V\leq\mathcal U\circ\mathcal V$. Thus, $\mathcal
U\leq\mathcal U\circ\mathcal U$ if $\mathcal U$ satisfies
$\mathrm{(U_r)}$.

\begin{theorem} A filter $\mathcal U$ satisfies $\mathrm{(U_t)}$
iff
$\mathcal U\circ\mathcal U\leq\mathcal U$.
\end{theorem}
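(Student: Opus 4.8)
The plan is to unwind both sides of the claimed equivalence into elementary statements about relations and then match them up directly. First I would record what the inequality $\mathcal U\circ\mathcal U\leq\mathcal U$ means concretely. Since filters are ordered by reverse inclusion, $\mathcal U\circ\mathcal U\leq\mathcal U$ says exactly that $\mathcal U\subseteq\mathcal U\circ\mathcal U$ as sets of relations. By the preceding theorem, $\mathcal U\circ\mathcal U$ is the filter with base $\{\,U_1\circ U_2\mid U_1,U_2\in\mathcal U\,\}$, so a relation $W$ lies in $\mathcal U\circ\mathcal U$ iff $U_1\circ U_2\subseteq W$ for some $U_1,U_2\in\mathcal U$. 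Hence $\mathcal U\circ\mathcal U\leq\mathcal U$ is equivalent to the assertion that for every $U\in\mathcal U$ there exist $U_1,U_2\in\mathcal U$ with $U_1\circ U_2\subseteq U$.

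The forward direction is then immediate. If $\mathcal U$ satisfies $(\mathrm{U_t})$, then given $U\in\mathcal U$ we obtain $V\in\mathcal U$ with $V^{\circ 2}\subseteq U$, and taking $U_1=U_2=V$ witnesses the membership of $U$ in $\mathcal U\circ\mathcal U$. Thus $\mathcal U\subseteq\mathcal U\circ\mathcal U$, i.e.\ $\mathcal U\circ\mathcal U\leq\mathcal U$.

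For the converse I would exploit that a filter is closed under finite meets, which in $\Rel S$ are intersections. Assume $\mathcal U\circ\mathcal U\leq\mathcal U$ and fix $U\in\mathcal U$; by the reformulation above, choose $U_1,U_2\in\mathcal U$ with $U_1\circ U_2\subseteq U$. Setting $V=U_1\cap U_2$ gives $V\in\mathcal U$ since $\mathcal U$ is a filter, and $V\subseteq U_1$, $V\subseteq U_2$. Monotonicity of relation composition then yields $V^{\circ 2}=V\circ V\subseteq U_1\circ U_2\subseteq U$, which is exactly condition $(\mathrm{U_t})$.

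The only point requiring care is bookkeeping rather than genuine difficulty: one must keep the reverse-inclusion convention straight when translating $\leq$ between filters into $\subseteq$ between sets of relations, and one must remember to replace the pair $U_1,U_2$ by their intersection $V$ so that a single relation appears on both sides of the composition. Monotonicity of $\circ$ and closure of filters under intersection are the two structural facts doing all the work.
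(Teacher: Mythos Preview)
Your proof is correct. The paper states this theorem without proof, treating it as a routine verification; your argument supplies exactly the expected details, correctly handling the reverse-inclusion ordering on filters and using closure under finite intersections to pass from a pair $U_1,U_2$ to a single $V$.
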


\begin{notation} If $U\in\mathcal U$, where $\mathcal U$
satisfies $\mathrm{(U_t)}$, and $n>0$, then by induction we can show that there
is a $V\in\mathcal U$ such that $V^{\circ n}\subseteq U$. We denote such a
$V$ by
$^nU$. This notation must be used with care, particularly
in relation to quantifiers; it is simply a shorthand for the statement that
there exists such a $V$ and that we will denote one such $V$ by
$^nU$.
\end{notation}

\subsection{The categories $\Unif$ and $\SemiUnif$}

Suppose we consider pairs $\pair S{\mathcal F}$ consisting of a set $S$ and a filter of relations on that set. If $\pair S{\mathcal F}$, $\pair T{\mathcal F'}$ are two such pairs, and $f:S\to T$ is a function, then we denote by $f^{-1}(\mathcal F')$ the filter of relations $\Fg\{\,f^{-1}(R)\mid R\in\mathcal F'\,\}$. We denote by $f(\mathcal F)$ the filter of relations on $T$ generated by the sets $f(R)$, $R\in\mathcal F$, and we have $\pair{[\mathcal F\mapsto f^{-1}(\mathcal F)]}{\mathcal F\mapsto f(\mathcal F)]}:\Fil\Rel S\rightharpoonup\Fil\Rel T$.

\begin{definition} A \emph{uniform space} (\emph{semiuniform space}) is a pair $\pair S{\mathcal U}$ such that $\mathcal U$ is a uniformity (respectively, semiuniformity) on the set $S$.
\end{definition}

\begin{definition}If $\pair S{\mathcal U}$ and $\pair T{\mathcal V}$ are uniform (semiuniform) spaces, and $f:S\to T$, then we say that $f$ is \emph{uniform} if the two equivalent conditions $\mathcal U\leq f^{-1}(\mathcal V)$, $f(\mathcal U)\leq\mathcal V$ are satisfied.
\end{definition}

The uniform (semiuniform) spaces, and the uniform functions between them, form a category, $\Unif$ (respectively, $\SemiUnif$).

\begin{example}
\label{E:CHSpace} Suppose that $S$ is a set, provided with a compact, hausdorff topology $\mathcal T$. Then there is a unique uniformity $\mathcal U(\mathcal T)$ on $S$ having $\mathcal T$ as its topology.

\begin{proof}
Suppose $\pair S{\mathcal U_1}$ and $\pair S{\mathcal U_2}$ are two uniform spaces with underlying topology $\mathcal T$, and consider the identity function $1_S$. It is continuous from $\pair S{\mathcal T}$ to $\pair S{\mathcal T}$,
 hence uniformly continuous from $\pair S{\mathcal U_1}$ to $\pair S{\mathcal U_2}$ and vice versa, because continuous functions are uniformly continuous on
 compact sets. Thus, $\mathcal U_1\leq 1_S^{-1}(\mathcal U_2)=\mathcal U_2$, and similarly, $\mathcal U_2\leq\mathcal U_1$.
\end{proof}
\end{example}

\subsection{The lattices $\OpUnif S$ and $\OpSemiUnif S$}
We denote the set of uniformities on a set $S$ by
$\OpUnif S$, and the set of semiuniformities by
$\OpSemiUnif S$. We order
these sets by reverse inclusion, i.e., the ordering
inherited from
$\Fil\Rel S$.

The join of a tuple of
semiuniformities is simply the intersection, and
$\OpSemiUnif S$ is a distributive lattice. The theory of
joins of uniformities is more difficult, as we shall see.

The meet of an arbitrary tuple of uniformities on $S$, in
the lattice
$\Fil\Rel S$, is a uniformity. Thus, the subset $\OpUnif S\subseteq\Fil\Rel S$ is closed under
arbitrary meets, and is a complete lattice with the inherited ordering.
The same is
true for $\OpSemiUnif S$.

\subsection{$\Ug\mathcal F$}
If $\mathcal F$ is a filter of relations on a set $S$, then we define the \emph{uniformity generated by $\mathcal F$}, as $\Ug\mathcal F=\bigwedge_{\mathcal F\leq\mathcal U\in\OpUnif S}\mathcal U$, and we have $\pair{[\mathcal F\mapsto\Ug\mathcal F]}{[\mathcal U\mapsto\mathcal U]}:\Fil\Rel S\rightharpoonup\OpUnif S$.  (We may use the notation $\Ug^S\mathcal F$ to emphasize that we want to consider the least \emph{not-necessarily-compatible} uniformity on $S$ larger than $\mathcal F$. We discuss \emph{compatible} uniformities on an algebra a bit later.)

\subsection{Permutability} Permutability of congruences
is an important condition in Universal Algebra, and the theory of this
condition generalizes easily to uniformities.

\begin{lemma}\label{T:Permutability}
Let $\mathcal U$, $\mathcal V$ be uniformities on a set $S$.
Then:
 $\mathcal U\circ\mathcal V\leq\mathcal V\circ\mathcal U$ iff
 $\mathcal V\circ\mathcal U\leq\mathcal U\circ\mathcal V$.
\end{lemma}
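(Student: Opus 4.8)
The plan is to exploit the symmetry of uniformities together with the behavior of the inverse operation on filters. The key observation is that the map $\Phi\colon\mathcal F\mapsto\mathcal F^{-1}$ is a monotone involution of the lattice $\Fil\Rel S$: it is its own inverse, and since the ordering is by reverse inclusion, $\mathcal F\leq\mathcal G$ (that is, $\mathcal G\subseteq\mathcal F$) implies $\mathcal G^{-1}\subseteq\mathcal F^{-1}$, i.e. $\mathcal F^{-1}\leq\mathcal G^{-1}$. Hence applying $\Phi$ to any inequality between filters preserves its direction.

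Next I would record the interaction of $\Phi$ with $\circ$. For relations one has $(U\circ V)^{-1}=V^{-1}\circ U^{-1}$, and since $\mathcal U\circ\mathcal V$ is by definition the filter with base $\{\,U\circ V\mid U\in\mathcal U,\ V\in\mathcal V\,\}$, inverting the base elements shows that $\{\,V^{-1}\circ U^{-1}\mid U\in\mathcal U,\ V\in\mathcal V\,\}$ is a base for $(\mathcal U\circ\mathcal V)^{-1}$; that is, $(\mathcal U\circ\mathcal V)^{-1}=\mathcal V^{-1}\circ\mathcal U^{-1}$. The only point requiring care here is the routine verification that inverting a filter commutes with passing from a base to the generated filter, namely $(\Fg B)^{-1}=\Fg\{\,b^{-1}\mid b\in B\,\}$, which holds because $R\mapsto R^{-1}$ is an order-automorphism of $\Rel S$.

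Now I would invoke that $\mathcal U$ and $\mathcal V$ are uniformities, hence symmetric, so by the earlier characterization of $\mathrm{(U_s)}$ we have $\mathcal U^{-1}=\mathcal U$ and $\mathcal V^{-1}=\mathcal V$. Combining this with the previous step gives $(\mathcal U\circ\mathcal V)^{-1}=\mathcal V\circ\mathcal U$ and, symmetrically, $(\mathcal V\circ\mathcal U)^{-1}=\mathcal U\circ\mathcal V$. Finally, assume $\mathcal U\circ\mathcal V\leq\mathcal V\circ\mathcal U$. Applying the monotone map $\Phi$ and substituting these two identities yields $\mathcal V\circ\mathcal U\leq\mathcal U\circ\mathcal V$, which is the asserted reverse implication; the converse is obtained by the identical argument with the roles of $\mathcal U$ and $\mathcal V$ interchanged.

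I do not anticipate a genuine obstacle: the content of the lemma is entirely the symmetry trick familiar from the congruence case, where $\alpha\circ\beta\subseteq\beta\circ\alpha$ forces equality by passing to opposite relations. The only places demanding attention are the reverse-inclusion convention for the ordering, so that $\Phi$ comes out monotone rather than order-reversing, and the bookkeeping identifying $(\mathcal U\circ\mathcal V)^{-1}$ with $\mathcal V\circ\mathcal U$ at the level of filter bases.
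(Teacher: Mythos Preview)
Your proposal is correct and follows essentially the same approach as the paper: apply the inverse map to both sides of the inequality, use $(\mathcal U\circ\mathcal V)^{-1}=\mathcal V^{-1}\circ\mathcal U^{-1}$, and then invoke symmetry of uniformities to replace $\mathcal U^{-1}$, $\mathcal V^{-1}$ by $\mathcal U$, $\mathcal V$. The paper's proof is simply a terser version of yours, omitting the explicit verification that the inverse map is monotone and that it interacts with $\circ$ as claimed.
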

\begin{proof} We have

\begin{align*}
\mathcal U\circ\mathcal V\leq\mathcal V\circ\mathcal U&\implies
(\mathcal U\circ\mathcal V)^{-1}\leq(\mathcal V\circ\mathcal U)^{-1}\\
&\implies 
{\mathcal V}^{-1}\circ{\mathcal U}^{-1}\leq
{\mathcal U}^{-1}\circ{\mathcal V}^{-1}\\
&\implies\mathcal V\circ\mathcal U\leq\mathcal U\circ\mathcal V;
\end{align*}
the converse holds by symmetry.

\end{proof}

\begin{definition} We say that filters of relations $\mathcal F$, $\mathcal G$ \emph{permute} if $\mathcal F\circ\mathcal G=\mathcal G\circ\mathcal F$.
\end{definition}

The join operation in the lattice of
uniformities may be difficult to deal with in the general
case, but the case where $\mathcal U$ and $\mathcal V$
permute is an easy and important one:

\begin{theorem} Let $\mathcal U$, $\mathcal V\in\OpUnif
S$. Then $\mathcal U\vee\mathcal V=\mathcal
U\circ\mathcal V$ iff $\mathcal U$ and $\mathcal V$ permute.
\end{theorem}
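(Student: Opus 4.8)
The plan is to prove both implications directly from the algebra of the composition operation $\circ$ on filters, together with the fact (established just above this statement) that the join $\mathcal U\vee\mathcal V$ taken in $\OpUnif S$ is always a uniformity, hence in particular symmetric. Throughout I will use that $\circ$ is associative and monotone on $\Fil\Rel S$, that $\mathcal U\leq\mathcal U\circ\mathcal V$ and $\mathcal V\leq\mathcal U\circ\mathcal V$ whenever $\mathcal U$, $\mathcal V$ are reflexive (each base relation $U\circ V$ contains both $U$ and $V$, so it lies in $\mathcal U$ and in $\mathcal V$), and that a uniformity $\mathcal W$ satisfies $\mathcal W\circ\mathcal W=\mathcal W$, since $\mathcal W\leq\mathcal W\circ\mathcal W$ is reflexivity and $\mathcal W\circ\mathcal W\leq\mathcal W$ is transitivity.

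For the forward direction, suppose $\mathcal U\vee\mathcal V=\mathcal U\circ\mathcal V$. Because $\OpUnif S$ is a complete lattice, $\mathcal U\vee\mathcal V$ is a uniformity, so $\mathcal U\circ\mathcal V$ is symmetric, that is $(\mathcal U\circ\mathcal V)^{-1}=\mathcal U\circ\mathcal V$. On the other hand, reversing a base relation gives $(U\circ V)^{-1}=V^{-1}\circ U^{-1}$, so that $(\mathcal U\circ\mathcal V)^{-1}=\mathcal V^{-1}\circ\mathcal U^{-1}=\mathcal V\circ\mathcal U$, using $\mathcal U^{-1}=\mathcal U$ and $\mathcal V^{-1}=\mathcal V$. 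Combining the two equalities yields $\mathcal U\circ\mathcal V=\mathcal V\circ\mathcal U$, i.e. $\mathcal U$ and $\mathcal V$ permute.

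For the converse, assume $\mathcal U\circ\mathcal V=\mathcal V\circ\mathcal U$. I would first check that $\mathcal U\circ\mathcal V$ is a uniformity: it is reflexive because each $U\circ V$ contains $\Delta_S$; it is symmetric because $(\mathcal U\circ\mathcal V)^{-1}=\mathcal V\circ\mathcal U=\mathcal U\circ\mathcal V$; and it is transitive because associativity and permutability give
\[(\mathcal U\circ\mathcal V)\circ(\mathcal U\circ\mathcal V)=\mathcal U\circ(\mathcal V\circ\mathcal U)\circ\mathcal V=\mathcal U\circ(\mathcal U\circ\mathcal V)\circ\mathcal V=(\mathcal U\circ\mathcal U)\circ(\mathcal V\circ\mathcal V)=\mathcal U\circ\mathcal V.\]
Next, $\mathcal U\circ\mathcal V$ is an upper bound of $\mathcal U$ and $\mathcal V$ by the reflexivity remark above. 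Finally, it is the \emph{least} such: if $\mathcal W$ is any uniformity with $\mathcal U\leq\mathcal W$ and $\mathcal V\leq\mathcal W$, then monotonicity of $\circ$ together with $\mathcal W\circ\mathcal W=\mathcal W$ gives $\mathcal U\circ\mathcal V\leq\mathcal W\circ\mathcal W=\mathcal W$. Hence $\mathcal U\circ\mathcal V=\mathcal U\vee\mathcal V$.

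The individual computations are short, so the main thing to watch is the bookkeeping forced by the \emph{reverse}-inclusion ordering: the join in $\OpUnif S$ is a least upper bound in that order, an upper bound of $\mathcal U$, $\mathcal V$ is a uniformity $\mathcal W$ that is set-theoretically contained in $\mathcal U\cap\mathcal V$, and $\mathcal U\leq\mathcal U\circ\mathcal V$ corresponds to the set inclusion $\mathcal U\circ\mathcal V\subseteq\mathcal U$. The only genuinely nonformal step is the transitivity computation, where permutability is exactly what collapses the fourfold composite back to $\mathcal U\circ\mathcal V$; this is also the one point in the converse where the hypothesis is actually used.
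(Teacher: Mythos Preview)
Your proof is correct and follows essentially the same approach as the paper. The only cosmetic difference is in the forward direction: the paper first observes $\mathcal V\circ\mathcal U\leq\mathcal U\vee\mathcal V=\mathcal U\circ\mathcal V$ and then invokes Lemma~\ref{T:Permutability} (whose proof is precisely the inverse computation you carry out), whereas you argue directly from the symmetry of the uniformity $\mathcal U\circ\mathcal V$ to get $\mathcal V\circ\mathcal U=(\mathcal U\circ\mathcal V)^{-1}=\mathcal U\circ\mathcal V$ in one step.
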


\begin{proof} ($\impliedby$): It is trivial that $\mathcal
U\circ\mathcal V$ is a filter and satisfies $\mathrm{(U_r)}$. If
$U\in\mathcal U$ and $V\in\mathcal V$, then since
$\mathcal V\circ\mathcal U\leq\mathcal U\circ\mathcal
V$, there are $\bar U\in\mathcal U$ and $\bar
V\in\mathcal V$ such that $\bar V\circ\bar U\subseteq
U\circ V$. But, $\bar V\circ\bar U=((\bar
U^{-1})\circ(\bar V^{-1}))^{-1}$. Thus, $\mathcal
U\circ\mathcal V$ also satisfies $\mathrm{(U_s)}$.
Finally, $(\mathcal U\circ\mathcal V)\circ(\mathcal
U\circ\mathcal V)=\mathcal U\circ(\mathcal V\circ\mathcal U)\circ\mathcal V\leq(\mathcal U\circ\mathcal
U)\circ(\mathcal V\circ\mathcal V)\leq\mathcal
U\circ\mathcal V$, verifying $\mathrm{(U_t)}$. Thus, $\mathcal
U\circ\mathcal V$ is a uniformity. Since $\mathcal
U\leq\mathcal U\circ\mathcal V$ and $\mathcal
V\leq\mathcal U\circ\mathcal V$, we have $\mathcal
U\vee\mathcal V\leq\mathcal U\circ\mathcal V$.
However, $\mathcal U\circ\mathcal V\leq\mathcal U\vee
\mathcal V$ by $\mathrm{(U_t)}$.

 ($\implies$): If $\mathcal U\vee\mathcal V=\mathcal
U\circ\mathcal V$, then
$\mathcal V\circ\mathcal
U\leq\mathcal U\vee\mathcal V=\mathcal U\circ\mathcal V$, implying by Lemma~\ref{T:Permutability} that $\mathcal U$ and $\mathcal V$ permute.
\end{proof}

\subsection{$\Ug\mathcal S$ when $\mathcal S$ is a semiuniformity}
Based on \cite[Theorem~2.2]{weber}, we want to present a formula for $\Ug\mathcal S$ when $\mathcal S$ is a semiuniformity. First, a notation;

\begin{notation}[See {\cite[Notation~2.1]{weber}}]
If $\{\,U_n\,\}_{n\in\mathbb N_+}$ is a sequence of relations on a set $S$, then let
\[[\,U_n\,:\,n\in\mathbb N_+]=\bigcup_{n=1}^\infty\bigcup_{\gamma\in\Gamma_n}U_{\gamma(1)}\circ\ldots\circ U_{\gamma(n)},\]
where $\Gamma_n$ denotes the set of permutations of $\{\,1\,\ldots\,n\,\}$.
\end{notation}

\begin{theorem}\label{T:WTheorem}
Let $\mathcal S$ be a semiuniformity on a set $S$. Then the sets
\[[\,U_n\,:\,n\in\mathbb N_+\,],\]
where $U_n\in\mathcal S$ for $n\in \mathbb N_+$, form a base for the uniformity $\Ug\mathcal S$.
\end{theorem}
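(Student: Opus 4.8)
My plan is to prove the statement in three stages: show the candidate sets form a filter base, upgrade this to a base for a uniformity via Theorem~\ref{T:UB}, and then identify the generated uniformity with $\Ug\mathcal S$. Throughout write $b(\{U_n\})=[\,U_n:n\in\mathbb N_+\,]$ for the set attached to a sequence $\{U_n\}$ in $\mathcal S$, and let $\mathcal B$ be the collection of all such $b(\{U_n\})$. First I would check that $\mathcal B$ is a filter base: given sequences $\{U_n\}$, $\{V_n\}$ in $\mathcal S$, put $W_n=U_n\cap V_n$, which lies in $\mathcal S$ since $\mathcal S$ is a filter; because composition is monotone, each defining product $W_{\gamma(1)}\circ\cdots\circ W_{\gamma(m)}$ is contained in the corresponding products for $\{U_n\}$ and for $\{V_n\}$, so $b(\{W_n\})\subseteq b(\{U_n\})\cap b(\{V_n\})$.

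Next I would verify conditions $(\mathrm{BU_r})$, $(\mathrm{BU_s})$, $(\mathrm{BU_t})$ of Theorem~\ref{T:UB}. For $(\mathrm{BU_r})$, the one-letter term of $b(\{U_n\})$ is $U_1\supseteq\Delta_S$ by $(\mathrm U_r)$. For $(\mathrm{BU_s})$, reversing a product reverses the order of its indices and replaces each factor by its opposite, and a permutation read backwards is again a permutation, so $b(\{U_n\})^{-1}=b(\{U_n^{-1}\})$; since $\mathcal S$ is symmetric, $b(\{U_n^{-1}\})\in\mathcal B$ and its opposite is exactly $b(\{U_n\})$. The real work is $(\mathrm{BU_t})$: given $U=b(\{U_n\})$ I would set $V_n=U_{2n-1}\cap U_{2n}\in\mathcal S$ and $V=b(\{V_n\})$. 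Any pair of $V^{\circ2}$ lies in a product $V_{\gamma(1)}\circ\cdots\circ V_{\gamma(a)}\circ V_{\delta(1)}\circ\cdots\circ V_{\delta(b)}$; bounding the first block by the odd relations, $V_{\gamma(i)}\subseteq U_{2\gamma(i)-1}$, and the second by the even ones, $V_{\delta(j)}\subseteq U_{2\delta(j)}$, produces a composite of $U$'s all of whose indices are distinct. Inserting, wherever convenient, the finitely many $U_\ell$ with missing index $\ell$ (each harmless since it contains $\Delta_S$) turns this into a genuine permutation term of $b(\{U_n\})$. Thus $V^{\circ2}\subseteq U$, and Theorem~\ref{T:UB} gives that $\mathcal B$ is a base for a uniformity $\mathcal U_0$.

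Finally I would show $\mathcal U_0=\Ug\mathcal S$, for which (by the defining adjunction for $\Ug$) it suffices to prove that $\mathcal U_0$ is the least uniformity with $\mathcal S\le\mathcal U_0$. Since every $b(\{U_n\})$ contains $U_1\in\mathcal S$, each base element lies in the filter $\mathcal S$, so $\mathcal U_0\subseteq\mathcal S$, i.e.\ $\mathcal S\le\mathcal U_0$. For minimality, let $\mathcal W$ be any uniformity with $\mathcal S\le\mathcal W$, so $\mathcal W\subseteq\mathcal S$, and fix $W\in\mathcal W$. Using transitivity repeatedly I would choose a decreasing sequence $W_0=W$, $W_1$, $W_2,\dots$ in $\mathcal W\subseteq\mathcal S$ with $W_{n+1}^{\circ3}\subseteq W_n$. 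An induction on the number of factors shows that for distinct positive indices $i_1,\dots,i_k$ with minimum $j$ one has $W_{i_1}\circ\cdots\circ W_{i_k}\subseteq W_{j-1}$: split off the factor of smallest index, apply the inductive bound $\subseteq W_j$ to each side, and contract $W_j^{\circ3}\subseteq W_{j-1}$. Every permutation term of the sequence $W_1,W_2,\dots$ uses indices $\{1,\dots,m\}$, so $j=1$ and the term lies in $W_0=W$; hence $b(\{W_n\})\subseteq W$ and $W\in\mathcal U_0$. Therefore $\mathcal W\subseteq\mathcal U_0$, i.e.\ $\mathcal U_0\le\mathcal W$, for every such $\mathcal W$, which is the required minimality.

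I expect $(\mathrm{BU_t})$, together with its mirror image in the last paragraph, to be the crux. The definition of $[\,U_n:n\,]$ insists that each permutation term use every index exactly once, so the difficulty is purely combinatorial: one must re-index a composite coming from repeated or non-consecutive indices into a legitimate permutation term. The even/odd splitting handles the doubling caused by squaring, reflexive padding repairs the resulting gaps, and the $\circ3$-induction absorbs the worst-case chain of distinct indices; each device addresses one of the ways the bookkeeping of indices can otherwise fail.
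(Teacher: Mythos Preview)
Your proof is correct and follows essentially the same approach as the paper's: the same odd/even splitting $V_n=U_{2n-1}\cap U_{2n}$ for $(\mathrm{BU_t})$, and the same ${}^3$-refinement sequence for minimality. You supply more detail than the paper does (the reflexive padding to reach a genuine permutation term, and the explicit induction showing $W_{i_1}\circ\cdots\circ W_{i_k}\subseteq W_{j-1}$), but the argument is the same.
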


\begin{proof}
It is clear that the given set of relations is a filter base.

We prove the conditions (Theorem~\ref{T:UB}) for this set to be a base for a uniformity:

$\mathrm{(BU_r)}$: Since every $U\in\mathcal S$ is reflexive by $(U_r)$, so is every relation of the form $[\,U_n\,:\,n\in\mathbb N_+\,]$ for a sequence $\{\,U_n\,\}_{n\in\mathbb N_+}$ of relations $U_n\in\mathcal S$.

$\mathrm{(BU_s)}$: Given $\{\,U_n\,\}_{n\in\mathbb N_+}$, we have the sequence $\{\,U_n^{\mathrm{op}}\,\}_{n\in\mathbb N_+}$, with $U_n^{\mathrm{op}}\in\mathcal S$ by $\mathrm{(U_s)}$, and we have
\[[\,U_n^{\mathrm{op}}\,:\,n\in\mathbb N_+\,]^{\mathrm{op}}\subseteq[\,U_n\,:\,n\in\mathbb N_+\,].\]

$\mathrm{(BU_t)}$: Given $\{\,U_n\,\}_{n\in\mathbb N_+}$, with $U_n\in\mathcal S$, we let $V=[\,U_{2n}\cap U_{2n-1}\,:\,n\in\mathbb N_+\,]$; we have $V^{\circ2} \subseteq [\,U_n\,:\,n\in\mathbb N_+\,]$.

Thus, the formula gives a uniformity $\mathcal U$. We have $\mathcal S\leq\mathcal U$, because given a sequence $\{\,U_n\,\}_{n\in\mathbb N_+}$, we have $U_1\subseteq [\,U_n\,:\,n\in\mathbb N_+\,]$. Suppose then that $\mathcal V$ is a uniformity with $\mathcal S\leq\mathcal V$, and let $V\in\mathcal V$. Then $V\in\mathcal S$; let $V_0=V$, $\ldots$, $V_n={^3V_{n-1}}$. We have
$[\,V_n\,:\,n\in\mathbb N_+\,]\subseteq V$, showing that $\mathcal U\leq\mathcal V$.
\end{proof}

\subsection{Joins} The formula for $\Ug\mathcal S$ can be applied to give a formula for the join of a tuple of uniformities, since if $\mathcal U_i$ are uniformities, $\bigcap_i\mathcal U_i$ will be a semiuniformity:

\begin{theorem} If $\mathcal U_i$, $i\in I$ are
elements of $\OpUnif S$, then $\bigvee_i\mathcal
U_i=\Ug(\bigcap_i\mathcal U_i)$.
\end{theorem}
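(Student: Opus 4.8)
The plan is to derive the formula directly from the adjunction $\pair{[\mathcal F\mapsto\Ug\mathcal F]}{[\mathcal U\mapsto\mathcal U]}:\Fil\Rel S\rightharpoonup\OpUnif S$ recorded when $\Ug$ was introduced, together with the description of joins in $\Fil\Rel S$. Recall that both lattices carry the reverse-inclusion ordering, and that in $\Fil\Rel S$ the join of a tuple of filters is their intersection. Since $\Ug$ is the left adjoint of the inclusion $\OpUnif S\hookrightarrow\Fil\Rel S$, it preserves every join existing in $\Fil\Rel S$; and since each $\mathcal U_i$ is already a uniformity, $\Ug\mathcal U_i=\mathcal U_i$ (the component of the unit at a uniformity is the identity). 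Putting these together, I would compute the join taken in $\OpUnif S$ as
\[\bigvee_i\mathcal U_i=\bigvee_i\Ug\mathcal U_i=\Ug\Bigl(\bigvee_i^{\Fil\Rel S}\mathcal U_i\Bigr)=\Ug\Bigl(\bigcap_i\mathcal U_i\Bigr),\]
where the middle equality is preservation of joins by the left adjoint and the last uses that the join in $\Fil\Rel S$ is the intersection.

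Alternatively, and to keep the argument self-contained, I would argue directly from the universal property of $\Ug$. By definition $\Ug(\bigcap_i\mathcal U_i)$ is the least uniformity lying above the filter $\bigcap_i\mathcal U_i$ in the reverse-inclusion ordering, while $\bigvee_i\mathcal U_i$, computed in $\OpUnif S$, is the least uniformity lying above every $\mathcal U_i$. It therefore suffices to check that a uniformity $\mathcal V$ satisfies $\mathcal U_i\leq\mathcal V$ for all $i$ if and only if $\bigcap_i\mathcal U_i\leq\mathcal V$. Unwinding the reverse-inclusion ordering, $\mathcal U_i\leq\mathcal V$ means $\mathcal V\subseteq\mathcal U_i$; this holds for every $i$ exactly when $\mathcal V\subseteq\bigcap_i\mathcal U_i$, i.e.\ when $\bigcap_i\mathcal U_i\leq\mathcal V$. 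Hence the two families of upper bounds coincide, and so do their least elements.

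There is essentially no hard step here; the only point requiring care is the direction of the ordering, since the reverse-inclusion convention turns the join in $\Fil\Rel S$ into an intersection rather than a union. The substantive content---that this intersection is in general only a semiuniformity and must be closed up to a uniformity via $\Ug$---was already isolated in the remark preceding the statement and in Theorem~\ref{T:WTheorem}, which in fact supplies an explicit base for $\Ug\mathcal S$ when $\mathcal S=\bigcap_i\mathcal U_i$ is the semiuniformity in question.
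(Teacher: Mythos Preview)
Your argument is correct. The paper does not actually supply a proof of this theorem; it merely states it after observing that $\bigcap_i\mathcal U_i$ is a semiuniformity, leaving the justification implicit. Your direct argument via the universal property of $\Ug$ is precisely what fills that gap, and your first argument via the Galois connection $\pair{\Ug}{\text{inclusion}}$ is the clean abstract version of the same reasoning---indeed, the paper sets up that connection explicitly a few paragraphs earlier, so invoking join-preservation of the left adjoint is entirely in the spirit of the exposition.
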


\subsection{Compatible uniformities} If $R$ is a relation on an
algebra $A$, we say that $R$ is \emph{compatible} (with
the operations of $A$) if $\mathbf a\mathrel R\mathbf
a'$ implies $\omega^A(\mathbf a)\mathrel R
\omega^A(\mathbf a')$ for each operation symbol $\omega$,
where
$\mathbf a\mathrel R\mathbf{a'}$ means that $a_i\mathrel R
a'_i$ for all $i$.

We say
that a filter
$\mathcal U$ of reflexive relations on an algebra $A$ is
\emph{compatible} if for each $U\in\mathcal U$, and
each basic operation symbol $\omega$,
there is a $\bar U\in\mathcal U$ such
that $\omega(\bar U)=\{\,\pair{\omega(\mathrm
x)}{\omega(\mathbf y)}\mid x_i\mathrel{\bar
U}y_i\text{ for all }i\,\}\subseteq U$. In this case, for
any term
$t$, given $U\in\mathcal U$, there is a $\bar
U\in\mathcal U$ such that $t(\bar U)\subseteq U$.

If $A$ is an algebra, then by $\OpUnif A$ ($\OpSemiUnif A$) we will mean the lattice of compatible uniformities (respectively, semiuniformities). If instead we want to talk about the lattice of not-necessarily compatible uniformities or semiuniformities, we will say $\OpUnif|A|$ or $\OpSemiUnif|A|$.
An arbitrary meet of compatible uniformities (semiuniformities) is compatible.
Thus, the subset of $\OpUnif|A|$ of compatible uniformities is a complete lattice,

\subsection{$\Ug^A\mathcal F$} If $A$ is an algebra, and $\mathcal F$ is a filter of relations on $A$, we define $\Ug^A\mathcal F=\bigwedge_{\mathcal F\leq\mathcal U\in\OpUnif A}\mathcal U$, and we then have $\pair{[\mathcal F\mapsto\Ug^A\mathcal F]}{[\mathcal U\mapsto\mathcal U]}:\Fil\Rel S\rightharpoonup\OpUnif A.$

\begin{theorem} Let $A$ be an algebra.
The meet and join (in the lattice of uniformities on the set $|A|$) of a tuple of
compatible uniformities are compatible.
\end{theorem}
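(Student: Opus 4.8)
The plan is to treat the meet and the join separately, since the meet is routine and the join carries all the difficulty. For the meet, note that in $\Fil\Rel|A|$ the meet of the $\mathcal U_i$ is $\Fg(\bigcup_i\mathcal U_i)$ and is again a uniformity, so it is also their meet in $\OpUnif|A|$; its compatibility is the instance of ``an arbitrary meet of compatible uniformities is compatible'' already noted above. Concretely, a member $M$ of this filter contains some finite intersection $U_{i_1}\cap\dots\cap U_{i_k}$ with $U_{i_j}\in\mathcal U_{i_j}$; choosing $\bar U_{i_j}\in\mathcal U_{i_j}$ with $\omega(\bar U_{i_j})\subseteq U_{i_j}$ (compatibility of $\mathcal U_{i_j}$) and setting $\bar M=\bar U_{i_1}\cap\dots\cap\bar U_{i_k}$, which again lies in the meet, gives $\omega(\bar M)\subseteq\omega(\bar U_{i_j})\subseteq U_{i_j}$ for each $j$, hence $\omega(\bar M)\subseteq M$.

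For the join, write $\mathcal S=\bigcap_i\mathcal U_i$, a semiuniformity, so that $\bigvee_i\mathcal U_i=\Ug\mathcal S$; set $\mathcal J=\Ug\mathcal S$. Fix an $n$-ary operation $\omega$. By Theorem~\ref{T:WTheorem} it suffices, for each base element $[\,U_m:m\in\mathbb N_+\,]$ of $\mathcal J$, to find $\bar J\in\mathcal J$ with $\omega(\bar J)\subseteq[\,U_m:m\,]$. I would first use $\mathrm{(U_t)}$ (iterated) to get $J_1\in\mathcal J$ with $J_1^{\circ n}\subseteq[\,U_m:m\,]$ and then a base element $[\,W_m:m\,]\subseteq J_1$, so that $[\,W_m:m\,]^{\circ n}\subseteq[\,U_m:m\,]$. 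The second move is a ``change one coordinate at a time'' decomposition: if $\bar J$ relates $\mathbf x$ to $\mathbf y$ coordinatewise, the chain $\omega(\mathbf x),\,\omega(y_1,x_2,\dots,x_n),\,\dots,\,\omega(\mathbf y)$ shows $\omega(\bar J)\subseteq\omega_1(\bar J)\circ\dots\circ\omega_n(\bar J)$, where $\omega_\ell(\bar J)$ denotes the relation obtained by letting the $\ell$-th argument range over $\bar J$ while holding the remaining arguments fixed (at arbitrary values). Hence it is enough to secure $\omega_\ell(\bar J)\subseteq[\,W_m:m\,]$ for every $\ell$.

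This reduces the problem to a one-coordinate compatibility statement for $\mathcal S$, which I expect to be the crux and the reason joins behave better than one might fear: although $\mathcal S$ need not be compatible in the full sense, it is compatible for each single coordinate. Indeed, given $W\in\mathcal S$ and $\ell$, compatibility of each $\mathcal U_i$ yields $V_i\in\mathcal U_i$ with $\omega(V_i)\subseteq W$, hence $\omega_\ell(V_i)\subseteq\omega(V_i)\subseteq W$ since $V_i$ is reflexive. Setting $V=\bigcup_i V_i$ we get $V\supseteq V_j\in\mathcal U_j$ for every $j$, so $V\in\mathcal U_j$ (a filter) for every $j$, whence $V\in\mathcal S$. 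Because $\omega_\ell$ varies only one argument, any pair in $\omega_\ell(V)$ uses a single pair from $\bigcup_i V_i$, lying in one $V_{i_0}$, so $\omega_\ell(V)\subseteq\bigcup_i\omega_\ell(V_i)\subseteq W$. For operations of arity $\geq 2$ this union trick breaks down, since the several coordinates may be supplied by different members $V_i$; that failure is exactly what forces the one-coordinate reduction above, with transitivity of $\mathcal J$ recovering the full operation.

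To finish, I would apply the one-coordinate statement to each $W_m$ and each $\ell$ and intersect over the finitely many $\ell$ to obtain $V_m\in\mathcal S$ with $\omega_\ell(V_m)\subseteq W_m$ for all $\ell$. Then $\bar J=[\,V_m:m\,]\in\mathcal J$, and pushing $\omega_\ell$ through a single composition chain gives $\omega_\ell(\bar J)\subseteq[\,\omega_\ell(V_m):m\,]\subseteq[\,W_m:m\,]$; therefore $\omega(\bar J)\subseteq\omega_1(\bar J)\circ\dots\circ\omega_n(\bar J)\subseteq[\,W_m:m\,]^{\circ n}\subseteq[\,U_m:m\,]$, proving $\mathcal J$ compatible. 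The only mildly technical points remaining are the inclusion $\omega_\ell([\,V_m:m\,])\subseteq[\,\omega_\ell(V_m):m\,]$ and the monotonicity of the bracket in its relations, both of which are immediate from the definitions.
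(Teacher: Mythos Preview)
Your proof is correct. The paper itself does not give a proof, only citing \cite[Theorem~5.3]{r02} and \cite[Corollary~2.4]{weber}; your argument is essentially the one behind the Weber reference, built on the bracket formula of Theorem~\ref{T:WTheorem}. The crucial observation you isolate---that $\mathcal S=\bigcap_i\mathcal U_i$ is \emph{singly} compatible (i.e., compatible when only one argument slot is varied) even though it need not be fully compatible, and that transitivity of $\Ug\mathcal S$ lets one recover full compatibility via the one-coordinate-at-a-time decomposition---is exactly the point. Your union trick $V=\bigcup_i V_i$ for single-slot compatibility, and the verification that $\omega_\ell$ commutes with both unions and compositions (which fails for the full $\omega$ when the arity exceeds~$1$), are the right technical moves.

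It is worth noting that the paper's own later Example (showing that $\Ug^{|A|}\mathcal S$ is compatible when $\mathcal S$ is a \emph{compatible} semiuniformity) is a related but strictly easier statement: there $\mathcal S$ is assumed fully compatible, so one can work with $\omega$ directly rather than reducing to $\omega_\ell$. Your argument handles the harder situation where $\mathcal S=\bigcap_i\mathcal U_i$ is only singly compatible, which is what is actually needed for the join of compatible uniformities.
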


\begin{proof} See \cite[Theorem~5.3]{r02} (for the join; we discussed the meet above) or \cite[Corollary~2.4]{weber}.
\end{proof}

Thus, $\OpUnif A$ is a complete sublattice of $\OpUnif|A|$.

As we mentioned previously, $\OpSemiUnif A$ is also a complete lattice.

\subsection{Examples of compatible uniformities}

\begin{example}
Suppose $\pair A{\mathcal T}$ is a topological algebra, which as a topological space is compact and hausdorff. We assume that $\mathcal T$ is compatible with the operations of $A$. The unique uniformity $\mathcal U(\mathcal T)$ giving rise to $\mathcal T$ (Example~\ref{E:CHSpace}) is compatible.

\begin{proof} Continuous operations are uniformly continuous on compact sets.
\end{proof}
\end{example}

\begin{example} Suppose
$\mathcal R=\Fg\{\,\alpha\,\}$ where $\alpha\in\Con A$. Then
$\Ug^A\mathcal R=\Fg^{\Rel A}\{\,\alpha\,\}$.
\end{example}

\begin{comment}
More generally, we can consider $\Ug\{\,\rho\,\}$ where
$\rho\in\Rel A$. However, we have

[here: note $\Ug$ is smallest \emph{compatible}
uniformity. If we are talking about $\Ug^{|A|}\rho$, then
yes, indeed $\Ug\{\,\rho\,\}$ is the same as
$\Ug$ of the equivalence relation generated by $\rho$]

Recall $\Cg\rho$ is the congruence \emph{generated by} $\rho$.
\begin{theorem} $\Ug\{\,\rho\,\}=\Ug\{\,\Cg\rho\,\}$, and
is compatible if $\rho$ is.
\end{theorem}

\begin{proof}
It suffices to show that
$\Ug\{\,\Cg\rho\,\}\subseteq\Ug\rho$, or in other words
that if $U\in\Ug\{\,\rho\,\}$, then
$\Cg\{\,\rho\,\}\subseteq U$.

Let $U\in\Ug\{\,\rho\,\}$. We have $\rho\subseteq U$, so
$\rho\cup\Delta\subseteq U$ by (U3). Thus, we can reduce
to the case where $\rho$ is reflexive by replacing $\rho$
with $\rho\cup\Delta$.

If $U\in\Ug\{\,\rho\,\}$, then $\rho\subseteq U^{-1}$, so
$\rho^{-1}\subseteq U$. Thus, we can further reduce to
the case where $\rho$ is symmetric, by replacing $\rho$
by $\rho\cup\rho^{-1}$.

Finally, if $U\in\Ug\{\,\rho\,\}$, then $\rho\subseteq
{^nU}$ for all $n\in\mathbb N$, which implies
$\rho\subseteq U$, showing that
$\Cg\rho=\bigcup_n\rho^n\subseteq U$.

As regards compatibility, it is easy to prove that if
$\rho$ is compatible, then so is $\Cg\rho$. It is obvious
that $\Ug\{\,\alpha\,\}$ is compatible if $\alpha$ is a
congruence.
\end{proof}
\end{comment}

\begin{example} More generally, suppose $\mathcal R=\mathcal F$, where
$\mathcal F$ is a filter in $\Con A$. In this case, $\Ug\mathcal
R=\mathcal F$. We call uniformities of this form
\emph{congruential uniformities}.
\end{example}

\begin{example} Let $A$ be an algebra and $\mathcal S$ a compatible semiuniformity. Then $\Ug^{|A|}\mathcal S$ is a compatible uniformity.
\begin{proof}
Theorem~\ref{E:CHSpace} gives a formula for $\Ug\mathcal S$. Given a sequence $\{\,U_n\,\}_{n\in\mathbb N_+}$ with $U_n\in\mathcal S$ for all $n$, and a basic operation $\omega$, there exists a $V_n\in\mathcal S$ for each $n$ such that $\omega(V_N)\subseteq U_n$. We have
\[[\,V _n\,:\,n\in\mathbb N_+\,])\subseteq[\,U_n\,:\,n\in\mathbb N_+\,],\]
because for all $n$,
\[\omega\left(\bigcup_{\gamma\in\Gamma_n}V_{\gamma(1)}\circ\ldots\circ V_{\gamma(n)}\right)
=\bigcup_{\gamma\in\Gamma_n}\omega\left(V_{\gamma(1)}\right)\circ\ldots\circ\omega\left(V_{\gamma(n)}\right)
\subseteq
\bigcup_{\gamma\in\Gamma_n}U_{\gamma(1)}\circ\ldots\circ U_{\gamma(n)}.\]
\end{proof}
\end{example}

\begin{example}
If $\mathcal U\in\OpUnif A$, then $\bigcap\mathcal
U\in\Con A$. We may consider this as
a mapping from $\OpUnif A$ to $\OpUnif A$, where we map
$\mathcal U$ to $\Fg\{\,\bigcap\mathcal U\,\}$; more
generally, we can map $\mathcal U$ to the filter of
$\kappa$-fold intersections of relations in $\mathcal U$,
for $\kappa$ some given infinite cardinal. The result
will be a compatible uniformity $\mathcal V$ such that
$\mathcal V$ admits $\kappa$-fold intersections of its
elements. We say that $\mathcal V$ \emph{satisfies the
$\kappa$-fold intersection property}.
\end{example}

\subsection{Uniformities and
homomorphisms}

\begin{theorem} Let $A$, $B$ be algebras, and $f:B\to
A$ a homomorphism. We have
\begin{enumerate}
\item[(1)] If $\mathcal U$ is a compatible uniformity
(compatible semiuniformity) on
$A$, then
$f^{-1}(\mathcal U)$ is a compatible uniformity
(respectively, compatible semiuniformity) on
$B$.
\item[(2)] The mapping $\mathcal U\mapsto f^{-1}(\mathcal
U)$ preserves arbitrary meets.
\end{enumerate}
\end{theorem}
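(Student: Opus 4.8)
The plan is to deduce (1) from the filter-base characterization of Theorem~\ref{T:UB} applied to the base $\{\,f^{-1}(R)\mid R\in\mathcal U\,\}$ of $f^{-1}(\mathcal U)$, and to obtain (2) from the fact that the preimage map is a right adjoint together with the fact that meets of compatible uniformities are computed in $\Fil\Rel$. First I would observe that the relations $f^{-1}(R)$, $R\in\mathcal U$, form a filter base closed under finite intersection, since $f^{-1}(R)\cap f^{-1}(R')=f^{-1}(R\cap R')$ with $R\cap R'\in\mathcal U$; hence every member of $f^{-1}(\mathcal U)$ contains a single such $f^{-1}(R)$. The three base conditions are then routine: $(\mathrm{BU_r})$ holds because $\Delta_A\subseteq R$ forces $\Delta_B\subseteq f^{-1}(R)$; $(\mathrm{BU_s})$ holds because preimage commutes with the opposite, $f^{-1}(R)^{-1}=f^{-1}(R^{-1})$, and $R^{-1}\in\mathcal U$ by $(\mathrm{U_s})$; and $(\mathrm{BU_t})$ holds because, choosing $R'\in\mathcal U$ with ${R'}^{\circ 2}\subseteq R$, one has $f^{-1}(R')^{\circ 2}\subseteq f^{-1}({R'}^{\circ 2})\subseteq f^{-1}(R)$. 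For the semiuniformity case one simply omits $(\mathrm{BU_t})$.

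Compatibility is where the hypothesis that $f$ is a homomorphism enters, and it suffices to verify it on the base. Given $R\in\mathcal U$ and a basic operation $\omega$, compatibility of $\mathcal U$ supplies $\bar U\in\mathcal U$ with $\omega(\bar U)\subseteq R$; I would then take $f^{-1}(\bar U)$ as the witness for $f^{-1}(R)$. Indeed, if $x_i\mathrel{f^{-1}(\bar U)}y_i$ for all $i$, then $f(x_i)\mathrel{\bar U}f(y_i)$, so $\omega(f(\mathbf x))\mathrel R\omega(f(\mathbf y))$; since $f$ is a homomorphism, $f(\omega(\mathbf x))=\omega(f(\mathbf x))$ and likewise for $\mathbf y$, whence $\omega(\mathbf x)\mathrel{f^{-1}(R)}\omega(\mathbf y)$, giving $\omega(f^{-1}(\bar U))\subseteq f^{-1}(R)$.

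For (2), the clean route is adjointness. The equivalence defining uniform maps, $f(\mathcal F)\leq\mathcal G$ iff $\mathcal F\leq f^{-1}(\mathcal G)$ (for $\mathcal F\in\Fil\Rel B$, $\mathcal G\in\Fil\Rel A$), exhibits $f^{-1}\colon\Fil\Rel A\to\Fil\Rel B$ as the right adjoint of the image map, so by the general theory it preserves every meet that exists in $\Fil\Rel A$. Since the meet of a family of compatible uniformities on $A$ agrees with its meet in $\Fil\Rel A$, and, by part~(1), the images $f^{-1}(\mathcal U_i)$ are again compatible uniformities whose meet in $\OpUnif B$ agrees with their meet in $\Fil\Rel B$, the preservation descends to give $f^{-1}(\bigwedge_i\mathcal U_i)=\bigwedge_i f^{-1}(\mathcal U_i)$. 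If a self-contained verification is preferred, the same identity follows elementarily from $\bigwedge_iF_i=\Fg(\bigcup_iF_i)$ together with the commutation of preimage with finite intersection: any generator $f^{-1}(R)$ of the left-hand filter has $R$ above some finite intersection $R_1\cap\cdots\cap R_n$ of relations drawn from the $\mathcal U_i$, whence $f^{-1}(R)\supseteq f^{-1}(R_1)\cap\cdots\cap f^{-1}(R_n)$ lies in the right-hand filter, and conversely every generator of the right-hand filter is a generator of the left.

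The verifications in (1) are all routine; the only steps needing a moment's care are the inclusion $f^{-1}(R')^{\circ 2}\subseteq f^{-1}({R'}^{\circ 2})$ (a factoring-through-a-midpoint argument using that $f$ respects composition of relations) and, in (2), keeping the reverse-inclusion ordering straight so that $f^{-1}$ is correctly identified as meet-preserving rather than join-preserving. I expect no real obstacle beyond this bookkeeping.
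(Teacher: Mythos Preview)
Your argument is correct. The paper actually states this theorem without proof, treating it as routine; your write-up fills in exactly the verifications one would expect, using the filter-base criterion (Theorem~\ref{T:UB}) and the Galois-connection framework the paper sets up in the preliminaries, so there is nothing to compare against and nothing to correct.

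One small remark on presentation: the paper's own notation for the adjunction on $\Fil\Rel$ (in the subsection on the categories $\Unif$ and $\SemiUnif$) appears to contain a typo in the order of the pair, but the equivalence $f(\mathcal U)\leq\mathcal V\iff\mathcal U\leq f^{-1}(\mathcal V)$ stated in the definition of uniform map confirms that $f^{-1}$ is indeed the right adjoint, exactly as you use it.
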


\begin{theorem} Let $A$ and $B$ be algebras, and
$f:A\to B$ a homomorphism. Then the mapping $\mathcal
U\mapsto \Ug(f(\mathcal U))$ preserves arbitrary joins.
\end{theorem}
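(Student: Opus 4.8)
The plan is to realise $\Phi:\OpUnif A\to\OpUnif B$, $\Phi(\mathcal U)=\Ug^B(f(\mathcal U))$, as the left adjoint of a monotone Galois connection. Since a left adjoint preserves every join that exists in its domain, and $\OpUnif A$ is a complete lattice, this will give the theorem immediately. The two tools I would assemble are the connection $\pair{[\mathcal F\mapsto\Ug^B\mathcal F]}{[\mathcal V\mapsto\mathcal V]}:\Fil\Rel B\rightharpoonup\OpUnif B$, expressing that $\Ug^B$ is left adjoint to the inclusion, and the forward-image connection $\pair{[\mathcal F\mapsto f(\mathcal F)]}{[\mathcal G\mapsto f^{-1}(\mathcal G)]}:\Fil\Rel A\rightharpoonup\Fil\Rel B$. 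For the latter I would start from the pointwise adjunction $f(R)\subseteq G$ iff $R\subseteq f^{-1}(G)$ and lift it to filters in the reverse-inclusion order: using that $\mathcal U$ is closed under finite meets, $f(\mathcal U)\leq\mathcal V$ means exactly that each $G\in\mathcal V$ contains some $f(R)$ with $R\in\mathcal U$; by the pointwise adjunction and the upward-closure of $\mathcal U$ this is equivalent to $f^{-1}(G)\in\mathcal U$ for all $G\in\mathcal V$, that is, to $\mathcal U\leq f^{-1}(\mathcal V)$.

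Before chaining these I would check that the prospective right adjoint is well defined: by part~(1) of the preceding theorem (stated there for a homomorphism $B\to A$, hence applied to $f$ with the roles of $A$ and $B$ interchanged), $f^{-1}(\mathcal V)\in\OpUnif A$ whenever $\mathcal V\in\OpUnif B$, so $\mathcal V\mapsto f^{-1}(\mathcal V)$ is a monotone map $\OpUnif B\to\OpUnif A$; and $\Ug^B(f(\mathcal U))$ is compatible, being a meet of compatible uniformities. The adjunction then follows by concatenation: for $\mathcal U\in\OpUnif A$ and $\mathcal V\in\OpUnif B$,
\[\Ug^B(f(\mathcal U))\leq\mathcal V\iff f(\mathcal U)\leq\mathcal V\iff\mathcal U\leq f^{-1}(\mathcal V),\]
where the first equivalence is the connection $\pair{\Ug^B}{\text{incl}}$ (with $\mathcal V$ read as a filter) and the second is the forward-image connection. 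Because $\OpUnif A$ and $\OpUnif B$ carry the orders inherited from $\Fil\Rel A$ and $\Fil\Rel B$, these equivalences are genuinely statements in the lattices of compatible uniformities, and they say precisely that $\Phi$ is left adjoint to $\mathcal V\mapsto f^{-1}(\mathcal V)$.

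The subtlety I would be most careful about --- and the reason I would avoid the slogan ``a composite of left adjoints'' --- is that the inclusion $\OpUnif A\hookrightarrow\Fil\Rel A$ is a right adjoint, not a left one, so it does not preserve joins: a join in $\OpUnif A$ is $\Ug^A$ of the intersection, while in $\Fil\Rel A$ it is the bare intersection. Factoring $\Phi$ through that inclusion would therefore not display it as a composite of join-preserving maps. The direct computation of the adjunction avoids the issue entirely, since the right adjoint $\mathcal V\mapsto f^{-1}(\mathcal V)$ silently reconciles the two notions of join; the only real work is the order-bookkeeping involved in transporting $f\dashv f^{-1}$ to the reverse-inclusion order on filters.
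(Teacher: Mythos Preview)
Your proof is correct. The paper states this theorem without proof, evidently regarding it as a consequence of the monotone Galois connection machinery set up in the preliminaries (where it is noted that left adjoints preserve all joins), together with the preceding theorem that $f^{-1}$ carries compatible uniformities to compatible uniformities. Your argument makes this explicit: you exhibit $\mathcal U\mapsto\Ug^B(f(\mathcal U))$ as left adjoint to $\mathcal V\mapsto f^{-1}(\mathcal V)$ between $\OpUnif A$ and $\OpUnif B$, and your care about why one cannot simply say ``composite of left adjoints'' (because the inclusion $\OpUnif A\hookrightarrow\Fil\Rel A$ is a right adjoint) is well placed.
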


\begin{remark} In \cite[Section 11]{r02}, there is an
incorrect statement about the procedure for finding the
colimit of a diagram $F:\mathcal D\to\mathbf V[\Unif]$, in
the category
$\mathbf V[\Unif]$ of algebra objects in the category $\Unif$ and satisfying the identities of $\mathbf V$. The uniformity of the colimit
is the smallest
\emph{compatible} uniformity greater than or equal to all
of the $\iota_d(\mathcal U(d))$, where $\iota_d$ is the
insertion of $F(d)$ into the colimit and $\mathcal U(d)$
is the uniformity on $F(d)$.
\end{remark}

\subsection{Compatible uniformities on algebras in
congruence-permutable algebras}

We recall (\cite[Theorems~6.4 and 6.2]{r02}) that if $A$
is an algebra
in a congruence-permutable variety, and $\mathcal U$,
$\mathcal V\in\OpUnif A$, then $\mathcal U$ and $\mathcal
V$ permute, and
that
$\OpUnif A$ is modular.

\subsection{Compatible uniformities on algebras in congruence-modular varieties}

Contrary to what one might (and indeed, we did) conjecture from Day's Theorem\cite{Day}, and the success in generalizing Mal'tsev's Theorem to congruence-permutable algebras with a compatible uniformity, an algebra $A$ in a congruence-modular variety can have a lattice $\OpUnif A$ of compatible uniformities that is not modular.  Indeed:

\begin{example}[{\cite[Example 2.4]{weber}}]\label{E:UnifNonMod}
Let $C$ be an infinite chain, i.e., an infinite lattice which is totally ordered. Then $\OpUnif C$ is not modular. For details, we refer the reader to the discussion of this example in \cite{weber}. Note that the author gives lattices of uniformities the reverse ordering of the one we use, thus, interchanging the meet (inf) and join (sup) operations.
\end{example}

\section{Superequivalences}
\label{S:SuperEquivalences}

\begin{notation} If $\mathcal I\in\Idl\Rel S$ for some set $S$, then $\mathcal I^{\mathrm{op}}$ will denote the ideal $\{\,\mathcal R^{\mathrm{op}}\mid R\in\mathcal I\,\}$. If $\mathcal I$, $\mathcal I'\in\Idl\Rel S$, then $\mathcal I\circ\mathcal I'$ will denote the ideal $\Ig\{\,R\circ R'\mid R\in\mathcal I,R'\in\mathcal I'\,\}$.
\end{notation}

\begin{definition}
Consider conditions on $\mathcal I\in\Idl\Rel S$:
\begin{enumerate}
\item[$\mathrm{(SE_r)}$] $\Delta_S\in\mathcal I$;
\item[$\mathrm{(SE_s)}$] $\mathcal I=\mathcal I^{\mathrm{op}}$;
\item[$\mathrm{(SE_t)}$] $\mathcal I\circ\mathcal I\leq\mathcal I$,
\end{enumerate}
We say that $\mathcal I$ is \emph{reflexive} if it satisfies condition $\mathrm{(SE_r)}$, \emph{symmetric} if it satisfies condition $\mathrm{(SE_s)}$, \emph{transitive} if it satisfies condition $\mathrm{(SE_t)}$, a \emph{semisuperequivalence} if it satisfies $\mathrm{(SE_r)}$ and $\mathrm{(SE_s)}$, and a \emph{superequivalence} if it satisfies $\mathrm{(SE_r)}$, 
$\mathrm{(SE_s)}$, and
$\mathrm{(SE_t)}$.
\end{definition}

\begin{theorem}
Let $\mathcal I$ be an ideal of relations on a set $S$. The following are equivalent:
\begin{enumerate}
\item $\mathcal I$ is a superequivalence;
\item $\mathcal I$ is generated, as an ideal, by symmetric relations, and closed under composition of relations;
\item $\mathcal I$ is generated, as an ideal, by symmetric relations, and closed under relational powers.
\end{enumerate}
\end{theorem}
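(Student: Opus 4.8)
The plan is to prove the theorem by translating each clause of (2) and (3) into one of the defining conditions $(\mathrm{SE_r})$, $(\mathrm{SE_s})$, $(\mathrm{SE_t})$ through three ``dictionary'' equivalences, and then reading off the result. The one structural fact I would use throughout is that an ideal $\mathcal I$ is a down-set closed under finite joins, so that $\Ig X\subseteq\mathcal I$ holds precisely when $X\subseteq\mathcal I$.

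First I would handle the entry \emph{transitivity $\leftrightarrow$ closure under composition}. By the definition of the composition of ideals, $\mathcal I\circ\mathcal I=\Ig\{\,R\circ R'\mid R,R'\in\mathcal I\,\}$, so $(\mathrm{SE_t})$, namely $\mathcal I\circ\mathcal I\le\mathcal I$, holds iff $\{\,R\circ R'\mid R,R'\in\mathcal I\,\}\subseteq\mathcal I$, i.e. iff $R\circ R'\in\mathcal I$ whenever $R,R'\in\mathcal I$. Thus $(\mathrm{SE_t})$ \emph{is} closure under composition, with no computation beyond unwinding the definition of $\circ$ on ideals; this is the substance of the $(1)\Leftrightarrow(2)$ comparison.

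Next I would record the entry \emph{composition $\leftrightarrow$ powers}. If $\mathcal I$ is closed under composition, an immediate induction using $R^{\circ n}=R^{\circ(n-1)}\circ R$ gives $R^{\circ n}\in\mathcal I$ for all $n\ge1$. Conversely, given closure under powers and $R,R'\in\mathcal I$, I would set $V=R\cup R'$; then $V\in\mathcal I$ since $\mathcal I$ is an ideal, hence $V^{\circ2}\in\mathcal I$, and because $R\circ R'\subseteq V^{\circ2}$ and $\mathcal I$ is a down-set we get $R\circ R'\in\mathcal I$. Neither direction uses symmetry or reflexivity. For the entry \emph{symmetry $\leftrightarrow$ generated by symmetric relations}: if $\mathcal I=\Ig B$ with each $b\in B$ symmetric, then any $R\in\mathcal I$ lies below some $b_1\cup\dots\cup b_n$, whence $R^{\mathrm{op}}\subseteq b_1\cup\dots\cup b_n\in\mathcal I$ and $R^{\mathrm{op}}\in\mathcal I$, giving $(\mathrm{SE_s})$; conversely, if $\mathcal I=\mathcal I^{\mathrm{op}}$ then the symmetric relations $R\cup R^{\mathrm{op}}$ ($R\in\mathcal I$) all lie in $\mathcal I$ and generate it, since $R\subseteq R\cup R^{\mathrm{op}}$.

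Assembling these, condition (1), which is $(\mathrm{SE_r})\wedge(\mathrm{SE_s})\wedge(\mathrm{SE_t})$, becomes $(\mathrm{SE_r})$ together with ``generated by symmetric relations'' and ``closed under composition'' (or, via the power entry, ``closed under powers''), which is exactly (2) (resp.\ (3)) up to the reflexivity clause. That clause is the point I expect to require the most care: symmetry plus composition-closure does not by itself force $(\mathrm{SE_r})$, as $\mathcal I=\{\,\varnothing\,\}$ on a nonempty $S$ shows. Matching the literal statement therefore forces the reading in which the generators in (2) and (3) are reflexive as well as symmetric (equivalently, ``relational powers'' is understood to include $R^{\circ0}=\Delta_S$): from a reflexive generator $b\in\mathcal I$ one has $\Delta_S\subseteq b$, so $\Delta_S\in\mathcal I$ by downward closure, supplying $(\mathrm{SE_r})$ and closing the cycle $(1)\Rightarrow(2)\Rightarrow(3)\Rightarrow(1)$.
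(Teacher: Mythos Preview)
The paper states this theorem without proof, so there is no argument to compare against directly. Your decomposition into the three ``dictionary'' equivalences is correct and is the natural way to proceed: $(\mathrm{SE_t})$ unwinds to closure under composition by the definition of $\mathcal I\circ\mathcal I$; closure under composition and closure under powers are equivalent via the $V=R\cup R'$ trick (using only that $\mathcal I$ is an ideal); and $(\mathrm{SE_s})$ is equivalent to being ideal-generated by symmetric relations via $R\mapsto R\cup R^{\mathrm{op}}$.

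You are also right to flag the reflexivity issue: as literally written, (2) and (3) do not force $(\mathrm{SE_r})$, and your counterexample $\mathcal I=\{\varnothing\}$ on a nonempty $S$ is valid. The paper's own parallel theorem for superuniformities (the first theorem of Section~\ref{S:SuperUniformities}) requires the generators to be \emph{semiuniformities}, which are reflexive as well as symmetric; this confirms that the intended reading here is ``reflexive symmetric relations'' as generators, exactly the fix you propose. With that reading, your argument closes cleanly: a reflexive generator $b$ gives $\Delta_S\subseteq b\in\mathcal I$, hence $\Delta_S\in\mathcal I$; and conversely, from $(\mathrm{SE_r})$ and $(\mathrm{SE_s})$ the relations $R\cup R^{\mathrm{op}}\cup\Delta_S$ are reflexive, symmetric, lie in $\mathcal I$, and generate it. Your alternative fix via $R^{\circ 0}=\Delta_S$ repairs (3) but not (2), so the ``reflexive generators'' reading is the one that makes both conditions work.
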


If $f:S\to T$ is a function from $S$ to another set $T$, then there is an inverse image mapping along $f$ from $\Idl\Rel T$ to $\Idl\Rel S$, given by
\[\mathcal I\mapsto f^{-1}(\mathcal I)\overset{\mathrm{def}}=\Ig\{\,f^{-1}(R)\mid R\in\mathcal I\,\},\]
and a direct image mapping along $f$ from $\OpSuperEqv S$ to $\OpSuperEqv T$, given by
\[\mathcal I\mapsto f(\mathcal I)\overset{\textrm{def}}=\{\,f(R)\mid R\in\mathcal I\,\};\]
and we have $\pair{[\mathcal I\mapsto f(\mathcal I)]}{[I'\mapsto f^{-1}(\mathcal I')]}:\Idl\Rel S\rightharpoonup\Idl\Rel T$.

\begin{definition} If $S$ is a set and $\mathcal I$ is a superequivalence (semisuperequivalence) then we say that $\pair S{\mathcal I}$ is a \emph{superequivalence space} (respectively, \emph{semisuperequivalence space}).
\end{definition}

\begin{definition}
If $\pair S{\mathcal I}$ and $\pair T{\mathcal I'}$ are superequivalence (semisuperequivalence) spaces, a function $f:S\to T$ is a \emph{super\-equivalent function} if the equivalent conditions $f(\mathcal I)\leq\mathcal I'$, $\mathcal I\leq f^{-1}(\mathcal I')$ are satisfied, which happens iff $R\in\mathcal I\implies f(R)\in\mathcal I'$.
\end{definition}

Superequivalence spaces, and the superequivalent functions between them, form a category, which we denote by $\SuperEqv$. Similarly, we have the category $\SemiSuperEqv$ of semisuperequivalence spaces.

The category-theoretic product of superequivalence or semisuperequivalence spaces $\pair S{\mathcal I}$ and $\pair T{\mathcal I'}$ is
\[\pair S{\mathcal I}\times\pair T{\mathcal I'}=
\pair{S\times T}{\Ig\{\,R\times\mathcal R'\mid\mathcal R\in\mathcal I,
\mathcal R'\in\mathcal I'\,\}}.\]

\begin{example} Let $S$ be a set, and $\alpha$ an equivalence relation on $S$. Then $\pair S{\Ig\{\,\alpha\,\}}$ is a superequivalence space.
\end{example}

\begin{example} Let $S$ be a set, and $\alpha$ a symmetric relation on $S$. Then $\pair S{\Ig^\circ\{\,\alpha\,\}}$ is a superequivalence space, where $\Ig^\circ\mathcal R$, for a set of relations $\mathcal R$, is the ideal generated by the set of finite compositions of relations in $\mathcal R$.
\end{example}

\begin{example}
\label{E:Xi}
Let $S$ be a set, and let $\mathcal R$ be the set of reflexive, symmetric relations on $S$ that relate only a finite number of off-diagonal pairs $s\neq s'$. Then $\pair S{\Ig^\circ\mathcal R}$ is a superequivalence space, which we denote by $\Xi(S)$.
\end{example}

[add = $\SEg\bigvee^{\Idl\Rel S}\bigcup_i\mathcal I_i$ - make def of $\SEg$]

\subsection{The lattices $\OpSuperEqv S$ and $\OpSemiSuperEqv S$}

We denote the set of superequivalences on a set $S$ by $\OpSuperEqv S$ and the set of semisuperequivalences by $\OpSemiSuperEqv$. They admit complete meet operations given by intersection of ideals, and complete join operations given by 
\[\bigvee_i^{\OpSuperEqv}\mathcal I_i=\Ig^\circ\{\,\bigcup_i\mathcal I_i\,\},\]
and
\[\bigvee_i^{\OpSemiSuperEqv}\mathcal I_i=\Ig\{\,\bigcup_i\mathcal I_i\,\}.\]

\subsection{$\SEg\mathcal I$, the superequivalence generated by an ideal $\mathcal I$}  Since $\SuperEqv S$ is a subset of $\Idl\Rel S$ closed under arbitrary meets, we can define $\SEg\mathcal I$, for any ideal of relations $\mathcal I$, as $\bigwedge_{\mathcal I\leq\mathcal J\in\SuperEqv S}\mathcal J$, and we then have $\pair{[\mathcal I\mapsto\SEg\mathcal I]}{\mathcal J\mapsto\mathcal J]}:\Idl\Rel S\rightharpoonup\SuperEqv S$.

\subsection{Permutability for superequivalences}

\begin{definition}
We say that two superequivalences $\mathcal I$, $\mathcal I'$ on a set $S$ \emph{permute} if
$\mathcal I\circ\mathcal I'=\mathcal I'\circ\mathcal I$.
\end{definition}

\begin{theorem}
Superequivalences $\mathcal I$ and $\mathcal I'$ permute iff $\mathcal I\vee\mathcal I'=\mathcal I\circ\mathcal I'$.
\end{theorem}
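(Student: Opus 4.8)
The plan is to follow the model of the corresponding theorem for uniformities, reading the order on $\Idl\Rel S$ as inclusion (so that reflexivity gives $\mathcal I,\mathcal I'\le\mathcal I\circ\mathcal I'$ via the diagonal factor $\Delta_S$) rather than reverse inclusion. I would prove the two implications separately, relying throughout on the fact that $\circ$ on $\Idl\Rel S$ is associative and monotone and that $\mathcal J\mapsto\mathcal J^{\mathrm{op}}$ is an order automorphism commuting with ideal generation.

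For the direction ($\Leftarrow$), I would assume $\mathcal I\circ\mathcal I'=\mathcal I'\circ\mathcal I$ and show that $\mathcal I\circ\mathcal I'$ is itself a superequivalence. It is an ideal by definition. Condition $\mathrm{(SE_r)}$ is immediate since $\Delta_S=\Delta_S\circ\Delta_S$ with $\Delta_S\in\mathcal I$ and $\Delta_S\in\mathcal I'$. For $\mathrm{(SE_s)}$ I would use the identity $(R\circ R')^{\mathrm{op}}=R'^{\mathrm{op}}\circ R^{\mathrm{op}}$ together with the symmetry of $\mathcal I$ and $\mathcal I'$ to get $(\mathcal I\circ\mathcal I')^{\mathrm{op}}=\mathcal I'\circ\mathcal I$, which equals $\mathcal I\circ\mathcal I'$ exactly by the permutability hypothesis. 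For $\mathrm{(SE_t)}$ I would compute, using associativity and permutability,
\[(\mathcal I\circ\mathcal I')\circ(\mathcal I\circ\mathcal I')=\mathcal I\circ(\mathcal I'\circ\mathcal I)\circ\mathcal I'=\mathcal I\circ(\mathcal I\circ\mathcal I')\circ\mathcal I'=(\mathcal I\circ\mathcal I)\circ(\mathcal I'\circ\mathcal I')\le\mathcal I\circ\mathcal I',\]
the last step by $\mathrm{(SE_t)}$ for each factor. Having established that $\mathcal I\circ\mathcal I'$ is a superequivalence lying above both $\mathcal I$ and $\mathcal I'$, I would conclude $\mathcal I\vee\mathcal I'\le\mathcal I\circ\mathcal I'$. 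The reverse inequality holds with no hypothesis, since each generator $R\circ R'$ with $R\in\mathcal I$, $R'\in\mathcal I'$ is a finite composition of relations from $\mathcal I\cup\mathcal I'$, hence lies in $\Ig^\circ\{\,\mathcal I\cup\mathcal I'\,\}=\mathcal I\vee\mathcal I'$.

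For the direction ($\Rightarrow$), I would assume $\mathcal I\vee\mathcal I'=\mathcal I\circ\mathcal I'$. Since the generators $R'\circ R$ of $\mathcal I'\circ\mathcal I$ are finite compositions of relations from $\mathcal I\cup\mathcal I'$, we have $\mathcal I'\circ\mathcal I\le\mathcal I\vee\mathcal I'=\mathcal I\circ\mathcal I'$. To upgrade this to equality I would reprove the argument of Lemma~\ref{T:Permutability} in the present setting: applying the order automorphism $\mathcal J\mapsto\mathcal J^{\mathrm{op}}$ and using $(R\circ R')^{\mathrm{op}}=R'^{\mathrm{op}}\circ R^{\mathrm{op}}$ with the symmetry of $\mathcal I$ and $\mathcal I'$, the inequality $\mathcal I'\circ\mathcal I\le\mathcal I\circ\mathcal I'$ transforms into $\mathcal I\circ\mathcal I'\le\mathcal I'\circ\mathcal I$; the two together give $\mathcal I\circ\mathcal I'=\mathcal I'\circ\mathcal I$.

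The facts above are routine but load-bearing, and I expect the only genuine care to be needed in checking that identities stated on the generating relations pass correctly to the generated ideals — for instance that $\Ig\{\,R\circ R'\,\}\circ\mathcal I''=\Ig\{\,R\circ R'\circ R''\,\}$ and that $\mathrm{op}$ commutes with $\Ig$. This bookkeeping about how $\circ$ and $\Ig$ interact is where the main, though still elementary, obstacle lies; once it is in place, both implications are short.
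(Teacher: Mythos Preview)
Your proposal is correct and, for the ($\Leftarrow$) direction, follows the paper's argument essentially verbatim (with more explicit verification of the three axioms where the paper simply notes that $\mathcal I\circ\mathcal I'$ is an ideal closed under composition).

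For ($\Rightarrow$) your route is slightly different from the paper's. You first obtain the one-sided inequality $\mathcal I'\circ\mathcal I\le\mathcal I\vee\mathcal I'=\mathcal I\circ\mathcal I'$ and then apply the $\mathrm{op}$ automorphism (in the style of Lemma~\ref{T:Permutability}) to reverse it. The paper instead exploits directly that the join, being a superequivalence, satisfies $\mathrm{(SE_s)}$:
\[
\mathcal I\circ\mathcal I'=\mathcal I\vee\mathcal I'=(\mathcal I\vee\mathcal I')^{\mathrm{op}}=(\mathcal I\circ\mathcal I')^{\mathrm{op}}=\mathcal I'^{\mathrm{op}}\circ\mathcal I^{\mathrm{op}}=\mathcal I'\circ\mathcal I.
\]
Both arguments are equally elementary; the paper's is a one-liner, while yours makes the connection to the uniformity case explicit.
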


\begin{proof}
We have $\mathcal I\circ\mathcal I'\leq\mathcal I\vee\mathcal I'$ for any superequivalences $\mathcal I$ and $\mathcal I'$,
because $\mathcal I\vee\mathcal I'$ contains $\mathcal I$ and $\mathcal I'$ and satisfies $\mathrm{(SU_t)}$.
If $\mathcal I$ and $\mathcal I'$ permute, then $\mathcal I\circ\mathcal I'$ is a superequivalence, being an ideal of relations closed under composition, and so, it is the join. For, $\Ig\{\,\Delta\,\}\leq\mathcal I$, which implies $\mathcal I\circ\mathcal I'\leq\mathcal I'$, and similarly, $\mathcal I\circ\mathcal I'\leq\mathcal I$.

Conversely, if $\mathcal I\vee\mathcal I'=\mathcal I\circ\mathcal I'$, then 
we have
\begin{align*}
\mathcal I\circ\mathcal I'
&=\mathcal I\vee\mathcal I'\\
&=(\mathcal I\vee\mathcal I')^{\mathrm{op}}\\
&=(\mathcal I\circ\mathcal I')^{\mathrm{op}}\\
&=(\mathcal I')^{\mathrm{op}}\circ(\mathcal I)^{\mathrm{op}}\\
&=\mathcal I'\circ\mathcal I.
\end{align*}
\end{proof}

\begin{theorem}
Let $L$ be a sublattice of $\OpSuperEqv S$ for some set $S$. If the elements
of $L$ permute pairwise, then $L$ is modular.
\end{theorem}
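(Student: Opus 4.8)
The plan is to verify the modular law $\mathcal I\le\mathcal K\implies(\mathcal I\vee\mathcal J)\wedge\mathcal K=\mathcal I\vee(\mathcal J\wedge\mathcal K)$ for $\mathcal I,\mathcal J,\mathcal K\in L$, adapting the classical Dedekind argument for permuting congruences but working with whole relations rather than single pairs. The inclusion $\mathcal I\vee(\mathcal J\wedge\mathcal K)\le(\mathcal I\vee\mathcal J)\wedge\mathcal K$ holds in every lattice, so the work is entirely in the reverse inclusion. Because $L$ is a sublattice, its meets and joins agree with those of $\OpSuperEqv S$; in particular $\mathcal J\wedge\mathcal K=\mathcal J\cap\mathcal K\in L$, so by hypothesis $\mathcal I$ permutes both with $\mathcal J$ and with $\mathcal J\wedge\mathcal K$. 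The preceding theorem then lets me rewrite the two relevant joins as compositions: $\mathcal I\vee\mathcal J=\mathcal I\circ\mathcal J$ and $\mathcal I\vee(\mathcal J\wedge\mathcal K)=\mathcal I\circ(\mathcal J\wedge\mathcal K)$.

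Next I would record the elementary fact that a relation $T$ lies in an ideal $\mathcal I\circ\mathcal J$ precisely when $T\subseteq R\circ R'$ for some single $R\in\mathcal I$ and $R'\in\mathcal J$: the generating compositions form a directed set, since $(R_1\circ R_1')\cup(R_2\circ R_2')\subseteq(R_1\cup R_2)\circ(R_1'\cup R_2')$ and ideals are closed under finite unions. Now take $T\in(\mathcal I\circ\mathcal J)\cap\mathcal K$. Fix $R\in\mathcal I$ and $R'\in\mathcal J$ with $T\subseteq R\circ R'$, and set $W=R^{\mathrm{op}}\circ T$. Since $\mathcal I\le\mathcal K$ we have $R\in\mathcal K$, hence $R^{\mathrm{op}}\in\mathcal K$ by symmetry $\mathrm{(SE_s)}$, and $T\in\mathcal K$, so transitivity $\mathrm{(SE_t)}$ gives $W\in\mathcal K$. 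The crucial point is that $W$ is one fixed relation, depending only on $R$ and $T$, not on any individual pair.

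The heart of the argument is then pointwise. Given $\langle x,y\rangle\in T$, choose $z$ with $x\mathrel R z\mathrel{R'}y$. From $\langle z,x\rangle\in R^{\mathrm{op}}$ and $\langle x,y\rangle\in T$ I get $\langle z,y\rangle\in W$, while $\langle z,y\rangle\in R'$ already; thus $\langle z,y\rangle\in R'\cap W$, which lies in $\mathcal J\cap\mathcal K=\mathcal J\wedge\mathcal K$ by downward closure. Together with $\langle x,z\rangle\in R\in\mathcal I$ this shows $\langle x,y\rangle\in R\circ(R'\cap W)$. As the pair was arbitrary, $T\subseteq R\circ(R'\cap W)$, whence $T\in\mathcal I\circ(\mathcal J\wedge\mathcal K)=\mathcal I\vee(\mathcal J\wedge\mathcal K)$, completing the reverse inclusion.

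I expect the only real obstacle to be bookkeeping rather than ideas: one must ensure the ``middle'' relation $R'\cap W$ witnessing membership is independent of the chosen decomposition $x\mathrel R z\mathrel{R'}y$ of each pair, so that a single generator $R\circ(R'\cap W)$ dominates all of $T$ at once. This is exactly what fixing $R$, $R'$, and $W=R^{\mathrm{op}}\circ T$ in advance accomplishes, and it is the step where the ideal-theoretic formulation (choose one bounding composition, then argue pointwise beneath it) cleanly replaces the purely pairwise reasoning of the congruence case.
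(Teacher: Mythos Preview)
Your proof is correct and follows essentially the same Dedekind-style argument as the paper: both rewrite the joins as compositions via permutability, take an element of $(\mathcal I\vee\mathcal J)\wedge\mathcal K$, and bound it by $R\circ(R'\cap(R^{\mathrm{op}}\circ\,\cdot\,))$, using $\mathrm{(SE_s)}$ and $\mathrm{(SE_t)}$ to see that the middle factor lies in $\mathcal J\wedge\mathcal K$. The only cosmetic difference is that the paper works directly with generators $(R\circ R')\cap R''$ and forms $R^{-1}\circ R''$, while you take an arbitrary $T$, fix a bounding $R\circ R'$, and form $R^{\mathrm{op}}\circ T$; these are the same move.
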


\begin{proof}
 It suffices to show, given $\mathcal I$, $\mathcal I'$, $\mathcal I''\in\OpSuperUnif A$ such that $\mathcal I\leq\mathcal I''$, that
\[(\mathcal I\vee\mathcal I')\wedge\mathcal I''\leq\mathcal I\vee(\mathcal I'\wedge\mathcal I''),\]
as the opposite inequality holds in any lattice. The left-hand side is generated by relations $(R\circ R')\cap R''$ such that $R\in\mathcal I$, $R'\in\mathcal I'$, and $R''\in\mathcal I''$, so let $(R\circ R')\cap\mathcal R''$ be such a relation. Consider $R\circ((R'\wedge(R^{-1}\circ R''))\in\mathcal I\circ(\mathcal I'\wedge\mathcal I'')$.  If $a\mathrel{R}b\mathrel{R'}c$ and $a\mathrel{R''}c$, then 
$a\mathrel Rb$ and $b\mathrel{R'\cap(R^{-1}\circ R''))}c$, showing that $(R\circ R')\wedge R''\leq R\circ(R'\wedge(R^{-1}\circ R''))$.
\end{proof}

\begin{lemma}
\label{T:InversePi} If $\pi:A\to B$ is an onto function, and $\{\,\mathcal I_i\,\}_{i\in I}$ is a tuple of superequivalences on $B$, then
\[\pi^{-1}(\bigvee_i\mathcal I_i)=\bigvee_i\pi^{-1}(\mathcal I_i).\]
\end{lemma}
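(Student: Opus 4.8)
The plan is to reduce everything to the elementary behavior of the inverse-image map on relations. Writing $\pi^{-1}(R)=\{\,\pair a{a'}\mid\pair{\pi(a)}{\pi(a')}\in R\,\}$ for a relation $R$ on $B$, I would first record the two one-sided facts about $\pi^{-1}$ and composition: the inclusion $\pi^{-1}(R)\circ\pi^{-1}(R')\subseteq\pi^{-1}(R\circ R')$ holds for an arbitrary $\pi$, whereas the reverse inclusion $\pi^{-1}(R\circ R')\subseteq\pi^{-1}(R)\circ\pi^{-1}(R')$ holds \emph{because} $\pi$ is onto (given $\pair ac$ with $\pair{\pi(a)}{\pi(c)}\in R\circ R'$, pick a witnessing $\beta\in B$ and lift it to $b\in A$ with $\pi(b)=\beta$, so that $a\mathrel{\pi^{-1}(R)}b\mathrel{\pi^{-1}(R')}c$). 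Since $\pi^{-1}$ also commutes with unions, takes $\Delta_B$ to $\pi^{-1}(\Delta_B)\supseteq\Delta_A$, and satisfies $\pi^{-1}(R^{\mathrm{op}})=\pi^{-1}(R)^{\mathrm{op}}$, the always-true inclusion already shows that $\pi^{-1}$ carries any superequivalence on $B$ to a superequivalence on $A$ (transitivity of the image comes from $\pi^{-1}(R)\circ\pi^{-1}(R')\subseteq\pi^{-1}(R\circ R')\in\pi^{-1}(\mathcal I)$, needing no surjectivity).

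For the inclusion $\bigvee_i\pi^{-1}(\mathcal I_i)\leq\pi^{-1}(\bigvee_i\mathcal I_i)$ I would note that this half needs no surjectivity. The map $\pi^{-1}$ is monotone (it is the right adjoint in the Galois connection $\pair{[\mathcal I\mapsto\pi(\mathcal I)]}{[\mathcal I\mapsto\pi^{-1}(\mathcal I)]}$ on ideals of relations), so $\pi^{-1}(\bigvee_j\mathcal I_j)$ dominates every $\pi^{-1}(\mathcal I_i)$; and by the previous paragraph it is a superequivalence on $A$, hence it dominates the join $\bigvee_i\pi^{-1}(\mathcal I_i)$ taken in $\OpSuperEqv A$.

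For the reverse inclusion $\pi^{-1}(\bigvee_i\mathcal I_i)\leq\bigvee_i\pi^{-1}(\mathcal I_i)$ I would argue on generators, and this is where surjectivity is spent. The domain ideal $\pi^{-1}(\bigvee_i\mathcal I_i)=\Ig\{\,\pi^{-1}(R)\mid R\in\bigvee_i\mathcal I_i\,\}$ is generated by the relations $\pi^{-1}(R)$ with $R\in\bigvee_i\mathcal I_i=\Ig^\circ\{\,\bigcup_i\mathcal I_i\,\}$; any such $R$ lies below a finite union of finite composites $R_1\circ\cdots\circ R_n$ whose factors each belong to some $\mathcal I_i$. Applying $\pi^{-1}$, which commutes with unions and, by surjectivity, with these composites, bounds $\pi^{-1}(R)$ by a finite union of composites of relations each lying in some $\pi^{-1}(\mathcal I_i)$. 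Each such composite lies in $\bigvee_i\pi^{-1}(\mathcal I_i)$ by transitivity of that superequivalence, the finite union lies there since it is an ideal, and so $\pi^{-1}(R)$, being below it, lies there as well; as this holds for every generator, the inclusion follows.

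The one genuine obstacle is the single inclusion $\pi^{-1}(R\circ R')\subseteq\pi^{-1}(R)\circ\pi^{-1}(R')$, which can fail without surjectivity because the intermediate point $\beta\in B$ need not have a preimage in $A$. Everything else — monotonicity, preservation of reflexivity, symmetry, transitivity, and unions — goes through for an arbitrary $\pi$, so it is precisely the $\leq$ half of the asserted equality that breaks down when $\pi$ is not onto.
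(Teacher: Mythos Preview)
Your argument is correct and follows essentially the same route as the paper's proof: monotonicity for the $\geq$ direction, and for $\leq$, bounding a generator $R$ by $\pi^{-1}$ of a finite composite $R_1\circ\cdots\circ R_n$ with $R_j\in\mathcal I_{\iota_j}$, then using surjectivity to split $\pi^{-1}(R_1\circ\cdots\circ R_n)=\pi^{-1}(R_1)\circ\cdots\circ\pi^{-1}(R_n)$. Your write-up is in fact a bit more careful than the paper's, explicitly checking that $\pi^{-1}$ lands in superequivalences and noting the finite-union step in unpacking $\Ig^\circ$; the paper passes over both points silently.
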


\begin{proof}
($\geq:$) By monotonicity of inverse image.

($\leq:$) Let $R\in\pi^{-1}\Ig^\circ\{\,\bigcup_i\mathcal I_i\,\}$. Then for some finite tuple $\iota_1$, $\ldots$, $\iota_n$ where each $\iota_j\in I$, we have
\begin{align*}
R&\leq\pi^{-1}(R_1\circ\ldots\circ R_n)\textrm{ where each }R_j\in\mathcal I_{\iota_j}\\
&=\pi^{-1}(R_1)\circ\ldots\circ\pi^{-1}(R_n)\\
&\in\Ig^\circ\{\,\bigcup_i\pi^{-1}(\mathcal I_i)\,\}\\
&=\bigvee_i\pi^{-1}(\mathcal I_i).
\end{align*}
Thus, $R\in\bigvee_i\pi^{-1}(\mathcal I_i)$.
\end{proof}

\begin{theorem} For any set $S$, $\OpSuperEqv S$ and $\OpSemiSuperEqv S$ are algebraic lattices.
\end{theorem}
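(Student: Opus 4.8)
<br>

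The plan is to show that both lattices are algebraic, meaning each is complete and every element is a join of compact elements. Completeness is already established: the excerpt states that $\OpSuperEqv S$ and $\OpSemiSuperEqv S$ admit complete meets (intersection of ideals) and complete joins (given by the displayed formulas using $\Ig^\circ$ and $\Ig$ respectively). So the real content is to exhibit enough compact elements and prove that every superequivalence (semisuperequivalence) is the join of the compact ones below it.

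First I would identify the natural candidates for compact elements. A superequivalence $\mathcal I$ is an ideal in $\Rel S$; the simplest pieces are the principal superequivalences $\SEg\{\,R\,\}$ generated by a single relation $R$, or better, the superequivalences $\SEg\Ig\{\,F\,\}$ generated by a single finite reflexive symmetric relation $F$ (one relating only finitely many off-diagonal pairs, as in Example~\ref{E:Xi}). I expect the compact elements to be exactly the finitely generated superequivalences of this form --- those of the shape $\SEg(\mathcal J)$ where $\mathcal J$ is the ideal generated by a single relation relating finitely many pairs. The key structural fact is that the join in $\OpSuperEqv S$ is computed by $\Ig^\circ$ of a union, so membership of a relation $R$ in $\bigvee_i\mathcal I_i$ means $R$ lies below a \emph{finite} composition $R_1\circ\cdots\circ R_n$ with each $R_j$ drawn from one of the $\mathcal I_i$. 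This finiteness is precisely what makes the generators compact.

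The main steps, in order, would be: (1) recall completeness from the meet/join formulas already given; (2) for each superequivalence $\mathcal I$, write $\mathcal I=\bigvee\{\,\SEg\Ig\{\,F\,\}\mid F \text{ finite reflexive symmetric}, F\in\mathcal I\,\}$, using that $\mathcal I$ is reflexive, symmetric, and that every relation in an ideal is a union of its finite sub-relations, together with $\mathrm{(SE_t)}$ to absorb compositions; (3) prove each such generator $\mathcal G=\SEg\Ig\{\,F\,\}$ is compact, i.e. if $\mathcal G\leq\bigvee_i\mathcal I_i$ then $\mathcal G\leq\bigvee_{i\in I_0}\mathcal I_i$ for some finite $I_0\subseteq I$. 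Since $\mathcal G$ is generated by the single relation $F$ together with its composition powers, it suffices to check that $F$ itself, being below a finite composition $R_1\circ\cdots\circ R_n$ with each $R_j\in\mathcal I_{\iota_j}$, already lies in the join of the finitely many $\mathcal I_{\iota_1},\ldots,\mathcal I_{\iota_n}$. For the semisuperequivalence case the argument is the same but simpler, since the join formula uses plain $\Ig$ (no compositions), so one need not track composition length.

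The hard part, and the step I would watch most carefully, is step (3): verifying that the generator is genuinely compact rather than merely finitely generated as an ideal. The subtlety is that $\SEg$ closes under composition (transitivity), so $\mathcal G$ contains all powers $F^{\circ k}$, and a priori a given power $F^{\circ k}$ appearing in $\bigvee_i\mathcal I_i$ could require generators from infinitely many indices. I would resolve this by observing that the containment $F^{\circ k}\in\bigvee_i\mathcal I_i=\Ig^\circ\{\,\bigcup_i\mathcal I_i\,\}$ only ever needs a single finite composition witness per relation, and that compactness need only be checked on the finitely many generators of $\mathcal G$ as an ideal --- here the finiteness of $F$ (relating only finitely many off-diagonal pairs) is what guarantees that a covering of $F$ by a directed join reduces to a finite subcover. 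I would make this precise by reducing to covering each of the finitely many off-diagonal pairs of $F$, each of which is captured by one finite-composition witness drawn from finitely many $\mathcal I_i$, and then taking the union of these finitely many index sets.
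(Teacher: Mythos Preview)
The paper states this theorem without proof, so there is nothing to compare against directly; let me evaluate your argument on its own merits.

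Your overall strategy is sound, but there is a genuine gap in step~(2), and the worry you raise in step~(3) is misplaced in a way that points to the fix.

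\textbf{The gap in step~(2).} Your proposed compact generators are the $\SEg\Ig\{F\}$ with $F$ reflexive, symmetric, and having only finitely many off-diagonal pairs. But the join of all such generators is exactly the superequivalence $\Xi(S)$ of Example~\ref{E:Xi}: any finite composition or finite union of relations with finitely many off-diagonal pairs again has finitely many off-diagonal pairs, so $\Ig^\circ$ of such relations never produces a relation with infinitely many off-diagonal pairs. Hence for infinite $S$, a superequivalence such as $\Ig\{S\times S\}$ is \emph{not} a join of your proposed compacts. The sentence ``every relation in an ideal is a union of its finite sub-relations'' is true but unhelpful: that union is infinite, and membership in $\bigvee_i\mathcal I_i=\Ig^\circ\{\bigcup_i\mathcal I_i\}$ requires a bound by a \emph{finite} union of finite compositions.

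\textbf{The fix, and why step~(3) was a red herring.} Drop the finiteness restriction entirely: for \emph{any} single relation $R$, the superequivalence $\SEg\Ig\{R\}$ is compact. Indeed, $\SEg\Ig\{R\}\leq\mathcal J$ holds for a superequivalence $\mathcal J$ iff $R\in\mathcal J$, because $\SEg\Ig\{R\}$ is the least superequivalence containing $R$. So if $\SEg\Ig\{R\}\leq\bigvee_i\mathcal I_i$, then $R\in\Ig^\circ\{\bigcup_i\mathcal I_i\}$, meaning $R$ lies below a finite union of finite compositions $R^l_1\circ\cdots\circ R^l_{n_l}$ with each $R^l_k\in\mathcal I_{\iota^l_k}$; the finitely many indices $\iota^l_k$ give your finite $I_0$, and $R\in\bigvee_{i\in I_0}\mathcal I_i$, hence $\SEg\Ig\{R\}\leq\bigvee_{i\in I_0}\mathcal I_i$. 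No finiteness of $R$ is used. Your concern about the powers $F^{\circ k}$ needing separate witnesses dissolves: once $R$ lies in the finite sub-join, that sub-join is itself a superequivalence and therefore already contains every $R^{\circ k}$ and everything below them. With these compacts in hand, step~(2) becomes immediate: $\mathcal I=\bigvee_{R\in\mathcal I}\SEg\Ig\{R\}$, since each $R\in\mathcal I$ lies in the corresponding generator and each generator is below $\mathcal I$.

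An equivalent one-line route, if you prefer: superequivalences on $S$ are exactly the subalgebras of $\Rel S$ equipped with the finitary operations $\cup$, $(-)^{\mathrm{op}}$, $\circ$, the constant $\Delta_S$, and the unary maps $R\mapsto R\cap R_0$ for each fixed $R_0\in\Rel S$; subalgebra lattices of algebras with finitary operations are algebraic. The same works for $\OpSemiSuperEqv S$ by omitting $\circ$.
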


\begin{remark} Algebraicity is a very important property. For lack of algebaicity of $\OpSemiUnif A$ for an algebra $A$ in a congruence-modular variety, we were only able to give a partial proof of modularity of the lattice $\OpUnif A$ in \cite{r02}. (Indeed, see Example~\ref{E:UnifNonMod}.)
\end{remark}

Algebraicity of $\OpSuperEqv S$ plays a crucial role in the proof of the following theorem:

\begin{theorem}\label{T:SECC} The category $\SuperEqv$ is cartesian closed.\end{theorem}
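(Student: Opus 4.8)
The plan is to treat $\SuperEqv$ as a topological category over $\Set$ and then invoke the standard criterion for cartesian closedness of such categories. First I would check that $\SuperEqv$ is topological: the forgetful functor $U\colon\SuperEqv\to\Set$, $\langle S,\mathcal I\rangle\mapsto S$, admits initial lifts --- for a source $(f_i\colon S\to U\langle T_i,\mathcal I_i\rangle)$ the initial superequivalence is $\bigwedge_i f_i^{-1}(\mathcal I_i)$, which exists because $\OpSuperEqv S$ is closed under arbitrary meets --- and final lifts, given by the join $\bigvee_i f_i(\mathcal I_i)$ in $\OpSuperEqv S$. Consequently limits are computed by initial structures (so finite products are the terminal object $\langle\{*\},\Ig\{\,\Delta\,\}\rangle$ together with the binary products displayed above) and colimits by final structures, i.e. by joins in the lattices $\OpSuperEqv S$.

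Next I would apply Herrlich's criterion: a topological category with finite products is cartesian closed iff, for every object $B$, the functor $-\times B$ preserves final epi-sinks (equivalently, preserves colimits). Thus the task reduces to showing that forming the product with a fixed $B=\langle S_B,\mathcal I_B\rangle$ commutes with final structures. Since colimits are joins in $\OpSuperEqv$, and every final epi-sink factors through a quotient by a surjection, the crux is to prove that for a surjection $\pi$ and a family $\{\mathcal I_i\}$ the product superequivalence of $\bigvee_i\mathcal I_i$ agrees with $\bigvee_i$ of the product superequivalences. This is exactly where algebraicity of $\OpSuperEqv$ and Lemma~\ref{T:InversePi} enter: algebraicity lets me reduce both sides to compact (finitely generated) superequivalences, over which all the joins and compositions defining the product structure $\Ig\{\,R\times R'\,\}$ are finitary and hence visibly commute with directed joins, while Lemma~\ref{T:InversePi} supplies the compatibility $\pi^{-1}(\bigvee_i\mathcal I_i)=\bigvee_i\pi^{-1}(\mathcal I_i)$ needed to pass the computation through the quotient.

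Granting the criterion, the right adjoint $(-)^{B}$ is obtained, and I would then identify the exponential explicitly for later use. Because $U\cong\Hom(\mathbf 1,-)$ and $\mathbf 1\times B\cong B$, the underlying set of $C^{B}$ must be the hom-set $\Hom(B,C)$; the exponential superequivalence $\mathcal I_{C^B}$ is the one for which the currying map $f\mapsto[a\mapsto f(a,-)]$ is a bijection $\Hom(A\times B,C)\to\Hom(A,C^{B})$, natural in $A$ and $C$. Well-definedness of currying rests on the observation (made by taking the relation $\Delta_a=\{\,\langle a,a\rangle\,\}\in\mathcal I_A$) that each section $f(a,-)$ is superequivalent $B\to C$, and uncurrying is $\hat g=\mathrm{ev}\circ(g\times 1_B)$ with evaluation $\mathrm{ev}\colon C^{B}\times B\to C$ superequivalent.

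The main obstacle is precisely the commutation of $-\times B$ with colimits, together with the reflexive/transitive closure built into the superequivalence structure: the pushforward of the diagonal on $\Hom(B,C)$ along $\mathrm{ev}$ is an infinite union of members of $\mathcal I_C$, so the structure cannot be controlled relation-by-relation. Algebraicity is what makes this tractable, by replacing $\mathcal I_B$ and $\mathcal I_C$ with their compact approximants, for which these unions become finite; without it --- as the failure of algebraicity for $\OpUnif A$ in Example~\ref{E:UnifNonMod} illustrates for the parallel modularity question --- the argument would break down. Once colimit-preservation is established, the remaining verifications (naturality of the currying bijection, and that $\mathrm{ev}$ is superequivalent) are routine.
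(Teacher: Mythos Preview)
Your approach is sound and takes a genuinely different route from the paper. The paper constructs the exponential directly: it sets the underlying set to be $c^b$ (all set-maps $b\to c$), defines the exponential superequivalence as a join indexed over all morphisms $f\colon \pair a{\mathcal I}\times\pair b{\mathcal I'}\to\pair c{\mathcal I''}$ (with $\pair a{\mathcal I}$ varying), and then verifies by hand that the unit and counit inherited from the $\Set$ adjunction are superequivalent and satisfy the triangle identities. You instead recognise $\SuperEqv$ as a topological construct over $\Set$ and invoke Herrlich's criterion, shifting the burden to showing that each functor $-\times B$ preserves final epi-sinks.

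The two arguments rest on the same pair of ingredients --- algebraicity of $\OpSuperEqv S$ (hence meet-continuity) and Lemma~\ref{T:InversePi} --- and deploy them at structurally the same point: to commute a product superequivalence with a directed join. In the paper this is the step establishing that the counit is superequivalent; in your plan it is the step showing $-\times B$ preserves final structures. Your route is more conceptual, makes transparent why algebraicity is the decisive hypothesis (and why the parallel argument breaks for $\Unif$), and produces the expected carrier $\Hom(B,C)$ for the exponential rather than the paper's $c^b$. The paper's route is self-contained and avoids importing topological-category machinery. One small tightening your sketch needs: the phrase ``every final epi-sink factors through a quotient by a surjection'' is imprecise --- what you actually must check is that the final structure on $|A|\times|B|$ induced by a sink $(e_i\times 1_B)$ equals $\mathcal I_A\times\mathcal I_B$, and that is precisely the meet-continuity computation with $\pi^{-1}$ and $(\pi')^{-1}$ that both proofs perform.
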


\begin{proof} Let the cartesian closedness adjunction
for $\Set$, parameterized by the set $b$, be
\[\alpha^b:\Set(a\times b,c)\cong\Set(a,c^b)\]
for sets $a$ and $c$.
For every
$\pair b{\mathcal I'}$, $\pair c{\mathcal I''}$ in
$\SuperEqv$, we define
$\pair c{\mathcal I''}^{\pair b{\mathcal
I'}}=\pair{c^b}{(\mathcal
I'')^{\mathcal I'}}$
where
\begin{align*}(\mathcal
I'')^{\mathcal I'}=&\SEg\left(\bigvee^{\Idl\Rel c^b}_{f\in\SuperEqv(\pair a{\mathcal I}\times\pair b{\mathcal I'},\pair
c{\mathcal I''})}\alpha^b(f)(\mathcal I)\right),\\
=&\left(\bigvee^{\Idl\Rel c^b}_{f\in\SuperEqv(\pair a{\mathcal I}\times\pair b{\mathcal I'},\pair
c{\mathcal I''})}\SEg\left(\alpha^b(f)(\mathcal I)\right)\right)
\end{align*}
and the adjunction $\alpha^{\pair b{\mathcal I'}}$ by
\[\alpha^{\pair b{\mathcal I'}}(f)=\alpha^b(f).\]
 
Our first task is to show that our formula for the function space
is functorial in $\pair c{\mathcal I''}$. This means
that if
$g:\pair c{\mathcal I''}\to
\pair{c'}{\mathcal I'''}$, we want $g^b\in\SuperEqv(\pair c{\mathcal I''}^{\pair b{\mathcal
I'}},\pair{c'}{\mathcal I'''}^{\pair b{\mathcal
I'}})$, or in other words, that
\[g^b((\mathcal I'')^{\mathcal I'})=g^
b\left(\bigvee^{\Idl\Rel c^b}_{f\in\SuperEqv(\pair a{\mathcal I}\times\pair b{\mathcal I'},\pair
c{\mathcal I''})}\alpha^b(f)(\mathcal I)\right)
\leq(\mathcal
I''')^{\mathcal I'}.\] However, forward image
preserves joins. Thus it suffices to prove that if
$f:\pair a{\mathcal I}\times\pair b{\mathcal
I'}\to\pair c{\mathcal I''}$, then
$g^b(\alpha^b(f)(\mathcal I))\leq\alpha^b(g\circ
f)(\mathcal I)$. This is true because by the
naturality of $\alpha^b$,
$g^b\circ\alpha^b(f)=\alpha^b(g\circ f)$.

The unit natural transformation
$\eta^{\pair b{\mathcal I'}}_{\pair a{\mathcal
I}}:\pair a{\mathcal I}\to(\pair a{\mathcal
I}\times\pair b{\mathcal I'})^{\pair b{\mathcal I'}}$
is just the arrow $\eta^b_a:a\to(a\times b)^b$, i.e., the unit natural transformation for the underlying adjunction in $\Set$, and is
an element of $\SuperEqv(\pair a{\mathcal
I},(\pair a{\mathcal I}\times\pair b{\mathcal
I'})^{\pair b{\mathcal I'}})$ because
$\eta^b_a=\alpha^b(1_{a\times b})=\alpha^b(1_{\pair
a{\mathcal I}\times\pair b{\mathcal I'}})$.

The counit natural transformation $\varepsilon^{\pair
b{\mathcal I'}}_{\pair c{\mathcal I''}}:\pair
c{\mathcal I''}^{\pair b{\mathcal I'}}\times\pair
b{\mathcal I'}\to\pair c{\mathcal I''}$ is the underlying counit arrow
$\varepsilon^b_c:c^b\times b\to c$, and is an element
of $\SuperEqv(\pair c{\mathcal I''}^{\pair
b{\mathcal I'}}\times\pair b{\mathcal I'},\pair
c{\mathcal I''})$. For, if
 we take the product of the
arrow $\alpha^b(f)$ and $1_b$, and then compose the
resulting element of $\Set(a\times b,c^b\times
b)$ with $\varepsilon^b_c$, we get back $f$, showing
that for any single $f\in\SuperEqv(\pair
a{\mathcal I}^{\pair b{\mathcal I'}},\pair c{\mathcal
I''})$, $\varepsilon^b_c(\Eqv(\alpha^b(f)(\mathcal
I)))\leq\Eqv(\varepsilon^b_c(\alpha^b(f)(\mathcal
I''))=\Eqv(f(\mathcal I))\leq\mathcal I''$.

The set of $\alpha^b(f)(\mathcal I)$ is directed, because given
$f:\pair a{\mathcal I}\times\pair b{\mathcal
I'}\to\pair c{\mathcal I''}$ and $\hat
f:\pair {\hat a}{\hat{\mathcal I}}\times\pair
b{\mathcal I'}\to\pair c{\mathcal I''}$, we have
\[\alpha^b(f)\cup\alpha^b(\hat f)\in\OpSuperEqv(
\pair a{\mathcal I}{\textstyle\coprod}\pair{\hat
a}{\hat{\mathcal I}},\pair{c^b}{(\mathcal
I'')^{\mathcal I'}}),\]
where $\coprod$ denotes the coproduct.
If $\iota_{\pair a{\mathcal I}}$ and
$\iota_{\pair{\hat a}{\hat{\mathcal I}}}$ are the
insertions of $\pair a{\mathcal I}$ and $\pair{\hat
a}{\hat{\mathcal I}}$ into the coproduct, then the
superequivalence $\mathcal I\coprod\hat{\mathcal
I}$ of the coproduct is $\SEg\left(\iota_{\pair a{\mathcal
I}(\mathcal I)}\vee\iota_{\pair{\hat a}{\hat{\mathcal I}}}(\hat{\mathcal I})\right)$, and
we have 
\[\alpha^b(f)(\mathcal I)\vee\alpha^b(\hat
f)(\hat{\mathcal I})\leq(\alpha^b(f)\cup\alpha^b(\hat
f))(\mathcal I{\textstyle \coprod}\hat{\mathcal I}).\]
Since $\SEg$ is a monotone operation, the set of $\SEg(\alpha^b(f)(\mathcal I)$ is also directed.
From
Lemma~\ref{T:InversePi} and the fact that
$\Idl\Rel(c^b\times b)$ is algebraic and thus
meet-continuous, we have
\begin{align*}(\mathcal
I'')^{\mathcal I'}\times\mathcal
I'
&=\pi^{-1}\left(\bigvee_f\SEg\left(\alpha^b(f)(\mathcal
I)\right)\right)\wedge(\pi')^{-1}(\mathcal I')\\
&=\left(\bigvee_f\pi^{-1}(\alpha^b(f)(\mathcal
I))\right)\wedge(\pi')^{-1}(\mathcal I')\\
&=\bigvee_f\left(\pi^{-1}\left(\SEg\left(\alpha^b(f)(\mathcal
I)\right)\right)\wedge(\pi')^{-1}(\mathcal I')\right);
\end{align*}
and it follows that $\varepsilon^{\pair b{\mathcal
I'}}_{\pair c{\mathcal I''}}$ is a superequivalent function, as stated.

The unit $\eta^{\pair b{\mathcal
I'}}$ and counit $\varepsilon^{\pair
b{\mathcal I'}}$ inherit
naturality and the equations
\[\left(\varepsilon^{\pair b{\mathcal I'}}_{\pair
a{\mathcal I}}\right)^{\pair b{\mathcal
I'}}\circ\eta^{\pair b{\mathcal I'}}_{\pair a{\mathcal
I}^{\pair b{\mathcal I'}}}=1_{\pair a{\mathcal
I}^{\pair b{\mathcal I'}}},\qquad
\varepsilon^{\pair b{\mathcal I'}}_{\pair a{\mathcal
I}\times\pair b{\mathcal I'}}\circ\left(\eta^{\pair
b{\mathcal I'}}_{\pair a{\mathcal I}}\times\pair
b{\mathcal I'}\right)=1_{\pair a{\mathcal I}\times\pair
b{\mathcal I'}}
\] (\cite[Equations IV.1(8)]{macl}), sufficient to
demonstrate the adjunction $\alpha^{\pair b{\mathcal
I'}}$ by
\cite[Theorem IV.1.2]{macl}, from the corresponding equations for the unit $\eta^b$
and counit
$\varepsilon^b$ of
$\alpha^b$.
\end{proof}

\subsection{Compatible superequivalences}

\begin{definition}
A superequivalence $\mathcal I$ on an algebra $A$ is \emph{compatible} if for every $n$-ary basic operation $\omega$, for every $n$, we have
$\omega(R_1,\ldots,R_n)\overset{\textrm{def}}=\{\,\pair{\omega(a_1,\ldots,a_n)}{\omega(b_1,\ldots,b_n)}\mid \pair{a_i}{b_i}\in R_i\text{ for all $i$}\,\}\in\mathcal I$ for all $R_1$, $\ldots$, $R_n\in\mathcal I$.
\end{definition}

\begin{remark} In other words, a superequivalence $\mathcal I$ on $A$ is compatible with an $n$-ary operation $w$ iff $w$ is a superequivalent function from $\pair A{\mathcal I}^n$ to $\pair A{\mathcal I}$.
\end{remark}

\begin{notation}
We denote the set of compatible superequivalences (semisuperequivalences) on an algebra $A$ by $\OpSuperEqv A$ (respectively, $\OpSemiSuperEqv A$).
\end{notation}

\begin{notation}[Continuation of Example~\ref{E:Xi}]
If $R$ is a ring, then $\Xi(A)\notin\SuperEqv A$.
\end{notation}

\begin{example}\label{E:SEFgCongruences}
If $A$ is an algebra, the set of congruences on $A$ which are finitely generated (i.e., which as equivalence relations, are generated by a finite number of pairs of elements) is a directed set, and thus a ceiling of a compatible superequivalence on $A$.
\begin{proof}
The set of such congruences is directed because if $S_1$ and $S_2$ are finite sets of pairs, then \[\Cg S_1\circ\Cg S_2\subseteq Cg S_1\vee\Cg S_2\subseteq \Cg(S_1\cup S_2).\]
\end{proof}
\end{example}

\begin{theorem} The meet and join of tuples of superequivalences compatible with an operation $w$ are compatible with $w$.
\end{theorem}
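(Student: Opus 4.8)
The meet presents no difficulty, so I would dispose of it first. Since $\bigwedge_i\mathcal I_i=\bigcap_i\mathcal I_i$, any relations $R_1,\dots,R_n$ lying in the meet lie in each $\mathcal I_i$, and compatibility of each $\mathcal I_i$ with $w$ gives $w(R_1,\dots,R_n)\in\mathcal I_i$ for every $i$, whence $w(R_1,\dots,R_n)\in\bigcap_i\mathcal I_i$. The join is where the work lies, and the plan is to exploit the description $\bigvee_i\mathcal I_i=\Ig^\circ\{\,\bigcup_i\mathcal I_i\,\}$ as the ideal generated by finite compositions of members of $\bigcup_i\mathcal I_i$, together with two elementary inclusions for $w$ that both follow from a coordinatewise chain argument in the underlying set.

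The first inclusion is a ``one variable at a time'' decomposition. Using that $\Delta$ is reflexive and belongs to every $\mathcal I_i$, I would show that for any relations $S_1,\dots,S_n$,
\[
w(S_1,\dots,S_n)\subseteq w(S_1,\Delta,\dots,\Delta)\circ w(\Delta,S_2,\Delta,\dots,\Delta)\circ\cdots\circ w(\Delta,\dots,\Delta,S_n),
\]
by passing along the chain $w(a_1,\dots,a_n),\,w(b_1,a_2,\dots,a_n),\,\dots,\,w(b_1,\dots,b_n)$ whenever $a_j\mathrel{S_j}b_j$ for all $j$, changing a single coordinate at each step. The point of this inclusion is localization: if each $S_j$ lies in some $\mathcal I_{i_j}$, then since $\Delta\in\mathcal I_{i_j}$ and $\mathcal I_{i_j}$ is compatible with $w$, the $j$-th factor $w(\Delta,\dots,S_j,\dots,\Delta)$ lies in $\mathcal I_{i_j}\subseteq\bigcup_i\mathcal I_i$, so $w(S_1,\dots,S_n)$ is dominated by a finite composition of members of $\bigcup_i\mathcal I_i$ and hence lies in $\bigvee_i\mathcal I_i$.

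It remains to pass from single generators to arbitrary elements of the join. A general $T_j\in\bigvee_i\mathcal I_i$ is dominated by a finite composition $S_{j,1}\circ\cdots\circ S_{j,m}$ of relations from $\bigcup_i\mathcal I_i$ (padding with $\Delta$ to a common length $m$), and the second inclusion, proved by the same chain argument applied to the composition structure, is
\[
w(S_{1,1}\circ\cdots\circ S_{1,m},\dots,S_{n,1}\circ\cdots\circ S_{n,m})\subseteq w(S_{1,1},\dots,S_{n,1})\circ\cdots\circ w(S_{1,m},\dots,S_{n,m}).
\]
Combining monotonicity of $w$ in each argument, this inclusion, the first inclusion applied to each factor $w(S_{1,k},\dots,S_{n,k})$, and the downward closure of the ideal $\bigvee_i\mathcal I_i$ yields $w(T_1,\dots,T_n)\in\bigvee_i\mathcal I_i$. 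The sole genuine obstacle is that the generators of the join are drawn from \emph{different} ideals $\mathcal I_i$, so compatibility of the individual $\mathcal I_i$ cannot be invoked directly; the ``one variable at a time'' trick, available precisely because $\Delta$ lies in every $\mathcal I_i$, is what isolates a single coordinate and thereby reduces each factor to an application of compatibility within one ideal at a time.
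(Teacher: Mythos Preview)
Your proof is correct and follows essentially the same approach as the paper: both arguments dominate $w(R_1,\ldots,R_n)$ by a finite composition of relations of the form $w(\Delta,\ldots,\Delta,R',\Delta,\ldots,\Delta)$ with $R'\in\bigcup_i\mathcal I_i$, using reflexivity ($\Delta\in\mathcal I_i$) and compatibility of each $\mathcal I_i$ to place each such factor in a single $\mathcal I_i$. The paper compresses the two inclusions you separate into one sentence, but the content is the same.
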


\begin{proof} Let $\mathcal I_i$ be superequivalences on $A$ compatible with an $n$-ary operation $w$. We have $\bigwedge_i\mathcal I_i=\bigcap_i\mathcal I_i$. We must show that if $R_1$, $\ldots$, $R_n\in\mathcal I_i$ for all $i$, then $w(R_1,\ldots,R_n)\in\mathcal I_i$ for all $i$. However, each $\mathcal I_i$ is compatible with $w$.

On the other hand, we have $\bigvee_i\mathcal I_i=\Ig^\circ\{\,\bigcup_i\mathcal I_i\,\}$. Given $R_1$, $\ldots$, $R_n\in\bigvee_i\mathcal I_i$, we need to have $w(R_1,\ldots,R_n)\in\bigvee_i\mathcal I_i$. However, each $R_j$ is contained in a finite composition of relations $R'_{j,k}$ for some relations $R'_{j,k}\in\mathcal I_{k}$, and
$w(R_1,\ldots,R_n)$ is thus a finite composition of relations of the form
$w(\Delta,\ldots,\Delta,R'_{j,k},\Delta,\ldots,\Delta)$, which is a relation in $\mathcal I_{k}$ because $\mathcal I_{k}$ is compatible with $w$.
Thus, $w(R_1,\ldots,R_n)$ is contained in a finite composition of elements of $\bigcup_i\mathcal I_i$.
\end{proof}

\subsection{$\SEg^A\mathcal I$ and $\SEg^{|A|}\mathcal I$} As a corollary, given an ideal of relations $\mathcal I$ on $A$, we can define
\[\SEg^A\mathcal I=\bigwedge_{\mathcal I\leq\mathcal J\in\SuperEqv A}\mathcal J,\]
and we have $\pair{[\mathcal I\mapsto\SEg^A\mathcal I]}{[\mathcal J\mapsto \mathcal J]}:\Idl\Rel A\rightharpoonup\OpSuperEqv A$.

As with $\Ug$, we differentiate between $\SEg^A\mathcal I$ and $\SEg^{|A|}\mathcal I$.

\begin{lemma}  If $\mathcal I$ is a symmetric ideal of relations on $A$, then
\[\SEg^{|A|}\mathcal I=\Ig^\circ\{\,\bigcup_i\mathcal I_i\,\}.\]
\end{lemma}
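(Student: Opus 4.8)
The plan is to read the right-hand side as $\Ig^\circ\mathcal I$, the ideal generated by the finite compositions of members of $\mathcal I$, and to prove the equation by identifying $\Ig^\circ\mathcal I$ as the least superequivalence on $|A|$ containing $\mathcal I$; since $\SEg^{|A|}\mathcal I$ is by definition the meet $\bigwedge_{\mathcal I\le\mathcal J\in\SuperEqv|A|}\mathcal J$, this is exactly what is asserted. I would split the work into showing (i) that $\Ig^\circ\mathcal I$ is a superequivalence containing $\mathcal I$, and (ii) that $\Ig^\circ\mathcal I\le\mathcal J$ for every superequivalence $\mathcal J$ with $\mathcal I\le\mathcal J$. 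Part (i) yields $\SEg^{|A|}\mathcal I\le\Ig^\circ\mathcal I$, and part (ii), taking the meet over all admissible $\mathcal J$, yields the reverse inequality.

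For (i), containment of $\mathcal I$ is immediate, since each $R\in\mathcal I$ is a length-one composition. For $\mathrm{(SE_t)}$ I would use that relation composition distributes over unions, so that if $P\subseteq\bigcup_jC_j$ and $Q\subseteq\bigcup_lD_l$ with the $C_j,D_l$ finite compositions of members of $\mathcal I$, then $P\circ Q\subseteq\bigcup_{j,l}C_j\circ D_l$, and each $C_j\circ D_l$ is again a finite composition of members of $\mathcal I$; hence $P\circ Q\in\Ig^\circ\mathcal I$, giving $\Ig^\circ\mathcal I\circ\Ig^\circ\mathcal I\le\Ig^\circ\mathcal I$. For $\mathrm{(SE_s)}$ I would use $(R_1\circ\cdots\circ R_n)^{\mathrm{op}}=R_n^{\mathrm{op}}\circ\cdots\circ R_1^{\mathrm{op}}$ together with $\mathcal I=\mathcal I^{\mathrm{op}}$, so that the generating set of finite compositions is stable under $\mathrm{op}$ and therefore so is the ideal it generates.

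For (ii), let $\mathcal J$ be a superequivalence with $\mathcal I\le\mathcal J$, and let $P\in\Ig^\circ\mathcal I$, so $P\subseteq\bigcup_jC_j$ with each $C_j=R_{j,1}\circ\cdots\circ R_{j,n_j}$ and each $R_{j,k}\in\mathcal I\subseteq\mathcal J$. Applying $\mathrm{(SE_t)}$ inductively gives $C_j\in\mathcal J$; since $\mathcal J$ is an ideal it contains the finite join $\bigcup_jC_j$ and is downward closed, whence $P\in\mathcal J$. Thus $\Ig^\circ\mathcal I\le\mathcal J$, as required.

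The one point that genuinely needs attention — and the main obstacle — is reflexivity $\mathrm{(SE_r)}$: every superequivalence contains $\Delta$, so the stated equality can hold only if $\Delta\in\Ig^\circ\mathcal I$, which the purely compositional construction does not manufacture from a merely symmetric $\mathcal I$. I would therefore read the hypothesis ``symmetric ideal'' as requiring that $\mathcal I$ be a semisuperequivalence (so $\Delta\in\mathcal I$, matching the semiuniformity hypothesis of Theorem~\ref{T:WTheorem}), making $\mathrm{(SE_r)}$ immediate. Everything else is a routine unwinding of the definitions of ideal, of the composition of ideals, and of $\mathrm{(SE_r)}$--$\mathrm{(SE_t)}$; the fact that finite compositions suffice here, in contrast to the infinitary $[\,U_n:n\in\mathbb N_+\,]$ construction needed for $\Ug\mathcal S$ in Theorem~\ref{T:WTheorem}, is precisely the payoff of working in the algebraic lattice $\Idl\Rel|A|$.
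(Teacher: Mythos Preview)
The paper states this lemma without proof, so there is no argument of the author's to compare against. Your reading of the right-hand side as $\Ig^\circ\mathcal I$ is the only sensible one: the printed formula $\Ig^\circ\{\,\bigcup_i\mathcal I_i\,\}$ is evidently a slip carried over from the join formula a few lines earlier, since no family $\{\mathcal I_i\}$ is in scope.

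Your argument is correct and is exactly what one would expect here. The two-part structure---showing $\Ig^\circ\mathcal I$ is a superequivalence containing $\mathcal I$, then showing it sits below every superequivalence $\mathcal J\supseteq\mathcal I$---is the natural one, and your verifications of $(\mathrm{SE_s})$ and $(\mathrm{SE_t})$ via $(R_1\circ\cdots\circ R_n)^{\mathrm{op}}=R_n^{\mathrm{op}}\circ\cdots\circ R_1^{\mathrm{op}}$ and distributivity of $\circ$ over finite unions are clean.

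You are also right to flag the reflexivity issue: as literally stated, ``symmetric'' alone does not force $\Delta\in\Ig^\circ\mathcal I$, so either one must read ``symmetric ideal'' as ``semisuperequivalence'' (reflexive and symmetric), or one must take the convention that finite compositions include the empty composition, which is $\Delta$. Either reading repairs the lemma, and the paper is silent on which is intended. Given the surrounding context (the lemma is clearly meant to parallel Theorem~\ref{T:WTheorem}, whose hypothesis is that $\mathcal S$ is a \emph{semi}uniformity), your proposed reading is the right one.
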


\subsection{Compatible superequivalences of congruence-permutable algebras}

As with $\Unif$, Mal'tsev's Theorem generalizes easily:

\begin{theorem}
Let $A$ be an algebra with a Mal'tsev term $p$. Then
any two compatible superequivalences on $A$ permute.
\end{theorem}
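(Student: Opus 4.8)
The plan is to transport the classical Mal'tsev argument — that two congruences on an algebra with a Mal'tsev term permute — up to the level of ideals of relations. First I would record the two preliminary facts I need. Since $\mathcal I$ and $\mathcal I'$ are compatible superequivalences, each is compatible not merely with the basic operations but with every term operation, and in particular with $p$: by the Remark above, compatibility of $\mathcal I$ with an $n$-ary operation $w$ says exactly that $w$ is a superequivalent function $\pair A{\mathcal I}^n\to\pair A{\mathcal I}$, and superequivalent functions are closed under composition and products (they form a category with finite products), so the term operation $p$ is a superequivalent function $\pair A{\mathcal I}^3\to\pair A{\mathcal I}$, and likewise for $\mathcal I'$. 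Second, by reflexivity $\mathrm{(SE_r)}$ we have $\Delta\in\mathcal I$ and $\Delta\in\mathcal I'$.

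By the symmetry of the statement in $\mathcal I$ and $\mathcal I'$, it suffices to prove the one inclusion $\mathcal I\circ\mathcal I'\leq\mathcal I'\circ\mathcal I$; interchanging the two superequivalences then gives the reverse inclusion, whence equality and permutability. Since $\mathcal I'\circ\mathcal I=\Ig\{\,S'\circ S\mid S'\in\mathcal I',\,S\in\mathcal I\,\}$ is an ideal, it is enough to show that every generator $R\circ R'$ of $\mathcal I\circ\mathcal I'$, with $R\in\mathcal I$ and $R'\in\mathcal I'$, lies in $\mathcal I'\circ\mathcal I$.

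To that end I would set $R_1=p(R,\Delta,\Delta)$ and $R'_1=p(\Delta,\Delta,R')$. Compatibility with $p$, together with $R\in\mathcal I$, $R'\in\mathcal I'$, and $\Delta\in\mathcal I\cap\mathcal I'$, gives $R_1\in\mathcal I$ and $R'_1\in\mathcal I'$. The crux is the relation-level inclusion $R\circ R'\subseteq R'_1\circ R_1$. Given $a\mathrel Rb\mathrel{R'}c$, put $d=p(a,b,c)$. The Mal'tsev identity $p(x,y,y)=x$ gives $a=p(a,b,b)$, so from $a\mathrel\Delta a$, $b\mathrel\Delta b$, $b\mathrel{R'}c$ we get $a=p(a,b,b)\mathrel{R'_1}p(a,b,c)=d$; and the identity $p(x,x,y)=y$ gives $c=p(b,b,c)$, so from $a\mathrel Rb$, $b\mathrel\Delta b$, $c\mathrel\Delta c$ we get $d=p(a,b,c)\mathrel{R_1}p(b,b,c)=c$. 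Hence $(a,c)\in R'_1\circ R_1$. As $R'_1$ and $R_1$ depend only on $R$ and $R'$, and not on the witness $b$, this holds for every pair in $R\circ R'$, yielding $R\circ R'\subseteq R'_1\circ R_1$. Since $R'_1\circ R_1\in\mathcal I'\circ\mathcal I$ and the latter is downward closed, $R\circ R'\in\mathcal I'\circ\mathcal I$, as required.

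The argument is essentially routine once the two ``pushed'' relations $R_1$ and $R'_1$ are identified, so I do not expect a genuine obstacle. The only point requiring care is the bookkeeping: verifying that the intermediate element $d=p(a,b,c)$ places $R'_1$ (built from $\mathcal I'$) on the left and $R_1$ (built from $\mathcal I$) on the right, so that the witnessed composition lands in the target $\mathcal I'\circ\mathcal I$ rather than $\mathcal I\circ\mathcal I'$, and that these relations are independent of the choice of $b$, so that one obtains an inclusion of relations and not merely a pointwise statement.
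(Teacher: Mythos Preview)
Your proof is correct and follows essentially the same approach as the paper: both reduce to showing $R\circ R'\subseteq p(\Delta,\Delta,R')\circ p(R,\Delta,\Delta)$ via the intermediate element $p(a,b,c)$ and the Mal'tsev identities, using reflexivity to get $\Delta\in\mathcal I\cap\mathcal I'$ and compatibility to place the pushed relations in the appropriate superequivalences. Your additional justification that compatibility extends from basic operations to the term operation $p$ is a welcome bit of care that the paper leaves implicit.
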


\begin{proof}
Let $\mathcal I$, $\mathcal I'$ be compatible superequivalences on $A$.
It suffices to show $\mathcal I\circ\mathcal I'\leq\mathcal I'\circ\mathcal I$. Thus it suffices to show, if $R\in\mathcal I$, $R'\in\mathcal I'$, then $R\circ R'\in\mathcal I'\circ\mathcal I$.

$\Delta$ belongs to every superequivalence, by $\mathrm{(SE_r)}$.
Thus we have
\[p(\Delta,\Delta,R')\circ p(R,\Delta,\Delta)\in\mathcal I'\circ\mathcal I\]
since $\mathcal I$ and $\mathcal I'$ are compatible with the term operation $p$.
$a\mathrel R b\mathrel R' c$ then implies
\[a=p(a,b,b)\mathrel {p(\Delta,\Delta,R')} p(a,b,c)
\mathrel{p(R,\Delta,\Delta)}p(b,b,c)=c,\]
proving that
$R\circ R'\leq  p(\Delta,\Delta,R')\circ
p(R,\Delta,\Delta)\in\mathcal I'\circ\mathcal I$.
\end{proof}

\begin{corollary}
Let $A$ be an algebra with a Mal'tsev term.  Then $\OpSuperEqv A$ is modular.
\end{corollary}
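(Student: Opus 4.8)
The plan is to deduce the corollary directly from the two theorems immediately preceding it, so that essentially no new argument is required. First I would fix the relevant lattice: it is $\OpSuperEqv A$, the compatible superequivalences on $A$, and the natural ambient object is $\OpSuperEqv |A|$, the superequivalences on the underlying set. The earlier theorem stating that the meet and join of superequivalences compatible with each operation $w$ are again compatible with $w$ tells us precisely that $\OpSuperEqv A$ is closed under the meet and join inherited from $\OpSuperEqv |A|$. Hence $\OpSuperEqv A$ is a sublattice of $\OpSuperEqv |A|$.

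Next I would invoke the Mal'tsev permutability theorem just established: since $A$ has a Mal'tsev term $p$, any two compatible superequivalences on $A$ permute, $\mathcal I\circ\mathcal I'=\mathcal I'\circ\mathcal I$. Thus the elements of the sublattice $\OpSuperEqv A$ permute pairwise.

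Finally I would apply, with $S=|A|$, the theorem that any pairwise-permuting sublattice of $\OpSuperEqv S$ is modular. Combining the three observations gives that $\OpSuperEqv A$ is modular, which is the assertion.

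The only point worth checking — and the sole place where something could conceivably fail — is the sublattice claim, namely that the join computed inside $\OpSuperEqv A$ coincides with the join computed inside $\OpSuperEqv |A|$ (the meet is just intersection of ideals in both, so it is unproblematic). This is exactly the content of the compatibility-of-joins theorem, so there is no genuine obstacle: the corollary is a formal consequence of the two theorems, mirroring the analogous deduction for compatible uniformities in the congruence-permutable case.
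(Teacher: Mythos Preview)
Your proposal is correct and matches the paper's intended argument: the corollary is stated without proof precisely because it follows immediately from the permutability theorem for compatible superequivalences and the earlier theorem that any pairwise-permuting sublattice of $\OpSuperEqv S$ is modular, together with the fact that $\OpSuperEqv A$ is a sublattice of $\OpSuperEqv |A|$.
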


\begin{example}
Let $A$ be an algebra with a Mal'tsev term. Then an ideal in $\Con A$ is a compatible superequivalence, because if $\alpha$, $\beta\in\mathcal I$, then $\alpha\circ\beta=\alpha\vee\beta\in\mathcal I$.
\end{example}

\subsection{Compatible superequivalences of algebras in congruence-modular
varieties}\label{S:CMSE}

The above results show  that existence of a Mal'tsev term, in a variety containing an
algebra $A$, influences the structure of the lattice
$\OpSuperEqv A$ in the same way it influences that of $\Con
A$. 
We are led to ask whether (unlike the situation for algebra objects in $\Unif$)  the existence of a sequence of Day terms in a
variety $\mathbf V$ forces $\OpSuperEqv A$ to be modular when
$A\in\mathbf V$.

\begin{notation} In connection with Day terms, given a relation $R$ on an algebra $A$, we will define
\begin{align*}
\mathbf m(R)&=\bigcup_im_i(R,R,R,R)\\
&=\bigcup_i\{\,\pair{m_i(a,b,c,d)}{m_i(a',b',c',d')}\mid
a\mathrel Ra',b\mathrel Rb',c\mathrel Rc',\text{ and }d\mathrel Rd'\,\}.
\end{align*}\end{notation}

\begin{lemma}\label{T:mofRSE} We have
\begin{enumerate}
\item
$R\subseteq\mathbf m(R)$.
\item If a superequivalence $\mathcal I$ is compatible with the operations $m_i$, and $R\in\mathcal I$, then $\mathbf m(R)\in\mathcal I$.
\end{enumerate} 
\end{lemma}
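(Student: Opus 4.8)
The plan is to treat the two parts separately, with part (1) resting on a single boundary Day identity and part (2) on compatibility together with the ideal axioms. Both parts are short; the work is mostly in reading off the right identity and invoking the right closure property.

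For part (1), I would observe that since $\mathbf m(R)$ is the union over all $i$ of the relations $m_i(R,R,R,R)$, it suffices to place each pair of $R$ inside a single one of these. The natural choice is $m_0$: by identity (D2), $m_0$ is the first projection, so $m_0(x,x,x,x)=x$ and $m_0(y,y,y,y)=y$ for all $x$, $y$. Hence, given $x\mathrel R y$, the pair $\pair xy=\pair{m_0(x,x,x,x)}{m_0(y,y,y,y)}$ lies in $m_0(R,R,R,R)$, because all four coordinate conditions $x\mathrel R y$ hold by hypothesis. This gives $R\subseteq m_0(R,R,R,R)\subseteq\mathbf m(R)$ at once, and the empty relation is handled trivially since then there is nothing to check. (Using (D3) and $m_\D$ would work equally well.)

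For part (2), I would show that each $m_i(R,R,R,R)\in\mathcal I$ and then take a finite join. Since $\mathcal I$ is assumed compatible with each operation $m_i$ and $R\in\mathcal I$, the definition of compatibility, applied with all four arguments equal to $R$, yields $m_i(R,R,R,R)\in\mathcal I$ for every $i$. Now $\mathbf m(R)=\bigcup_{i}m_i(R,R,R,R)$, and in the lattice $\Rel A$ the join of relations is their union; because the sequence of Day terms $m_0,\ldots,m_\D$ is finite, this is a finite join of members of $\mathcal I$. As $\mathcal I$ is an ideal, it is closed under finite joins, so $\mathbf m(R)\in\mathcal I$.

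The only point requiring any care, and the nearest thing to an obstacle, is the finiteness of the index set: an ideal is closed only under \emph{finite} joins, so it is essential that Day's Theorem supplies a finite sequence $m_0,\ldots,m_\D$. Beyond that the argument is routine and uses nothing about $\mathcal I$ except the ideal property and compatibility with the $m_i$; in particular neither symmetry, reflexivity, nor transitivity of $\mathcal I$ is needed here.
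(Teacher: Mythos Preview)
Your proof is correct and essentially the same as the paper's. The only cosmetic difference is in part~(1): the paper invokes the identity $m_i(a,a,a,a)=a$ for \emph{all} $i$ (which follows from (D1) by setting $y=x$), whereas you use (D2) to single out $m_0$; either route lands $R$ inside $\mathbf m(R)$ immediately, and part~(2) is handled identically in both.
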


\begin{proof}
(1):
$m_i(a,a,a,a)=a$ for all $i$ and all $a\in A$.

(2): The tuple $\mathbf m$ is finite. Since $\mathcal I$ is compatible, $m_i(R)\in\mathcal I$ for each $i$, but $\mathcal I$ is an ideal of relations.

\end{proof}

For the following discussion, $\mathbf V$ will be a
congruence-modular variety, $m_i$, $i=0$, $\ldots$, $\D$
will be a sequence of Day terms for $\mathbf V$, and $A$
will be an algebra in $\mathbf V$. Our first goal will be
to prove a generalization of the Shifting Lemma
\cite{Gumm}.

\begin{lemma} \label{T:Lemma1Eqv}
Let $\mathcal I\in\OpSuperEqv A$. If $R\in\mathcal I$, and $Y=\mathbf m(R')^{2\D}$, the relational composition of $2\D$ copies of the relation $\mathbf m(R')$,  where $R'=R\cup R^{\mathrm{op}}$, then if 
$a$, $b$, $c$, $d\in A$ with $b\mathrel{R}d$ and
$m_i(a,a,c,c)\mathrel{R}m_i(a,b,d,c)$ for all $i$,
we have
$a\mathrel Yc$.
\end{lemma}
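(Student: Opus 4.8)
The plan is to run the classical ``Day-term snake'' $u_i=m_i(a,b,d,c)$, $i=0,\ldots,\D$, which by (D2) and (D3) begins at $u_0=a$ and ends at $u_\D=c$, and to connect $a$ to $c$ by at most $2\D$ single $\mathbf m(R')$-steps, two per transition $u_i\rightsquigarrow u_{i+1}$. Throughout I would use that $R'\subseteq\mathbf m(R')$ (Lemma~\ref{T:mofRSE}(1)), that $\mathbf m(R')$ is symmetric (since $R'$ is), and the companion snake $v_i=m_i(a,a,c,c)$, which also runs from $a$ to $c$. The hypothesis is exactly that $v_i\mathrel R u_i$, hence $u_i\mathrel{R'}v_i$ and $v_i\mathrel{R'}u_i$, for every $i$; specialising to $i=0$ and $i=\D$, where $u_i=v_i$, records the self-relations $a\mathrel Ra$ and $c\mathrel Rc$. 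These, together with $b\mathrel Rd$, are the only ``reflexivity'' data available, and controlling their use is what the whole argument is about.

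The even transitions are the easy half. For even $i<\D$ identity (D4) gives $v_i=v_{i+1}$, so
\[u_i\mathrel{R'}v_i=v_{i+1}\mathrel{R'}u_{i+1},\]
which displays $u_i\mathrel{\mathbf m(R')^{\circ2}}u_{i+1}$, the two edges being supplied \emph{directly} by the hypothesis with no reflexivity of $R$ required. (At the two ends this even collapses to a single edge, because $u_0=v_0=a$ and $u_\D=v_\D=c$, giving a little slack in the count.)

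The odd transitions are the heart of the matter and where I expect the main obstacle. For odd $i<\D$ identity (D5) furnishes the bridge $m_i(a,b,b,c)=m_{i+1}(a,b,b,c)$, and in the reflexive Key Lemma (Theorem~\ref{T:KeyLemma}) one moves from $u_i$ to this bridge by changing only the third coordinate from $d$ to $b$, using $d\mathrel Rb$ and the \emph{reflexivity} of the relation in the remaining coordinates. Here $R$ is not reflexive and $b\mathrel{R'}b$ need not hold, so that single move is not an $\mathbf m(R')$-step: along the evident representations the unordered pair of middle entries $\{b,d\}$ is an invariant of $\mathbf m(R')$-moves, so the equal-middle bridge is never reached in this naive way. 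The device that rescues the argument is that $b$ and $d$ become self-related once a second step is allowed, via $b\mathrel{R'}d\mathrel{R'}b$; passing to the symmetric $R'$ and routing the missing reflexivity at $b$ (and at $d$) through the edge $b\mathrel{R'}d$ is precisely what forces the composition length to grow from $\D$ to $2\D$. Concretely I would realise each odd transition as a bounded detour built from the self-related data $a\mathrel{R'}a$, $c\mathrel{R'}c$, $b\mathrel{R'}d$ and the hypothesis pairs $u_j\mathrel{R'}v_j$, arranged so that every edge is a genuine application of some $m_j$ to an $R'$-related quadruple and therefore lies in $\mathbf m(R')$.

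Concatenating the $\D$ transitions, each costing at most two $\mathbf m(R')$-steps, yields $a=u_0\mathrel{\mathbf m(R')^{\circ2\D}}u_\D=c$, that is, $a\mathrel Yc$. The delicate point, and the step I expect to be hardest, is the precise bookkeeping that keeps the \emph{odd} transitions within the two-step budget in spite of the failure of reflexivity of $R$ at $b$ and $d$; the compatibility of the superequivalence $\mathcal I$ with the $m_i$, which guarantees that the auxiliary images land in $\mathcal I$ (cf.\ Lemma~\ref{T:mofRSE}), and the symmetry of $R'$ are the tools I would lean on, and checking that the total chain never exceeds length $2\D$ is the crux of the calculation.
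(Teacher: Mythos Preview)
Your outline is the paper's: run the snakes $u_i=m_i(a,b,d,c)$, $v_i=m_i(a,a,c,c)$ and spend two $\mathbf m(R')$-steps per transition. The even case is exactly as you write, via $u_i\mathrel{\mathbf m(R')}v_i=v_{i+1}\mathrel{\mathbf m(R')}u_{i+1}$.

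For the odd case the paper does precisely the move you rejected: it passes through the $(\mathrm{D5})$ bridge,
\[
u_i=m_i(a,b,d,c)\;\mathrel{\mathbf m(R')}\;m_i(a,b,b,c)=m_{i+1}(a,b,b,c)\;\mathrel{\mathbf m(R')}\;m_{i+1}(a,b,d,c)=u_{i+1},
\]
each edge coming from $m_i$ (resp.\ $m_{i+1}$) applied to the quadruples $(a,b,d,c)$ and $(a,b,b,c)$, and hence using $(b,b)\in R'$. Your objection is correct: the stated hypotheses yield $a\mathrel{R'}a$ and $c\mathrel{R'}c$ (from $i=0,\D$) and $b\mathrel{R'}d$, but not $b\mathrel{R'}b$. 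So the paper's written argument has exactly the gap you flagged.

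However, your proposal does not close that gap either. The promised ``bounded detour'' is never specified, and the one legal move you have without a self-relation at $b$ or $d$, namely $m_i(a,b,d,c)\mathrel{\mathbf m(R')}m_i(a,d,b,c)$ via $(a,a),(b,d),(d,b),(c,c)$, does not land on a $(\mathrm{D5})$-bridge point; I do not see a two-step path from $u_i$ to $u_{i+1}$ that avoids both $b\mathrel{R'}b$ and $d\mathrel{R'}d$.

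The repair is cheap and is what makes the lemma harmless in its only use (the Shifting Lemma): since $\mathcal I$ satisfies $\mathrm{(SE_r)}$ we have $\Delta_A\in\mathcal I$, so $R$ may be replaced by $R\cup\Delta_A\in\mathcal I$ at the outset. With $R$ reflexive the odd step goes through exactly as the paper writes it, and the $2\D$ budget is met. In short: your plan and the paper's coincide; your worry about the odd transition is well founded; neither you nor the paper supplies an argument for the lemma \emph{as literally stated}; and the fix is to take $R\supseteq\Delta_A$, which costs nothing downstream.
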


\begin{proof}
Let $u_i=m_i(a,b,d,c)$ and $v_i=m_i(a,a,c,c)$ for all $i$.
If $b\mathrel Rd$ and $m_i(a,a,c,c)\mathrel Rm_i(a,b,d,c)$ for all $i$, then for even $i<\D$, we have
$u_i\mathrel{\mathbf m(R')}v_i=v_{i+1}\mathrel{\mathbf m(R')}u_{i+1}$, while for  odd $i<\D$, we have
$u_i\mathrel{\mathbf m(R')}m_i(a,b,b,c)=m_{i+1}(a,b,b,c)\mathrel{\mathbf m(R')}u_{i+1}$.

Thus, for $i<\D$, $u_i\mathrel{\mathbf m(R')\circ\mathbf m(R')}u_{i+1}$, and consequently, $a=u_0\mathrel Yu_\D=c$.
\end{proof}

\begin{lemma}[A Shifting Lemma for Superequivalences] \label{T:ShiftSuperEqv}
Let $\mathcal R\in\Idl\Rel A$ satisfy $\mathrm{(SE_r)}$ and $\mathrm{(SE_s)}$ and be compatible, and let
$\mathcal I_1$,
$\mathcal I_2\in\OpSuperEqv A$ be such that
$\mathcal R\wedge\mathcal I_1\leq\mathcal I_2\leq\mathcal I_1$. Then
$(\mathcal R\circ(\mathcal I_1\wedge\mathcal I_2)\circ\mathcal
R)\wedge\mathcal I_1\leq\mathcal I_2$.
\end{lemma}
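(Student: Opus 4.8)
The plan is to apply Lemma~\ref{T:Lemma1Eqv} with $\mathcal I=\mathcal I_2$. First I would observe that, since $\mathcal I_2\le\mathcal I_1$, we have $\mathcal I_1\wedge\mathcal I_2=\mathcal I_2$, so the asserted inequality reads $(\mathcal R\circ\mathcal I_2\circ\mathcal R)\wedge\mathcal I_1\le\mathcal I_2$. As $\wedge$ is intersection and the ideal $\mathcal R\circ\mathcal I_2\circ\mathcal R$ is generated by the relations $P\circ S\circ Q$ with $P,Q\in\mathcal R$ and $S\in\mathcal I_2$, it suffices to show that every relation of the form $W=(P\circ S\circ Q)\cap M$, with $P,Q\in\mathcal R$, $S\in\mathcal I_2$, $M\in\mathcal I_1$, lies in $\mathcal I_2$; for this I would exhibit one relation $Y\in\mathcal I_2$ with $W\subseteq Y$. (Any $N$ in the meet satisfies $N\subseteq P\circ S\circ Q$ and $N\in\mathcal I_1$, whence $N=(P\circ S\circ Q)\cap N$ is of this form.)

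Fix $P,Q,S,M$ and take $\pair xy\in W$, with witnesses $x\mathrel Pu\mathrel Sv\mathrel Qy$ and $\pair xy\in M$. Putting $a=x$, $b=u$, $c=y$, $d=v$, Lemma~\ref{T:Lemma1Eqv} will give $\pair xy\in\mathbf m(R')^{\circ 2\D}$ for \emph{any} $R\in\mathcal I_2$ with $u\mathrel Rv$ and $m_i(x,x,y,y)\mathrel Rm_i(x,u,v,y)$ for all $i$. The core of the argument is to construct, uniformly in $P,Q,S,M$, such an $R\in\mathcal I_2$. By compatibility of $\mathcal R$, moving the second and third coordinates along $P$ and $Q^{\mathrm{op}}$ shows $\pair{m_i(x,x,y,y)}{m_i(x,u,v,y)}\in m_i(\Delta,P,Q^{\mathrm{op}},\Delta)\in\mathcal R$, and of course $u\mathrel Sv$ with $S\in\mathcal I_2$.

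The main obstacle is that these same pairs must \emph{also} be placed in $\mathcal I_1$, so that $\mathcal R\wedge\mathcal I_1\le\mathcal I_2$ forces them into $\mathcal I_2$. This looks impossible at first, because the second and third coordinates move along the $\mathcal R$-edges $\pair xu$ and $\pair yv$, which need not lie in $\mathcal I_1$. The resolution is identity $\mathrm{(D1)}$: working modulo $\mathcal I_1$ and using $\pair xy\in M\subseteq\mathcal I_1$ together with $\pair uv\in S\subseteq\mathcal I_2\subseteq\mathcal I_1$, one gets $m_i(x,x,y,y)\equiv m_i(x,x,x,x)=x$ and $m_i(x,u,v,y)\equiv m_i(x,u,u,x)=x$, so both terms collapse to $x$ modulo $\mathcal I_1$. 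Concretely, $\pair{m_i(x,x,y,y)}{x}\in m_i(\Delta,\Delta,M^{\mathrm{op}},M^{\mathrm{op}})$ and $\pair{m_i(x,u,v,y)}{x}\in m_i(\Delta,\Delta,S^{\mathrm{op}},M^{\mathrm{op}})$, both relations of $\mathcal I_1$; composing these through $x$ puts the pair into a fixed relation $G_i\in\mathcal I_1$ depending only on $S$ and $M$.

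Consequently $N_i:=m_i(\Delta,P,Q^{\mathrm{op}},\Delta)\cap G_i$ is a fixed relation lying in $\mathcal R\wedge\mathcal I_1\le\mathcal I_2$ and containing every pair $\pair{m_i(x,x,y,y)}{m_i(x,u,v,y)}$ produced by $W$. I would then set $R:=S\vee\bigvee_{i=0}^{\D}N_i$, a finite join of members of $\mathcal I_2$, so $R\in\mathcal I_2$; with $R'=R\cup R^{\mathrm{op}}\in\mathcal I_2$, Lemma~\ref{T:mofRSE} gives $\mathbf m(R')\in\mathcal I_2$, and closure of $\mathcal I_2$ under composition gives $Y:=\mathbf m(R')^{\circ 2\D}\in\mathcal I_2$. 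For each $\pair xy\in W$ (with its own $u,v$) the hypotheses $u\mathrel Rv$ and $m_i(x,x,y,y)\mathrel Rm_i(x,u,v,y)$ now hold, so Lemma~\ref{T:Lemma1Eqv} yields $\pair xy\in Y$. Thus $W\subseteq Y\in\mathcal I_2$, hence $W\in\mathcal I_2$, which is the required generator estimate and completes the proof.
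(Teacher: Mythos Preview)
Your proof is correct and follows essentially the same approach as the paper's: reduce to a single generator $(P\circ S\circ Q)\cap M$, build a fixed relation in $\mathcal I_2$ that witnesses the hypotheses of Lemma~\ref{T:Lemma1Eqv} for every pair in that generator, and conclude. The differences are cosmetic: the paper symmetrizes to use a single $R\in\mathcal R$ in place of your $P,Q$, packages the per-$i$ relations via $\mathbf m(\cdot)$ rather than taking an explicit finite join $\bigvee_i N_i$, and uses a three-step $\mathcal I_1$-chain $m_i(a,a,c,c)\to a\to m_i(a,b,b,c)\to m_i(a,b,d,c)$ where you use the two-step chain through $x$; both routes land the pair in $\mathcal R\wedge\mathcal I_1\le\mathcal I_2$ as required.
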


\begin{proof} 
It suffices to show that, given relations $R\in\mathcal R$, $F\in\mathcal I_1$, and $X\in\mathcal I_1\wedge\mathcal I_2=\mathcal I_1\cap\mathcal I_2$, there is a relation $Y\in\mathcal I_2$ such that
\[(R\circ X\circ R)\cap F\subseteq Y;\]
to prove this, it suffices to construct $Y$ and show that $a\mathrel Rb\mathrel Xd\mathrel Rc$ and $a\mathrel Fc$ imply $a\mathrel Yc$.

Without loss of generality, we may assume that the given $R$, $F$, and $X$ are symmetric, because $\mathcal R$, $\mathcal I_1$, and $\mathcal I_2$ all satisfy $\mathrm{(SE_s)}$.

Let $W=\mathbf m(X)\cup(\mathbf m(R)\cap(\mathbf m(F)\circ\mathbf m(F)\circ\mathbf m(X)))$. Since $\mathcal R$ is compatible, $\mathcal I_1$ and $\mathcal I_2$ are compatible superequivalences, and $\mathcal R\cap\mathcal I_1\subseteq\mathcal I_2$, we have $W\in\mathcal I_2$.

If $a\mathrel Rb\mathrel Xd\mathrel Rc$ and $a\mathrel Fc$, then
$m_i(a,a,c,c)\mathrel{\mathbf m(R)}m_i(a,b,d,c)$ for all $i$, and $m_i(a,a,c,c)\mathrel{\mathbf m(F)}m_i(a,a,a,a)=a=m_i(a,b,b,a)\mathrel{\mathbf m(F)}m_i(a,b,b,c)\mathrel{\mathbf m(X)}m_i(a,b,d,c)$ for all $i$. Thus, $m_i(a,a,c,c)\mathrel Wm_i(a,b,d,c)$ for all $i$. We also have $b\mathrel Wd$ because $b\mathrel Xd$.
Then by Lemma~\ref{T:Lemma1Eqv}, $a\mathrel Yc$ where $Y=(W\cup W^{\mathrm{op}})^{2\D}$.
\end{proof}

Finally, we have

\begin{theorem}\label{T:ModularSE} The lattice $\OpSuperEqv A$ is modular.
\end{theorem}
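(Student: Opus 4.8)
The plan is to deduce modularity from the Shifting Lemma (Lemma~\ref{T:ShiftSuperEqv}) by the same route used for congruence lattices in congruence-modular varieties. First I would reduce the modular law to a single inequality. The modular law asserts that whenever $\mathcal I_2\le\mathcal I_1$ in $\OpSuperEqv A$ and $\mathcal I\in\OpSuperEqv A$ is arbitrary, then $(\mathcal I\vee\mathcal I_2)\wedge\mathcal I_1\le\mathcal I_2\vee(\mathcal I\wedge\mathcal I_1)$; the reverse inequality holds in every lattice, so only this direction is at issue. Putting $\delta=\mathcal I_2\vee(\mathcal I\wedge\mathcal I_1)$, one has $\mathcal I\wedge\mathcal I_1\le\delta\le\mathcal I_1$ and $\mathcal I\vee\delta=\mathcal I\vee\mathcal I_2$, so the problem becomes: given superequivalences with $\mathcal I\wedge\mathcal I_1\le\delta\le\mathcal I_1$, show $(\mathcal I\vee\delta)\wedge\mathcal I_1\le\delta$.

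Next I would unwind the join. By the join formula for $\OpSuperEqv A$, every relation in $\mathcal I\vee\delta$ lies below a finite composition of relations drawn from $\mathcal I\cup\delta$; since $\Delta$ belongs to both $\mathcal I$ and $\delta$ and each is closed under composition, such a composition may be taken to alternate, say $R_1\circ X_1\circ R_2\circ\cdots\circ R_n$ with $R_i\in\mathcal I$ and $X_i\in\delta$. Thus it suffices to show, for every $n$, that any relation of this alternating form which also lies in $\mathcal I_1$ already lies in $\delta$. The engine is the Shifting Lemma applied with $\mathcal R=\mathcal I$ (a superequivalence, hence a compatible, reflexive, symmetric ideal), with $\mathcal I_1$ in the role of the larger superequivalence and $\delta$ in that of the smaller: since $\mathcal I\wedge\mathcal I_1\le\delta\le\mathcal I_1$ it yields $(\mathcal I\circ\delta\circ\mathcal I)\wedge\mathcal I_1\le\delta$, which settles the case $n=2$, while $n=1$ is just the hypothesis $\mathcal I\wedge\mathcal I_1\le\delta$. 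I would then try to induct on $n$, repeatedly using the Shifting Lemma to absorb one $\mathcal I$-block at a time, together with transitivity of $\mathcal I_1$ and the inclusion $\delta\le\mathcal I_1$, which lets a side-condition of the form $\langle a,c\rangle\in\mathcal I_1$ be transported across $\delta$-steps (so that leading and trailing $\delta$-steps, and chains with a single $\mathcal I$-excursion, can be disposed of immediately).

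The hard part will be exactly this inductive step. To apply the Shifting Lemma to an interior block of the chain one needs the two endpoints of that block to be $\mathcal I_1$-related, and for a chain that leaves its $\mathcal I_1$-class and returns only at the very end, no contiguous proper sub-block has $\mathcal I_1$-related endpoints; a naive induction on chain length therefore stalls as soon as the chain makes two or more $\mathcal I$-excursions. This is the same obstruction that appears in the congruence case, and I expect it to be overcome in the same way: rather than collapsing the chain one block at a time, one builds the full Day-term grid, applying the terms $m_i$ to the chain — precisely the mechanism already packaged in $\mathbf m(\cdot)$ and in Lemma~\ref{T:Lemma1Eqv} — so as to manufacture the auxiliary points and the $\mathcal I_1$-relations that the Shifting Lemma requires along the interior.

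Finally, I would point to algebraicity of $\OpSuperEqv A$ as the feature that makes the whole argument go through cleanly: it is exactly the property whose absence blocked the corresponding proof for $\OpUnif A$, and here it allows one to reduce to finitely generated relations, assemble the grid on them, and then pass back to the ideals. So the overall shape I anticipate is: reduce to $(\mathcal I\vee\delta)\wedge\mathcal I_1\le\delta$, express the join by alternating compositions, and close the induction using the Shifting Lemma fed by the Day-term grid, with algebraicity guaranteeing that the finitary construction suffices.
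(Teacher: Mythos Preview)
Your reduction is exactly the paper's: with $\mathcal I\le\mathcal I''$, show $(\mathcal I\vee\mathcal I')\wedge\mathcal I''\le\mathcal I\vee(\mathcal I'\wedge\mathcal I'')$, express the join as a directed union of iterated compositions, and invoke algebraicity of $\Idl\Rel A$ at the end to commute the meet past that union. The overall architecture is right.

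Where you diverge is in how the Shifting Lemma is deployed. You fix $\mathcal R=\mathcal I$ in Lemma~\ref{T:ShiftSuperEqv}, get $(\mathcal I\circ\delta\circ\mathcal I)\wedge\mathcal I_1\le\delta$, and then correctly note that the induction on chain length stalls; your proposed fix is to drop back to the raw Day-term mechanism of Lemma~\ref{T:Lemma1Eqv} and ``build the grid'' along the whole chain. That remedy is left vague, and in any case it is heavier than what is needed.

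The paper sidesteps the obstruction by letting $\mathcal R$ grow. With $\mathcal I_1=\mathcal I''$ and $\mathcal I_2=\mathcal I\vee(\mathcal I'\wedge\mathcal I'')$, set $\mathcal R'_0=\mathcal I'$ and $\mathcal R'_{k+1}=\mathcal R'_k\circ(\mathcal I_1\wedge\mathcal I_2)\circ\mathcal R'_k$. Each $\mathcal R'_k$ is a compatible, reflexive, symmetric ideal (transitivity is not required in Lemma~\ref{T:ShiftSuperEqv}), so it is a legitimate choice of $\mathcal R$. The inductive hypothesis $\mathcal R'_k\wedge\mathcal I_1\le\mathcal I_2$ is precisely the Shifting Lemma's hypothesis, and its conclusion is precisely $\mathcal R'_{k+1}\wedge\mathcal I_1\le\mathcal I_2$; the induction is therefore a single lemma invocation per step. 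Since $\mathcal I\le\mathcal I_1\wedge\mathcal I_2$, the parallel sequence $\mathcal R_0=\mathcal I'$, $\mathcal R_{k+1}=\mathcal R_k\circ\mathcal I\circ\mathcal R_k$ satisfies $\mathcal R_k\le\mathcal R'_k$ and $\bigcup_k\mathcal R_k=\mathcal I\vee\mathcal I'$, and algebraicity finishes in one line.

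So the gap in your proposal is not in the framework but in the inductive engine: with $\mathcal R$ fixed you are forced to reach for extra tools, whereas the natural move is to vary $\mathcal R$ so that each step is a single clean application of Lemma~\ref{T:ShiftSuperEqv}. No further Day-term work beyond what that lemma already packages is required.
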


\begin{proof}
Let $\mathcal I$, $\mathcal I'$,
$\mathcal I''\in\OpSuperUnif A$ be such
that $\mathcal I\leq\mathcal I''$. Then
\[\mathcal I\vee(\mathcal I'\wedge\mathcal I'')
\leq(\mathcal I\vee\mathcal I')\wedge\mathcal I'',
\]
as this inequality holds in any
lattice when $\mathcal I\leq\mathcal I''$.

Define $\mathcal R_k$ and $\mathcal R'_k$ for every natural number $k$ as $\mathcal
R_0=\mathcal R'_0=\mathcal I'$, $\mathcal R_{k+1}=\mathcal R_k\circ\mathcal I\circ\mathcal R_k$,
and $\mathcal R'_k=\mathcal R'_k\circ(\mathcal I_1\wedge\mathcal I_2)\circ\mathcal R'_k$ where $\mathcal I_1=\mathcal I''$ and $\mathcal I_2=\mathcal I\vee(\mathcal I'\wedge\mathcal I'')$. Since $\mathcal I\leq\mathcal I_2\leq\mathcal I_1$ we have for every $k$, $\mathcal R_k\leq\mathcal R'_k$. We also have $\mathcal R'_0\wedge\mathcal I_1=\mathcal I'\wedge\mathcal I''\leq\mathcal I_2$. Then by Lemma~\ref{T:ShiftSuperEqv} and induction on $k$, we have
$\mathcal R'_k\wedge\mathcal I''=\mathcal R'_k\wedge\mathcal I_1\leq\mathcal I_2$ for all $k$. Since the lattice $\Idl\Rel A$ is algebraic, and the sequence $\mathcal R'_k$ is increasing,
we have
\begin{align*}
(\mathcal I\vee\mathcal I')\wedge\mathcal I''
&=\left(\bigcup_k\mathcal R_k\right)\wedge\mathcal I''\\
&\leq\left(\bigcup_k\mathcal R'_k\right)\wedge\mathcal I''\\
&=\bigcup_k\left(\mathcal R'_k\wedge\mathcal I''\right)\\
&\leq\mathcal I_2\\
&=\mathcal I\vee(\mathcal I'\wedge\mathcal I'').
\end{align*}
\end{proof}

\section{Superuniformities}
\label{S:SuperUniformities}

\begin{notation} If $\mathcal E\in\Idl\Fil\Rel S$ for some set $S$, then $\mathcal E^{\mathrm{op}}$ will denote the ideal $\{\,\mathcal F^{\mathrm{op}}\mid\mathcal F\in\mathcal E\,\}$. If $\mathcal E$, $\mathcal E'\in\Idl\Fil\Rel S$, then $\mathcal E\circ\mathcal E'$ will denote the ideal $\Ig\{\mathcal F\circ\mathcal F'\mid\mathcal F\in\mathcal E,\mathcal F'\in\mathcal E'\,\}$.
\end{notation}

\begin{definition}
Consider the following conditions on $\mathcal E\in\Idl\Fil\Rel S$:
\begin{enumerate}
\item[$\mathrm{(SU_r)}$] $\Fg\{\,\Delta_S\,\}\in\mathcal E$;
\item[$\mathrm{(SU_s)}$] $\mathcal E=\mathcal E^{\mathrm{op}}$; and
\item[$\mathrm{(SU_t)}$] $\mathcal E\circ\mathcal E\leq\mathcal E$:
\end{enumerate}
We say that $\mathcal E$ is \emph{reflexive} if it satisfies condition $\mathrm{(SU_r)}$, \emph{symmetric} if it satisfies condition $\mathrm{(SU_s)}$, \emph{transitive} if it satisfies condition $\mathrm{(SU_t)}$, a \emph{semisuperuniformity} if it satisfies $\mathrm{(SU_r)}$ and $\mathrm{(SU_s)}$, and a \emph{superuniformity} if it satisfies $\mathrm{(SU_r)}$,
$\mathrm{(SU_s)}$, and
$\mathrm{(SU_t)}$.
\end{definition}

\begin{theorem}
Let $\mathcal E$ be an ideal of filters of relations on a set $S$. The following are equivalent:
\begin{enumerate}
\item $\mathcal E$ is a superuniformity;
\item $\mathcal E$ is generated, as an ideal, by semiuniformities, and closed under composition of filters of relations;
\item $\mathcal E$ is generated, as an ideal, by semiuniformities, and closed under relational powers of filters.
\end{enumerate}
\end{theorem}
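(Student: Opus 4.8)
The plan is to prove the cycle $(1)\Rightarrow(2)\Rightarrow(1)$ together with $(2)\Leftrightarrow(3)$, following the pattern of the corresponding theorem for superequivalences. The work splits into two essentially independent matchings: that being \emph{generated as an ideal by semiuniformities} is equivalent to the conjunction of $\mathrm{(SU_r)}$ and $\mathrm{(SU_s)}$, and that being \emph{closed under composition of filters} is exactly $\mathrm{(SU_t)}$. Throughout I must keep in mind that filters are ordered by reverse inclusion, so that a reflexive filter $\mathcal F$ (one all of whose members contain $\Delta_S$) is precisely one with $\mathcal F\ge\Fg\{\,\Delta_S\,\}$, and that $\mathcal F\mapsto\mathcal F^{\mathrm{op}}$ is an order-automorphism of $\Fil\Rel S$, hence sends ideals to ideals and commutes with ideal generation.

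The crux is the first matching. For the direction $(1)\Rightarrow(2)$, given $\mathcal F\in\mathcal E$ I would form $\mathcal S=\mathcal F\vee\mathcal F^{\mathrm{op}}\vee\Fg\{\,\Delta_S\,\}$, the join being intersection in $\Fil\Rel S$. Using $\mathrm{(SU_s)}$ to get $\mathcal F^{\mathrm{op}}\in\mathcal E$ and $\mathrm{(SU_r)}$ to get $\Fg\{\,\Delta_S\,\}\in\mathcal E$, closure of the ideal $\mathcal E$ under finite joins puts $\mathcal S\in\mathcal E$; and $\mathcal S$ is a semiuniformity because it lies below $\Fg\{\,\Delta_S\,\}$ (forcing $\mathrm{(U_r)}$) and is manifestly fixed by $\mathrm{op}$ (giving $\mathrm{(U_s)}$). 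Since $\mathcal F\le\mathcal S$, the semiuniformities contained in $\mathcal E$ form a ceiling, so they generate $\mathcal E$. Conversely, if $\mathcal E$ is generated by semiuniformities, then op-invariance of the generating set yields $\mathcal E^{\mathrm{op}}=\mathcal E$, i.e.\ $\mathrm{(SU_s)}$; and any one semiuniformity generator $\mathcal S\in\mathcal E$ (one exists, since an ideal's generating set is nonempty) satisfies $\Fg\{\,\Delta_S\,\}\le\mathcal S$, whence $\Fg\{\,\Delta_S\,\}\in\mathcal E$ by downward closure, giving $\mathrm{(SU_r)}$.

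For the composition matching, $\mathrm{(SU_t)}$ reads $\mathcal E\circ\mathcal E\le\mathcal E$, and since $\mathcal E\circ\mathcal E=\Ig\{\,\mathcal F\circ\mathcal F'\mid\mathcal F,\mathcal F'\in\mathcal E\,\}$ this says exactly that $\mathcal F\circ\mathcal F'\in\mathcal E$ for all $\mathcal F,\mathcal F'\in\mathcal E$, i.e.\ closure under composition. Combining the two matchings gives $(1)\Leftrightarrow(2)$. Finally, $(2)\Leftrightarrow(3)$: closure under composition trivially gives closure under relational powers, and for the converse I would use that composition of filters is monotone in each argument, so for $\mathcal F,\mathcal F'\in\mathcal E$ we have $\mathcal F,\mathcal F'\le\mathcal F\vee\mathcal F'\in\mathcal E$ and hence $\mathcal F\circ\mathcal F'\le(\mathcal F\vee\mathcal F')\circ(\mathcal F\vee\mathcal F')\in\mathcal E$, so downward closure recovers $\mathcal F\circ\mathcal F'\in\mathcal E$.

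I expect no single hard obstacle: the argument is parallel to the superequivalence case, with ``symmetric relations'' replaced by ``semiuniformities.'' The only real pitfall is the bookkeeping forced by the reverse-inclusion order—correctly reading $\mathrm{(SU_r)}$ as the \emph{presence} of the filter $\Fg\{\,\Delta_S\,\}$, and translating reflexivity and symmetry of a filter into the order relations $\mathcal S\ge\Fg\{\,\Delta_S\,\}$ and $\mathcal S=\mathcal S^{\mathrm{op}}$—together with confirming that $\mathrm{op}$ is an order-automorphism so that ``generated by symmetric generators'' genuinely forces $\mathcal E=\mathcal E^{\mathrm{op}}$.
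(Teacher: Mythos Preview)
The paper states this theorem without proof (as with the parallel theorem for superequivalences), so there is no argument to compare against; your proof is correct and is the natural one. One small wording slip: where you write that $\mathcal S$ ``lies below $\Fg\{\,\Delta_S\,\}$,'' you mean the opposite in the filter order---$\mathcal S\ge\Fg\{\,\Delta_S\,\}$, as you correctly say later---but the mathematics is unaffected.
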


If $f:S\to T$ is a function from $S$ to another set $T$, then there is an inverse image mapping from $\Idl\Fil\Rel T$ to $\Idl\Fil\Rel S$, given by
\[\mathcal E\mapsto f^{-1}(\mathcal E)\overset{\mathrm{def}}=\Ig\{\,f^{-1}(\mathcal F)\mid\mathcal F\in\mathcal E\,\},\]
and a direct image mapping from $\Idl\Fil\Rel S$ to $\Idl\Fil\Rel T$, given by
\[\mathcal E\mapsto f(\mathcal E)\overset{\textrm{def}}=\Ig\{\,f(\mathcal F)\mid\mathcal F\in\mathcal E\,\}.\]
We have $\pair{[\mathcal E\mapsto f(\mathcal E)]}{\mathcal E'\mapsto f^{-1}(\mathcal E')]}:\Idl\Fil\Rel S\rightharpoonup\Idl\Fil\Rel T$.

\begin{definition} If $S$ is a set and $\mathcal E$ is a superuniformity (semisuperuniformity) then we say that $\pair S{\mathcal E}$ is a \emph{superuniform space} (respectively, \emph{semisuperuniform space}).
\end{definition}

\begin{definition}
If $\pair S{\mathcal E}$ and $\pair T{\mathcal E'}$ are superuniform (or semisuperuniform) spaces, a function $f:S\to T$ is \emph{super\-uniform} if the equivalent conditions $\mathcal E\leq f^{-1}(\mathcal E')$, $f(\mathcal E)\leq\mathcal E'$ are satisfied.
\end{definition}

Superuniform spaces, and the superuniform functions between them, form a category in an obvious manner, which we denote by $\SuperUnif$.
The category-theoretic product of superuniform spaces $\pair S{\mathcal E}$ and $\pair T{\mathcal E'}$ is
\[\pair S{\mathcal E}\times\pair T{\mathcal E'}=
\pair{S\times T}{\Ig\{\,\mathcal F\times\mathcal F'\mid\mathcal F\in\mathcal E,
\mathcal F'\in\mathcal E'\,\}}.\]

\begin{example}\label{E:IgU} Let $\pair S{\mathcal U}$ be a uniform space. Then $\pair S{\Ig\{\,\mathcal U\,\}}$ is a superuniform space.
\begin{proof}
$\Ig\{\,\mathcal U\,\}$ is generated by a set of uniformities, namely $\{\,\mathcal U\,\}$, and is closed under composition of filters by $\mathrm{(U_t)}$.
\end{proof}
\end{example}

\begin{example}
\label{E:PhiOff}
Suppose $S$ be a set, $\pair T{\mathcal T}$ a hausdorff topological space, and $f:S\to T$ a function. For each compact subset $C\subseteq T$, and each entourage $U$ of the unique uniformity on $\pair C{\mathcal T|_C}$ giving rise to $\mathcal T|_C$, consider the relation \[\{\,\pair s{s'}\mid f(s)=f(s')\text{ or }f(s),f(t)\in C\text{ and }f(s)\mathrel Uf(t)\,\}.\] These relations form a filter $\mathcal S_C$ of relations on $S$, which is a semiuniformity. The ideal of filters on $S$ generated by the $\mathcal S_C$ is closed under composition of filters, and is thus a superuniformity, which we denote by $\Phi(f,\mathcal T)$.
\end{example}

\subsection{The lattices $\OpSuperUnif S$ and $\OpSemiSuperUnif S$}

We denote the set of superuniformities on a set $S$ by $\OpSuperUnif S$ and the set of semisuperuniformities by $\OpSemiSuperUnif$. They admit complete meet operations given by intersection of ideals, and complete join operations given by 
\[\bigvee_i^{\OpSuperUnif}\mathcal E_i=\Ig^\circ\{\,\bigcup_i\mathcal E_i\,\},\]
and
\[\bigvee_i^{\OpSemiSuperUnif}\mathcal E_i=\Ig\{\,\bigcup_i\mathcal E_i\,\},\]
where by $\Ig^\circ$ of a set  $X$ of filters we mean the ideal generated by the set of all finite compositions of filters in $X$. The lattice $\OpSemiSuperUnif S$ is distributive.

\subsection{Permutability for superuniformities}

\begin{definition}
We say that two superuniformities $\mathcal E$, $\mathcal E'$ on a set $S$ \emph{permute} if
$\mathcal E\circ\mathcal E'=\mathcal E'\circ\mathcal E$,
where $\mathcal E\circ\mathcal E'=\Ig\{\,\mathcal F\circ\mathcal F'\mid
\mathcal F\in\mathcal E\text{ and }\mathcal F'\in\mathcal E'\,\}$.
\end{definition}

\begin{theorem}
Superuniformities $\mathcal E$ and $\mathcal E'$ permute iff $\mathcal E\vee\mathcal E'=\mathcal E\circ\mathcal E'$.
\end{theorem}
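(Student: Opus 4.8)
The plan is to mirror, one level higher, the argument already given for superequivalences, working throughout in the complete lattice $\Idl\Fil\Rel S$ and using only the three superuniformity axioms together with the formal properties of the composition operation $\circ$ on ideals of filters. The two properties of $\circ$ that I would record first are that it is associative and that $(\mathcal E\circ\mathcal E')^{\mathrm{op}}=(\mathcal E')^{\mathrm{op}}\circ\mathcal E^{\mathrm{op}}$; both descend from the corresponding identities $(\mathcal F\circ\mathcal F')^{\mathrm{op}}=(\mathcal F')^{\mathrm{op}}\circ\mathcal F^{\mathrm{op}}$ at the level of filters, which in turn come from $(R\circ R')^{-1}=(R')^{-1}\circ R^{-1}$ at the level of relations, together with the fact that $\Ig$ and $\Fg$ are monotone closure operators. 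I would also record the general inequality $\mathcal E\circ\mathcal E'\leq\mathcal E\vee\mathcal E'$, valid for any two superuniformities: the join contains both $\mathcal E$ and $\mathcal E'$ and satisfies $\mathrm{(SU_t)}$, hence is closed under composition.

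For the ($\Rightarrow$) direction I would assume $\mathcal E\circ\mathcal E'=\mathcal E'\circ\mathcal E$ and show that $\mathcal E\circ\mathcal E'$ is itself a superuniformity containing both $\mathcal E$ and $\mathcal E'$ while being $\leq\mathcal E\vee\mathcal E'$, which forces it to equal the join. It is an ideal of filters by the definition of $\circ$. Reflexivity holds because $\Fg\{\,\Delta\,\}\in\mathcal E$ and $\Fg\{\,\Delta\,\}\in\mathcal E'$ and $\Fg\{\,\Delta\,\}\circ\Fg\{\,\Delta\,\}=\Fg\{\,\Delta\,\}$; the same fact, that $\Fg\{\,\Delta\,\}$ is a two-sided identity for $\circ$, gives $\mathcal E\leq\mathcal E\circ\mathcal E'$ and $\mathcal E'\leq\mathcal E\circ\mathcal E'$. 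Symmetry follows from $(\mathcal E\circ\mathcal E')^{\mathrm{op}}=(\mathcal E')^{\mathrm{op}}\circ\mathcal E^{\mathrm{op}}=\mathcal E'\circ\mathcal E=\mathcal E\circ\mathcal E'$, using symmetry of $\mathcal E,\mathcal E'$ and the permutation hypothesis. Transitivity follows from $(\mathcal E\circ\mathcal E')\circ(\mathcal E\circ\mathcal E')=\mathcal E\circ(\mathcal E'\circ\mathcal E)\circ\mathcal E'=\mathcal E\circ(\mathcal E\circ\mathcal E')\circ\mathcal E'=(\mathcal E\circ\mathcal E)\circ(\mathcal E'\circ\mathcal E')\leq\mathcal E\circ\mathcal E'$, again using permutation and then $\mathrm{(SU_t)}$ for each factor.

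For the ($\Leftarrow$) direction I would assume $\mathcal E\vee\mathcal E'=\mathcal E\circ\mathcal E'$ and run the short chain
\[
\mathcal E\circ\mathcal E'=\mathcal E\vee\mathcal E'=(\mathcal E\vee\mathcal E')^{\mathrm{op}}=(\mathcal E\circ\mathcal E')^{\mathrm{op}}=(\mathcal E')^{\mathrm{op}}\circ\mathcal E^{\mathrm{op}}=\mathcal E'\circ\mathcal E,
\]
which is exactly permutation. The only input here beyond the hypothesis is that the join of two superuniformities is again symmetric, so that $(\mathcal E\vee\mathcal E')^{\mathrm{op}}=\mathcal E\vee\mathcal E'$; this is immediate, since $\mathcal E\vee\mathcal E'$ is a superuniformity and every superuniformity satisfies $\mathrm{(SU_s)}$.

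I expect the only real obstacle to be bookkeeping rather than mathematics: because the ambient lattice is now $\Idl\Fil\Rel S$ rather than $\Idl\Rel S$, I must check carefully that $\circ$ on ideals of filters really is associative, that $\Fg\{\,\Delta\,\}$ (viewed inside the ideals) acts as a two-sided identity for $\circ$, and that $\mathrm{op}$ distributes over $\circ$ with the factors reversed. Each of these is proved by pushing the corresponding relation-level identity up through the two closure operators $\Fg$ and $\Ig$, so once those lemmas are isolated the content is entirely routine and no idea beyond the superequivalence case is needed.
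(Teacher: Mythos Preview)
Your proof is correct and follows essentially the same approach as the paper: both directions exploit $\mathrm{(SU_r)}$, $\mathrm{(SU_s)}$, $\mathrm{(SU_t)}$ and the formal properties of $\circ$ in exactly the way you describe. The only cosmetic difference is in the $(\Leftarrow)$ direction: the paper first obtains the one-sided inequality $\mathcal E'\circ\mathcal E\leq\mathcal E\vee\mathcal E'=\mathcal E\circ\mathcal E'$ and then applies $(-)^{\mathrm{op}}$ to reverse it, whereas you invoke $\mathrm{(SU_s)}$ for the join directly to get both inclusions at once; your version is, if anything, slightly tidier.
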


\begin{proof}
We have $\mathcal E\circ\mathcal E'\leq\mathcal E\vee\mathcal E'$ for any superuniformities $\mathcal E$ and $\mathcal E'$, 
because $\mathcal E\vee\mathcal E'$ contains $\mathcal E$ and $\mathcal E'$ and satisfies $\mathrm{(SU_t)}$. Similarly, $\mathcal E'\circ\mathcal E\leq\mathcal E\vee\mathcal E'$.
On the other hand, if $\mathcal E\circ\mathcal E'$ is a superuniformity, we have $\mathcal E\vee\mathcal E'\leq\mathcal E\circ\mathcal E'$.  For, $\Ig\{\,\Fg\{\,\Delta\,\}\,\}\leq\mathcal E$ by $\mathrm{(SU_r)}$, which implies $\mathcal E'\leq\mathcal E\circ\mathcal E'$ by monadicity, and similarly, $\mathcal E\leq\mathcal E\circ\mathcal E'$.

Now, if $\mathcal E$ and $\mathcal E'$ permute, then $\mathcal E\circ\mathcal E'=\mathcal E'\circ\mathcal E$ is a superuniformity, being an ideal of filters closed under composition, and so, $\mathcal E\vee\mathcal E'\leq\mathcal E\circ\mathcal E'\leq\mathcal E\vee\mathcal E'$.

On the other hand, if $\mathcal E\vee\mathcal E'=\mathcal E\circ\mathcal E'$, then $\mathcal E\circ\mathcal E'$ is a superuniformity, and $\mathcal E'\circ\mathcal E\leq\mathcal E\vee\mathcal E'=\mathcal E\circ\mathcal E'$.
Since $\mathcal E'\circ\mathcal E\leq\mathcal E\circ\mathcal E'$, we have
\begin{align*}
\mathcal E\circ\mathcal E'
&=(\mathcal E'\circ\mathcal E)^{-1}\\
&\leq(\mathcal E\circ\mathcal E')^{-1}\\
&=\mathcal E'\circ\mathcal E.
\end{align*}

\end{proof}

\begin{theorem}
Let $L$ be a sublattice of $\OpSuperUnif S$ for some set $S$. If the elements
of $L$ permute pairwise, then $L$ is modular.
\end{theorem}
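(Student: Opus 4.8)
The plan is to imitate, one level up, the proof just given for the corresponding theorem about $\OpSuperEqv S$: the present statement is obtained from that one by replacing relations with filters of relations, and the lattice $\Idl\Rel S$ with $\Idl\Fil\Rel S$. As in any lattice, modularity reduces to verifying, for $\mathcal E$, $\mathcal E'$, $\mathcal E''\in L$ with $\mathcal E\leq\mathcal E''$, the single inequality
\[(\mathcal E\vee\mathcal E')\wedge\mathcal E''\leq\mathcal E\vee(\mathcal E'\wedge\mathcal E''),\]
the reverse inequality being automatic. First I would invoke pairwise permutability of the elements of $L$ together with the preceding theorem to rewrite $\mathcal E\vee\mathcal E'=\mathcal E\circ\mathcal E'$, so that the left-hand side becomes $(\mathcal E\circ\mathcal E')\wedge\mathcal E''$.

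Next I would exhibit a convenient description of this ideal. Since $\mathcal E$ and $\mathcal E'$ are ideals, hence closed under finite joins, the generating family $\{\,\mathcal F\circ\mathcal F'\mid\mathcal F\in\mathcal E,\ \mathcal F'\in\mathcal E'\,\}$ of $\mathcal E\circ\mathcal E'$ is directed: given two composites, $\mathcal F_1\vee\mathcal F_2\in\mathcal E$, $\mathcal F_1'\vee\mathcal F_2'\in\mathcal E'$, and monotonicity of $\circ$ yield a common upper bound. Consequently $(\mathcal E\circ\mathcal E')\wedge\mathcal E''$ is generated, as an ideal, by the filters $(\mathcal F\circ\mathcal F')\wedge\mathcal F''$ with $\mathcal F\in\mathcal E$, $\mathcal F'\in\mathcal E'$, $\mathcal F''\in\mathcal E''$; since ideals are downward closed, it suffices to place each such filter in $\mathcal E\vee(\mathcal E'\wedge\mathcal E'')$.

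The heart of the argument is the filter-level shifting estimate
\[(\mathcal F\circ\mathcal F')\wedge\mathcal F''\leq\mathcal F\circ\big(\mathcal F'\wedge(\mathcal F^{\mathrm{op}}\circ\mathcal F'')\big),\]
which is the exact analog of the relational estimate $(R\circ R')\cap R''\subseteq R\circ(R'\cap(R^{\mathrm{op}}\circ R''))$ used in the superequivalence proof. Recalling that filters are ordered by reverse inclusion, this inequality means the right-hand filter is a subset of the left-hand one, so it suffices to show every basic entourage of the right-hand filter lies in the left-hand filter. Such an entourage has the form $U\circ W$ with $U\in\mathcal F$ and $W=U'\cap(U_0^{\mathrm{op}}\circ U'')$, where $U'\in\mathcal F'$, $U_0\in\mathcal F$, $U''\in\mathcal F''$. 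I would check that it contains the basic entourage $((U\cap U_0)\circ U')\cap U''$ of the left-hand filter: if $a\mathrel{((U\cap U_0)\circ U')\cap U''}c$, choose $b$ with $a\mathrel{U\cap U_0}b\mathrel{U'}c$; then $a\mathrel Ub$, while $b\mathrel{U'}c$ and, since $a\mathrel{U_0}b$ and $a\mathrel{U''}c$, also $b\mathrel{U_0^{\mathrm{op}}\circ U''}c$, whence $b\mathrel Wc$ and $a\mathrel{U\circ W}c$. The one wrinkle beyond the relational case is that the two occurrences of $\mathcal F$ force the intersection $U\cap U_0$, which is harmless because filters are closed under finite meets.

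Finally I would verify the bookkeeping that lands the right-hand filter in the desired ideal: by $\mathrm{(SU_s)}$ we have $\mathcal F^{\mathrm{op}}\in\mathcal E\leq\mathcal E''$, so $\mathcal F^{\mathrm{op}}\circ\mathcal F''\in\mathcal E''\circ\mathcal E''\leq\mathcal E''$ by $\mathrm{(SU_t)}$; since ideals are downward closed, $\mathcal F'\wedge(\mathcal F^{\mathrm{op}}\circ\mathcal F'')$ lies below both $\mathcal F'\in\mathcal E'$ and $\mathcal F^{\mathrm{op}}\circ\mathcal F''\in\mathcal E''$, hence belongs to $\mathcal E'\wedge\mathcal E''$. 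Therefore $\mathcal F\circ\big(\mathcal F'\wedge(\mathcal F^{\mathrm{op}}\circ\mathcal F'')\big)\in\mathcal E\circ(\mathcal E'\wedge\mathcal E'')\leq\mathcal E\vee(\mathcal E'\wedge\mathcal E'')$, which together with the shifting estimate gives $(\mathcal F\circ\mathcal F')\wedge\mathcal F''\in\mathcal E\vee(\mathcal E'\wedge\mathcal E'')$ and completes the inequality. I expect the main obstacle to be precisely this filter-level shifting estimate, namely keeping the quantifiers over entourages and the reverse-inclusion ordering of filters straight; once that is set up correctly, the remainder is a faithful transcription of the superequivalence argument.
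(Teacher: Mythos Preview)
Your proof is correct and follows essentially the same approach as the paper's own proof: reduce to the modular inequality, use permutability to rewrite $\mathcal E\vee\mathcal E'=\mathcal E\circ\mathcal E'$, generate the left-hand ideal by filters $(\mathcal F\circ\mathcal F')\wedge\mathcal F''$, and dominate each such filter by $\mathcal F\circ\bigl(\mathcal F'\wedge(\mathcal F^{\mathrm{op}}\circ\mathcal F'')\bigr)$ via the same entourage-level element chase. Your write-up is in fact more careful than the paper's in spelling out why the generating family is directed and why $\mathcal F^{\mathrm{op}}\circ\mathcal F''\in\mathcal E''$.
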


\begin{proof}
 It suffices to show, given $\mathcal E$, $\mathcal E'$, $\mathcal E''\in\OpSuperUnif A$ such that $\mathcal E\leq\mathcal E''$, that
\[(\mathcal E\vee\mathcal E')\wedge\mathcal E''\leq\mathcal E\vee(\mathcal E'\wedge\mathcal E''),\]
as the opposite inequality holds in any lattice. The left-hand side is generated by filters $(\mathcal F\circ\mathcal F')\cap\mathcal F''$ such that $\mathcal F\in\mathcal E$, $\mathcal F'\in\mathcal E'$, and $\mathcal F''\in\mathcal E''$ so let $(\mathcal F\circ\mathcal F')\cap\mathcal F''$ be such a filter. Consider $\mathcal F\circ((\mathcal F'\wedge(\mathcal F^{\textbf{op}}\circ\mathcal F''))\in\mathcal E\circ(\mathcal E'\wedge\mathcal E'')$. This filter is generated by relations
$F\circ(F'\cap({\tilde F}^{\textbf{op}}\circ F''))$ such that $F_1$, $\tilde F\in\mathcal F$, $F'\in\mathcal F'$, and $F''\in\mathcal F''$. Consider the filter
$((F\cap\tilde F)\circ F')\cap F''\in(\mathcal F\circ\mathcal F')\wedge\mathcal F''$. If $a\mathrel{F\cap\tilde F}b\mathrel{F'}c$ and $a\mathrel{F''}c$, then 
$a\mathrel Fb$ and $b\mathrel{F'\cap({\tilde F}^{\text{op}}\circ F''))}c$, showing that $(\mathcal F\circ\mathcal F')\wedge\mathcal F''\leq\mathcal F\circ(\mathcal F'\wedge(\mathcal F^{\text{op}}\circ\mathcal F''))$.
\end{proof}

\begin{theorem} For any set $S$, $\OpSuperUnif S$ and $\OpSemiSuperUnif S$ are algebraic lattices.
\end{theorem}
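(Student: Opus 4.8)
For any set $S$, $\OpSuperUnif S$ and $\OpSemiSuperUnif S$ are algebraic lattices.

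The plan is to exhibit both lattices as closed under directed joins taken in the ambient complete lattice $\Idl\Fil\Rel S$, and then to identify the compact elements, showing that every superuniformity (semisuperuniformity) is a directed join of compact ones. I would model the argument on the analogous fact for $\OpSuperEqv S$, where the crucial point is that the defining closure conditions are expressible in terms of finite data. For $\OpSemiSuperUnif S$ the conditions $\mathrm{(SU_r)}$ and $\mathrm{(SU_s)}$ are easy: reflexivity asks only that the single filter $\Fg\{\,\Delta_S\,\}$ lie in the ideal, and symmetry is preserved under arbitrary unions of ideals since $(\,)^{\mathrm{op}}$ is an order-automorphism commuting with directed union. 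Thus a directed join of semisuperuniformities, computed as $\Ig\{\,\bigcup_i\mathcal E_i\,\}$, is again a semisuperuniformity, so $\OpSemiSuperUnif S$ is closed under directed joins.

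Next I would treat the transitivity condition $\mathrm{(SU_t)}$, which is the only extra requirement for the full lattice $\OpSuperUnif S$. The key observation is that composition of filters distributes over directed joins of ideals: if $\{\mathcal E_i\}$ is a directed family of superuniformities, then any filter in $(\bigvee_i\mathcal E_i)\circ(\bigvee_i\mathcal E_i)$ is dominated by a composite $\mathcal F\circ\mathcal F'$ with $\mathcal F,\mathcal F'$ each below a finite join of the $\mathcal E_i$, hence by directedness both lie in a single $\mathcal E_j$; then $\mathcal F\circ\mathcal F'\leq\mathcal E_j\circ\mathcal E_j\leq\mathcal E_j\leq\bigvee_i\mathcal E_i$. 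This shows the directed join of superuniformities is transitive, so $\OpSuperUnif S$ is closed under directed joins in $\Idl\Fil\Rel S$.

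It remains to produce enough compact elements. The natural candidate for a compact superuniformity is $\Ig\{\,\mathcal U\,\}$ where $\mathcal U$ is a \emph{finitely generated} uniformity, i.e.\ one of the form $\Ug\Fg\{\,U_1,\dots,U_n\,\}$ for finitely many relations $U_k$; by Example~\ref{E:IgU} such an $\Ig\{\,\mathcal U\,\}$ is indeed a superuniformity. I would show every superuniformity $\mathcal E$ is the directed join of the principal ideals $\Ig\{\,\mathcal U\,\}$ as $\mathcal U$ ranges over the finitely generated members of the single filter-ideal $\mathcal E$, using that $\mathcal E$ is an ideal (closed under finite joins) and that each filter in $\mathcal E$ is the join of finitely generated subfilters it contains. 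For semisuperuniformities the same works with symmetrized generators. The main obstacle I anticipate is verifying compactness of $\Ig\{\,\mathcal U\,\}$ honestly: one must check that if $\Ig\{\,\mathcal U\,\}\leq\bigvee_i\mathcal E_i=\Ig^\circ\{\,\bigcup_i\mathcal E_i\,\}$, then $\mathcal U$ already lies below a \emph{finite} composition drawn from finitely many $\mathcal E_i$, and then package that finite composite back into a single member of one $\mathcal E_j$ using transitivity $\mathrm{(SU_t)}$ and directedness — the bookkeeping between the ideal-level join $\Ig^\circ$ and the filter-level order is where care is needed, and where the contrast with the non-algebraic $\Fil\Rel S$ (Example~\ref{E:UnifNonMod}) becomes visible.
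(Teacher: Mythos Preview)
The paper states this theorem without proof. The routine argument is presumably: $\Idl L$ is algebraic for any lattice $L$, with compacts the principal ideals, and a closure operator on an algebraic lattice that preserves directed joins has an algebraic lattice of closed elements. Your Step~1 (closure of $\OpSuperUnif S$ and $\OpSemiSuperUnif S$ under directed joins in $\Idl\Fil\Rel S$) is correct and is exactly what is needed to see that the corresponding closure operators are algebraic; that already finishes the proof.

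The gap is in your Step~2. The principal ideals $\Ig\{\mathcal U\}$ for uniformities $\mathcal U$ are indeed compact, but they do not generate all superuniformities. Take $S=\mathbb N$, $R=\{\pair kl:|k-l|\le 1\}$, and $\mathcal S=\Fg\{R\}$; the superuniformity $\mathcal E=\Ig^\circ\{\mathcal S\}=\bigcup_{n\ge 1}\Ig\{\Fg\{R^{\circ n}\}\}$ generated by $\mathcal S$ is compact, yet $\mathcal S\in\mathcal E$ lies below no uniformity in $\mathcal E$: if $\mathcal S\le\mathcal U\in\mathcal E$ then some $R^{\circ n}\in\mathcal U$ while every element of $\mathcal U$ contains $R$, and iterating $\mathrm{(U_t)}$ forces $R^{\circ 2^k}\subseteq R^{\circ n}$ for all $k$, which is impossible. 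So $\mathcal E$ is not a directed join of your candidates. The correct compacts of $\OpSuperUnif S$ are the superuniformities $\Ig^\circ\{\mathcal F\vee\mathcal F^{\mathrm{op}}\vee\Fg\{\Delta\}\}$ generated by a single filter $\mathcal F$, and these need not be principal ideals. (Your ``finitely generated'' restriction is separately unnecessary: $\Ig\{\mathcal U\}$ is compact in $\Idl\Fil\Rel S$ for \emph{every} filter $\mathcal U$.)
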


\begin{example}\label{E:Z} Let $\pair S{\mathcal I}$ be a superequivalence space. Then
\[Z(\mathcal I)\overset{\textrm{def}}=\Ig\{\,\Fg\{\,J\,\}\mid J\in\mathcal I\,\}\]
is a superuniformity on $S$. The mapping $\pair S{\mathcal I}\mapsto\pair S{Z(\mathcal I)}$ is a lattice homomorphism which preserves arbitrary joins.
\begin{proof}
By monotonicity of the mapping $\mathcal I\mapsto Z(\mathcal I)$, we have
$Z(\mathcal I\cap\mathcal I')\leq Z(\mathcal I)\wedge Z(\mathcal I')$. Let $\mathcal F\in Z(\mathcal I)\wedge Z(\mathcal I')$. Then $\mathcal F\leq\Fg\{\,J\,\}$ for some $J\in\mathcal I$ and $\mathcal F\leq\Fg\{\,J'\,\}$ for some $J'\in\mathcal I'$. That is, there is an $R\in\mathcal F$ such that $R\subseteq J\cap J'$. Then since $J\cap J'\in\mathcal I\cap\mathcal I'$, we have $\mathcal F\in Z(\mathcal I\cap\mathcal I')$.

Again by monotonicity, we have
$\bigvee_iZ(\mathcal I_i)\leq Z(\bigvee_i\mathcal I_i)$.
Suppose $\mathcal F\in Z(\bigvee_i\mathcal I_i)$. Then for some $J_1$, $\ldots$, $J_n\in\mathcal I_{i_j}$, $\mathcal F\leq\Fg\{\,K_1\circ\ldots\circ J_n\,\}$. However,
$\Fg\{\,J_1\circ\ldots\circ J_n\,\}=\Fg\{\,J_1\,\}\circ\ldots\circ\Fg\{\,J_n\,\}$. Thus, $\mathcal F\in\bigvee_iZ(\mathcal I_i)$.
\end{proof}
\end{example}

\begin{theorem}
\label{T:JoinIsoThm} If $\pi:A\to B$ is an onto function, and $\{\,\mathcal E_i\,\}_{i\in I}$ is a tuple of superuniformities on $B$, then
\[\pi^{-1}(\bigvee_i\mathcal E_i)=\bigvee_i\pi^{-1}(\mathcal E_i).\]
\end{theorem}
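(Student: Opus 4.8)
The plan is to run the argument of Lemma~\ref{T:InversePi} one level up, replacing relations by filters of relations and the operator $\Ig^\circ$ on $\Idl\Rel S$ by the corresponding operator on $\Idl\Fil\Rel S$. The inequality
\[\bigvee_i\pi^{-1}(\mathcal E_i)\leq\pi^{-1}\Bigl(\bigvee_i\mathcal E_i\Bigr)\]
is immediate: each $\mathcal E_i\leq\bigvee_i\mathcal E_i$, and the inverse-image map $\mathcal E\mapsto\pi^{-1}(\mathcal E)$ is monotone (it is the left adjoint of the Galois connection displayed before Theorem~\ref{T:JoinIsoThm}), so each $\pi^{-1}(\mathcal E_i)\leq\pi^{-1}(\bigvee_i\mathcal E_i)$ and hence so is their join.

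For the reverse inclusion the crucial preliminary step is to establish, for filters $\mathcal F$, $\mathcal F'$ of relations on $B$ and $\pi$ onto, the filter-level identity
\[\pi^{-1}(\mathcal F\circ\mathcal F')=\pi^{-1}(\mathcal F)\circ\pi^{-1}(\mathcal F').\]
I would reduce this to the relational identity $\pi^{-1}(R\circ R')=\pi^{-1}(R)\circ\pi^{-1}(R')$. There the inclusion $\supseteq$ holds for any $\pi$, while $\subseteq$ is exactly where surjectivity is needed: given a witness $\pi(a)\mathrel R y\mathrel{R'}\pi(c)$ one must choose a preimage $b$ with $\pi(b)=y$ to produce $a\mathrel{\pi^{-1}(R)}b\mathrel{\pi^{-1}(R')}c$. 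Passing to filters, both $\pi^{-1}(\mathcal F\circ\mathcal F')$ and $\pi^{-1}(\mathcal F)\circ\pi^{-1}(\mathcal F')$ then admit the common base $\{\,\pi^{-1}(R)\circ\pi^{-1}(R')\mid R\in\mathcal F,\ R'\in\mathcal F'\,\}$, so they coincide; a routine induction extends this to $n$-fold compositions.

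With this identity in hand the reverse inclusion follows the pattern of Lemma~\ref{T:InversePi}. Let $\mathcal F\in\pi^{-1}(\bigvee_i\mathcal E_i)=\pi^{-1}\bigl(\Ig^\circ\{\,\bigcup_i\mathcal E_i\,\}\bigr)$. Unwinding the definitions of $\pi^{-1}$ and of $\Ig^\circ$—and using that $\mathrm{(SU_r)}$ puts $\Fg\{\,\Delta\,\}$ in each $\mathcal E_i$, which makes the relevant set of finite compositions directed so that membership reduces to lying below a \emph{single} finite composition—there are filters $\mathcal F_1,\ldots,\mathcal F_n$ with $\mathcal F_j\in\mathcal E_{\iota_j}$ for some $\iota_j\in I$ such that
\[\mathcal F\leq\pi^{-1}(\mathcal F_1\circ\cdots\circ\mathcal F_n)=\pi^{-1}(\mathcal F_1)\circ\cdots\circ\pi^{-1}(\mathcal F_n).\]
Each $\pi^{-1}(\mathcal F_j)$ lies in $\pi^{-1}(\mathcal E_{\iota_j})\subseteq\bigcup_i\pi^{-1}(\mathcal E_i)$, so the right-hand composition belongs to $\Ig^\circ\{\,\bigcup_i\pi^{-1}(\mathcal E_i)\,\}=\bigvee_i\pi^{-1}(\mathcal E_i)$; as the latter is a downward-closed ideal, $\mathcal F$ belongs to it as well.

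I expect the only genuine obstacle to be the careful verification of the filter identity together with keeping the two orderings straight—filters are ordered by reverse inclusion, ideals of filters by inclusion—so that each ``$\leq$'' is applied in the correct direction. The surjectivity hypothesis enters precisely, and only, in the $\subseteq$ half of the relational identity.
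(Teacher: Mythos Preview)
Your argument is correct and follows the paper's proof essentially verbatim: both directions are handled identically, and the key step $\pi^{-1}(\mathcal F_1\circ\cdots\circ\mathcal F_n)=\pi^{-1}(\mathcal F_1)\circ\cdots\circ\pi^{-1}(\mathcal F_n)$ is exactly what the paper uses (without spelling out, as you do, that surjectivity is what makes the relational identity $\pi^{-1}(R\circ R')=\pi^{-1}(R)\circ\pi^{-1}(R')$ work). One small slip: in your parenthetical you call $\mathcal E\mapsto\pi^{-1}(\mathcal E)$ the \emph{left} adjoint, but in the Galois connection displayed before the theorem it is the \emph{right} adjoint; were it the left adjoint it would preserve joins automatically and there would be nothing to prove---fortunately you only use monotonicity there, which holds for either adjoint.
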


\begin{proof}
($\geq:$) By monotonicity of inverse image.

($\leq:$) Let $\mathcal F\in\pi^{-1}\Ig^\circ\{\,\bigcup_i\mathcal E_i\,\}$. Then for some finite tuple $\iota_1$, $\ldots$, $\iota_n$ where each $\iota_j\in I$, we have
\begin{align*}
\mathcal F&\leq\pi^{-1}(\mathcal F_1\circ\ldots\circ\mathcal F_n)\textrm{ where each }\mathcal F_j\in\mathcal E_{\iota_j}\\
&=\pi^{-1}(\mathcal F_1)\circ\ldots\circ\pi^{-1}(\mathcal F_n)\\
&\in\Ig^\circ\{\,\bigcup_i\pi^{-1}(\mathcal E_i)\,\}\\
&=\bigvee_i\pi^{-1}(\mathcal E_i).
\end{align*}
Thus, $\mathcal F\in\bigvee_i\pi^{-1}(\mathcal E_i)$.
\end{proof}

Algebraicity of $\OpSuperUnif S$ is again crucial for the proof of the following:

\begin{theorem}
 The category $\SuperUnif$ is cartesian closed.
\end{theorem}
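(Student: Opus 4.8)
The plan is to imitate the proof of Theorem~\ref{T:SECC} essentially line for line, replacing $\Idl\Rel$ by $\Idl\Fil\Rel$, the operator $\SEg$ by $\SUg$ (the superuniformity generated by an ideal of filters, defined by $\SUg\mathcal E=\bigwedge_{\mathcal E\leq\mathcal E'\in\SuperUnif S}\mathcal E'$, which makes sense since $\OpSuperUnif S$ is closed under arbitrary meets), Lemma~\ref{T:InversePi} by Theorem~\ref{T:JoinIsoThm}, and algebraicity of $\OpSuperEqv S$ by the just-established algebraicity of $\OpSuperUnif S$. Starting from the cartesian-closedness adjunction $\alpha^b:\Set(a\times b,c)\cong\Set(a,c^b)$ of $\Set$, I would set, for objects $\pair b{\mathcal E'}$ and $\pair c{\mathcal E''}$ of $\SuperUnif$, the function space $\pair c{\mathcal E''}^{\pair b{\mathcal E'}}=\pair{c^b}{(\mathcal E'')^{\mathcal E'}}$ with
\[(\mathcal E'')^{\mathcal E'}=\bigvee_{f}\SUg\bigl(\alpha^b(f)(\mathcal E)\bigr),\]
the join ranging over all $f\in\SuperUnif(\pair a{\mathcal E}\times\pair b{\mathcal E'},\pair c{\mathcal E''})$, and the transpose by $\alpha^{\pair b{\mathcal E'}}(f)=\alpha^b(f)$. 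The unit $\eta$ and counit $\varepsilon$ would be taken to be the underlying $\Set$ unit and counit, so that naturality and the two triangle identities are inherited automatically and only need to be carried out at the level of underlying sets.

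Functoriality of the function space in $\pair c{\mathcal E''}$ follows exactly as in the superequivalence case: given $g:\pair c{\mathcal E''}\to\pair{c'}{\mathcal E'''}$, since forward image preserves joins it suffices to check $g^b(\alpha^b(f)(\mathcal E))\leq\alpha^b(g\circ f)(\mathcal E)$, which is immediate from the $\Set$-naturality $g^b\circ\alpha^b(f)=\alpha^b(g\circ f)$. The unit $\eta^b_a=\alpha^b(1_{a\times b})$ lies in the required hom-set by the same identity, and for a single $f$ the triangle identity $\varepsilon^b_c\circ(\alpha^b(f)\times 1_b)=f$ in $\Set$, together with superuniformity of $f$, shows that $\varepsilon^b_c$ carries the product generator built from $\SUg(\alpha^b(f)(\mathcal E))$ and $\mathcal E'$ into $\mathcal E''$.

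The one genuinely nontrivial point, and the step I expect to be the main obstacle, is to upgrade this ``one $f$ at a time'' estimate to the statement that $\varepsilon^{\pair b{\mathcal E'}}_{\pair c{\mathcal E''}}$ is itself superuniform, i.e. that
\[\varepsilon^b_c\Bigl((\mathcal E'')^{\mathcal E'}\times\mathcal E'\Bigr)\leq\mathcal E''.\]
For this I would first verify that the family $\{\,\SUg(\alpha^b(f)(\mathcal E))\,\}_f$ is directed: given $f$ and $\hat f$ with sources $\pair a{\mathcal E}$ and $\pair{\hat a}{\hat{\mathcal E}}$, the copairing $\alpha^b(f)\cup\alpha^b(\hat f)$ is a superuniform map out of the coproduct $\pair a{\mathcal E}\coprod\pair{\hat a}{\hat{\mathcal E}}$ whose image dominates both $\alpha^b(f)(\mathcal E)$ and $\alpha^b(\hat f)(\hat{\mathcal E})$, and $\SUg$ is monotone. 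Then, because $\Idl\Fil\Rel(c^b\times b)$ is algebraic, hence meet-continuous, the finite meet with $(\pi')^{-1}(\mathcal E')$ commutes with this directed join; combining that with Theorem~\ref{T:JoinIsoThm} (inverse image along the surjective projections $\pi$, $\pi'$ preserves joins) gives
\[(\mathcal E'')^{\mathcal E'}\times\mathcal E'=\bigvee_f\Bigl(\pi^{-1}\bigl(\SUg(\alpha^b(f)(\mathcal E))\bigr)\wedge(\pi')^{-1}(\mathcal E')\Bigr),\]
so that superuniformity of the product is witnessed termwise and the single-$f$ estimate of the previous paragraph completes the verification.

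Finally, since on underlying sets $\eta^{\pair b{\mathcal E'}}$ and $\varepsilon^{\pair b{\mathcal E'}}$ are literally $\eta^b$ and $\varepsilon^b$, they inherit naturality and the two triangle equations from the $\Set$ data; by \cite[Theorem~IV.1.2]{macl} this yields the adjunction $\alpha^{\pair b{\mathcal E'}}$ and hence cartesian closedness. I expect no essentially new difficulty beyond the superequivalence case: each step is the filter-level shadow of the argument for $\SuperEqv$, and the lone pressure point is again the meet-continuity-plus-directedness interchange in the last display, which is exactly where algebraicity of $\OpSuperUnif S$ is indispensable.
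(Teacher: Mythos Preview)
Your proposal is correct and is exactly the approach the paper takes: the paper's proof consists of the single sentence ``The proof is almost identical to the proof that $\SuperEqv$ is cartesian-closed (Theorem~\ref{T:SECC}),'' and the detailed version (visible in a commented-out block) matches your outline point for point, including the directedness argument, the use of Theorem~\ref{T:JoinIsoThm} in place of Lemma~\ref{T:InversePi}, and the appeal to algebraicity/meet-continuity of $\Idl\Fil\Rel(c^b\times b)$ to verify superuniformity of the counit.
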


\begin{proof} The proof is almost identical to the proof that $\SuperEqv$ is cartesian-closed (Theorem~\ref{T:SECC}).
\end{proof}

\subsection{Compatible superuniformities}

\begin{definition}
A superuniformity $\mathcal E$ on an algebra $A$ is \emph{compatible} if for every $n$-ary basic operation $w$, for every $n$, we have
$w(\mathcal F_1,\ldots,\mathcal F_n)\overset{\textrm{def}}=\Fg\{\,w(F_1,\ldots,F_n)\mid F_1\in\mathcal F_1,\ldots,F_n\in\mathcal F_n\,\}\in\mathcal E$ for all $\mathcal F_1$, $\ldots$, $\mathcal F_n\in\mathcal E$.
\end{definition}

\begin{remark} In other words, a superuniformity $\mathcal E$ on $A$ is compatible with an $n$-ary operation $w$ iff $w$ is superuniform as a function from $\pair A{\mathcal E}^n$ to $\pair A{\mathcal E}$.
\end{remark}

\begin{lemma}
\label{T:RelationsLemma}
If $w$ is an $n$-ary operation on a set $S$, and $\mathcal F_1$, $\ldots$, $\mathcal F_n$, $\mathcal F'$ are filters of relations on $S$, then for each $j$ such that $1\leq j\leq n$,
\begin{align*}w(\mathcal F_1,&\ldots,\mathcal F_{j-1},\mathcal F_j\circ\mathcal F',\mathcal F_{j+1},\ldots,\mathcal F_n)\\&\leq w(\mathcal F_1,\ldots,\mathcal F_n)\circ w(\Fg\{\,\Delta\,\},\ldots,\Fg\{\,\Delta\,\},\mathcal F',\Fg\{\,\Delta\,\},\ldots,\Fg\{\,\Delta\,\}).\end{align*}
\end{lemma}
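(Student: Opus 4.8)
The plan is to reduce the assertion about filters to a single inclusion of relations and then propagate it through bases. Recall that filters are ordered by reverse inclusion, so the claimed inequality $\leq$ is exactly the set-theoretic containment of the right-hand filter inside the left-hand one; thus I must show that every relation lying in $w(\mathcal F_1,\ldots,\mathcal F_n)\circ w(\Fg\{\Delta\},\ldots,\mathcal F',\ldots,\Fg\{\Delta\})$ already lies in $w(\mathcal F_1,\ldots,\mathcal F_j\circ\mathcal F',\ldots,\mathcal F_n)$.

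First I would record convenient bases. Since $w$ is monotone in each argument, $w(\mathcal F_1,\ldots,\mathcal F_n)$ has the base $\{\,w(F_1,\ldots,F_n)\mid F_i\in\mathcal F_i\,\}$, while the factor $w(\Fg\{\Delta\},\ldots,\mathcal F',\ldots,\Fg\{\Delta\})$ reduces, using $\Delta\subseteq G$ for every $G\in\Fg\{\Delta\}$, to the base $\{\,w(\Delta,\ldots,\Delta,F',\Delta,\ldots,\Delta)\mid F'\in\mathcal F'\,\}$ with $F'$ in the $j$-th slot. Likewise $\mathcal F_j\circ\mathcal F'$ has base $\{\,G_j\circ G'\mid G_j\in\mathcal F_j,\ G'\in\mathcal F'\,\}$, so the left-hand filter has the base $\{\,w(F_1,\ldots,G_j\circ G',\ldots,F_n)\,\}$.

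The heart of the argument is the pointwise inclusion
\[w(F_1,\ldots,G_j\circ G',\ldots,F_n)\subseteq w(F_1,\ldots,G_j,\ldots,F_n)\circ w(\Delta,\ldots,G',\ldots,\Delta),\]
where $G_j$ sits in the $j$-th slot on the right and $G'$ in the $j$-th slot of the second factor. To prove it, take a pair $\pair ac$ in the left side, witnessed by $x_i\mathrel{F_i}y_i$ for $i\neq j$ and $x_j\mathrel{G_j}z_j\mathrel{G'}y_j$, with $a=w(x_1,\ldots,x_n)$ and $c=w(y_1,\ldots,y_n)$. The key choice is the intermediate point $b=w(y_1,\ldots,y_{j-1},z_j,y_{j+1},\ldots,y_n)$: then $\pair ab$ lies in $w(F_1,\ldots,G_j,\ldots,F_n)$ (using $x_i\mathrel{F_i}y_i$ off the $j$-th coordinate and $x_j\mathrel{G_j}z_j$ on it), and $\pair bc$ lies in $w(\Delta,\ldots,G',\ldots,\Delta)$ (using $y_i\mathrel\Delta y_i$ off the $j$-th coordinate and $z_j\mathrel{G'}y_j$ on it), so $\pair ac$ factors through the composite. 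The one subtle point is that $z_j$ must be placed against the $y$'s rather than the $x$'s, precisely because the relations $F_i$ need not be reflexive; this is why the seemingly more natural intermediate point fails.

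Finally I would assemble the filter inequality. Given any relation $r$ in the right-hand filter, it contains some basic composite $w(H_1,\ldots,H_n)\circ w(\Delta,\ldots,H',\ldots,\Delta)$ with $H_i\in\mathcal F_i$ and $H'\in\mathcal F'$. Applying the pointwise inclusion with $F_i=H_i$, $G_j=H_j$, $G'=H'$ produces the basic relation $w(H_1,\ldots,H_j\circ H',\ldots,H_n)$ of the left-hand filter contained in that composite, hence contained in $r$. Since filters are upward closed, $r$ belongs to the left-hand filter, which is exactly the required inequality. No congruence-modularity or special structure on $w$ enters; the argument is valid for an arbitrary operation on a set, and I do not expect any genuine obstacle beyond the bookkeeping of the intermediate point and keeping the reverse-inclusion convention straight.
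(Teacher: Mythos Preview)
Your proof is correct and follows essentially the same approach as the paper: both identify bases for the two filters and reduce the inequality to the pointwise inclusion $w(R_1,\ldots,R_j\circ R',\ldots,R_n)\subseteq w(R_1,\ldots,R_n)\circ w(\Delta,\ldots,R',\ldots,\Delta)$. Your treatment is in fact more detailed than the paper's, which states the bases and the pointwise inclusion without constructing the intermediate point explicitly.
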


\begin{proof} The left-hand side has a base of relations of the form
\[w(R_1,\ldots,R_{j-1},R_j\circ R',R_{j+1},\ldots,R_n),\]
and the right-hand side has a base of relations of the form
\[w(R_1,\ldots,R_n)\circ w(\Delta,\ldots,\Delta,F',\Delta,\ldots,\Delta),\]
where $F_i\in \mathcal F_j$ and $R'\in\mathcal R'$.
However,
\[w(R_1,\ldots,R_{j-1},R_j\circ R',R_{j+1},\ldots,R_n)\\
\subseteq w(R_1,\ldots,R_nn)\circ w(\Delta,\ldots,\Delta,R',\Delta\ldots,R_n).
\]
\end{proof}

\begin{theorem} The meet and join of tuples of superuniformities compatible with an operation $w$ are compatible with $w$.
\end{theorem}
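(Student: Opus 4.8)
The plan is to follow the proof of the analogous statement for superequivalences, handling the meet by a direct argument and reducing the join to the single-coordinate substitutions controlled by the compatibility hypothesis via Lemma~\ref{T:RelationsLemma}.

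For the meet, recall that $\bigwedge_i\mathcal E_i=\bigcap_i\mathcal E_i$. If $\mathcal F_1,\ldots,\mathcal F_n\in\bigcap_i\mathcal E_i$, then each $\mathcal F_j$ lies in every $\mathcal E_i$, so compatibility of each $\mathcal E_i$ with $w$ gives $w(\mathcal F_1,\ldots,\mathcal F_n)\in\mathcal E_i$ for all $i$, whence $w(\mathcal F_1,\ldots,\mathcal F_n)\in\bigcap_i\mathcal E_i$. This is immediate and parallels the superequivalence case exactly.

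For the join, recall $\bigvee_i\mathcal E_i=\Ig^\circ\{\,\bigcup_i\mathcal E_i\,\}$, the ideal of all filters below a finite composition of filters drawn from the $\mathcal E_i$. Given $\mathcal F_1,\ldots,\mathcal F_n\in\bigvee_i\mathcal E_i$, I would first pick, for each $j$, a bound $\mathcal F_j\leq\mathcal F_{j,1}\circ\cdots\circ\mathcal F_{j,m_j}$ with each factor $\mathcal F_{j,k}\in\mathcal E_{i(j,k)}$. Since $w$ is monotone in each argument (a larger base yields a larger filter, hence a smaller one in the reverse-inclusion order), it suffices to bound $w(\mathcal F_{1,1}\circ\cdots\circ\mathcal F_{1,m_1},\ldots,\mathcal F_{n,1}\circ\cdots\circ\mathcal F_{n,m_n})$ by a finite composition of filters from $\bigcup_i\mathcal E_i$. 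The engine is Lemma~\ref{T:RelationsLemma}, applied repeatedly: each application peels one factor $\mathcal F'$ out of one coordinate, replacing $w(\ldots,\mathcal F_j\circ\mathcal F',\ldots)$ by the composition of $w(\ldots,\mathcal F_j,\ldots)$ with the single-coordinate filter $w(\Fg\{\,\Delta\,\},\ldots,\mathcal F',\ldots,\Fg\{\,\Delta\,\})$. Using the fact that $\Fg\{\,\Delta\,\}$ is a two-sided identity for composition of filters, so that $\mathcal F_{j,1}=\Fg\{\,\Delta\,\}\circ\mathcal F_{j,1}$, I can peel every coordinate all the way down, leaving the residual $w(\Fg\{\,\Delta\,\},\ldots,\Fg\{\,\Delta\,\})=\Fg\{\,\Delta\,\}$ and exhibiting $w(\mathcal F_1,\ldots,\mathcal F_n)$ as bounded above by a finite composition of single-coordinate filters $w(\Fg\{\,\Delta\,\},\ldots,\mathcal F_{j,k},\ldots,\Fg\{\,\Delta\,\})$.

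To finish, I observe that each such single-coordinate filter lies in $\mathcal E_{i(j,k)}$: that superuniformity contains $\Fg\{\,\Delta\,\}$ by $\mathrm{(SU_r)}$, contains $\mathcal F_{j,k}$ by choice, and is compatible with $w$, so $w(\Fg\{\,\Delta\,\},\ldots,\mathcal F_{j,k},\ldots,\Fg\{\,\Delta\,\})$ is a value of $w$ on members of $\mathcal E_{i(j,k)}$ and hence belongs to $\mathcal E_{i(j,k)}$. Likewise the residual $\Fg\{\,\Delta\,\}$ lies in every $\mathcal E_i$. Thus $w(\mathcal F_1,\ldots,\mathcal F_n)$ is below a finite composition of elements of $\bigcup_i\mathcal E_i$, placing it in $\Ig^\circ\{\,\bigcup_i\mathcal E_i\,\}=\bigvee_i\mathcal E_i$. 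The main obstacle is purely the bookkeeping of the iterated peeling: one must order the applications of Lemma~\ref{T:RelationsLemma} correctly and keep track of the stray $\Fg\{\,\Delta\,\}$ entries introduced at each step, but this is exactly the filter-level analog of the relation-level computation used for superequivalences and involves no new ideas.
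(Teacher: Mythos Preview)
Your proof is correct and follows the same approach as the paper. The paper's proof of the join case is quite terse—``from Lemma~\ref{T:RelationsLemma}, it follows that $w(\mathcal F_1,\ldots,\mathcal F_n)$ is $\leq$ a finite composition of elements of $\bigcup_i\mathcal E_i$''—and you have simply unpacked this: the iterated peeling via Lemma~\ref{T:RelationsLemma}, the use of $\mathrm{(SU_r)}$ to ensure $\Fg\{\,\Delta\,\}\in\mathcal E_{i(j,k)}$, and the compatibility of each $\mathcal E_{i(j,k)}$ to place the single-coordinate filters where they belong, are exactly the ingredients the paper's one-line argument is invoking.
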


\begin{proof} Let $\mathcal E_i$ be superuniformities on $A$ compatible with an $n$-ary operation $w$. We have $\bigwedge_i\mathcal E_i=\bigcap_i\mathcal E_i$. We must show that if $\mathcal F_1$, $\ldots$, $\mathcal F_n\in\mathcal E_i$ for all $i$, then $w(\mathcal F_1,\ldots,\mathcal F_n)\in\mathcal E_i$ for all $i$. However, each $\mathcal E_i$ is compatible.

On the other hand, we have $\bigvee_i\mathcal E_i=\Ig^\circ\{\,\bigcup_i\mathcal E_i\,\}$. Given $w$ and $\mathcal F_1$, $\ldots$, $\mathcal F_n\in\bigvee_i\mathcal E_i$, we need to have $w(\mathcal F_1,\ldots,\mathcal F_n)\in\bigvee_i\mathcal E_i$. However, from Lemma~\ref{T:RelationsLemma}, it follows that $w(\mathcal F_1,\ldots,\mathcal F_n)$ is $\leq$ a finite composition of elements of $\bigcup_i\mathcal E_i$.
\end{proof}

\begin{example}[Continuation of Example~\ref{E:IgU}] If $\mathcal U$ is a compatible uniformity on an algebra $A$, then $\Ig\{\,\mathcal U\,\}$ is a compatible superuniformity on $A$.
\end{example}

\begin{example}[Continuation of Example~\ref{E:PhiOff}] Let $A$ be an algebra, $\pair B{\mathcal T}$ be an algebra of the same type with a compatible topology, and $f:A\to B$ a homomorphism. Then $\Phi(f,\mathcal T)$ is not necessarily a compatible superuniformity on $A$. For, consider $A=\mathbb Q$, the ring of rational numbers, $B=\mathbb C$, the ring of complex numbers, $\mathcal T$ the usual topology on $\mathbb C$, and $f$ the embedding. Let $w$ be the basic operation of addition, $\mathcal F=f^{-1}(\{\,\pair xy\mid |x-y|\leq\varepsilon\,\})\cap\{\,U^2\cap\Delta_{\mathbb C}\,\})$, where $\varepsilon>0$ and $U$ is the unit ball. Then $\mathcal F+\mathcal F\notin\Phi(f,\mathcal T)$, because it relates $x$ and $x+\varepsilon$ for all $x\in\mathbb Q$, and that is not contained in the inverse image of a compact set.
\end{example}

\begin{example}[Continuation of Example~\ref{E:Z}] If $\mathcal I$ is a compatible superequivalence on an algebra $A$, then $Z(\mathcal I)$ is a compatible superuniformity on $A$.
\end{example}

\subsection{Compatible superuniformities of congruence-permutable algebras}

As with algebras with a compatible uniformity, or with a compatible superequivalence, Mal'tsev's Theorem generalizes:

\begin{theorem}
Let $A$ be an algebra with a Mal'tsev term $p$. Then
any two compatible superuniformities on $A$ permute.
\end{theorem}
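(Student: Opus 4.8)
The plan is to imitate, one level up, the proof just given for superequivalences, replacing relations by filters of relations throughout. By the symmetry of the permutability condition it suffices to prove $\mathcal E\circ\mathcal E'\leq\mathcal E'\circ\mathcal E$ for arbitrary compatible superuniformities $\mathcal E$, $\mathcal E'$; applying this with the roles of $\mathcal E$ and $\mathcal E'$ interchanged then yields the reverse inequality, and hence equality. Since $\mathcal E\circ\mathcal E'=\Ig\{\,\mathcal F\circ\mathcal F'\mid\mathcal F\in\mathcal E,\ \mathcal F'\in\mathcal E'\,\}$ and $\mathcal E'\circ\mathcal E$ is an ideal (hence downward closed) in $\Fil\Rel{|A|}$, it is enough to show that for each $\mathcal F\in\mathcal E$ and each $\mathcal F'\in\mathcal E'$ we have $\mathcal F\circ\mathcal F'\in\mathcal E'\circ\mathcal E$.

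First I would record the two filters that do the work. By $\mathrm{(SU_r)}$ we have $\Fg\{\,\Delta\,\}\in\mathcal E$ and $\Fg\{\,\Delta\,\}\in\mathcal E'$. Using compatibility of $\mathcal E'$ and $\mathcal E$ with the term operation $p$ (compatibility with the basic operations extends to all term operations, exactly as for $\Unif$ and for superequivalences), I form $p(\Fg\{\,\Delta\,\},\Fg\{\,\Delta\,\},\mathcal F')\in\mathcal E'$ and $p(\mathcal F,\Fg\{\,\Delta\,\},\Fg\{\,\Delta\,\})\in\mathcal E$, so that their composite lies in $\mathcal E'\circ\mathcal E$.

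The remaining step is the filter inequality
\[\mathcal F\circ\mathcal F'\leq p(\Fg\{\,\Delta\,\},\Fg\{\,\Delta\,\},\mathcal F')\circ p(\mathcal F,\Fg\{\,\Delta\,\},\Fg\{\,\Delta\,\}),\]
which, because the ordering on filters is reverse inclusion, reduces to a relation-level containment on bases: for $R\in\mathcal F$ and $R'\in\mathcal F'$ I must exhibit a base relation of the left-hand filter contained in the corresponding base relation $p(\Delta,\Delta,R')\circ p(R,\Delta,\Delta)$ of the right-hand filter. This is the identical Mal'tsev computation used for superequivalences: if $a\mathrel Rb\mathrel{R'}c$ then $a=p(a,b,b)\mathrel{p(\Delta,\Delta,R')}p(a,b,c)\mathrel{p(R,\Delta,\Delta)}p(b,b,c)=c$, so $R\circ R'\subseteq p(\Delta,\Delta,R')\circ p(R,\Delta,\Delta)$, and $R\circ R'$ is itself a base relation of $\mathcal F\circ\mathcal F'$. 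The downward closure of the ideal $\mathcal E'\circ\mathcal E$ then gives $\mathcal F\circ\mathcal F'\in\mathcal E'\circ\mathcal E$, completing the argument.

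I expect no serious obstacle, since the mathematical content is the single Mal'tsev identity already verified one level down. The only points needing care are bookkeeping rather than mathematics: keeping straight that filters are ordered by reverse inclusion (so that the relation-level $\subseteq$ produces the filter-level $\leq$ in the right direction), checking that $\{\,p(R,\Delta,\Delta)\mid R\in\mathcal F\,\}$ really is a base for $p(\mathcal F,\Fg\{\,\Delta\,\},\Fg\{\,\Delta\,\})$ (which follows from monotonicity of term operations on relations, so that replacing each reflexive argument by the smallest choice $\Delta$ gives the smallest base relations), and invoking that compatibility with $p$, as a term built from the basic operations, is available. Given these, the proof is a routine transcription of the superequivalence case.
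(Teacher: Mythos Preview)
Your proposal is correct and follows essentially the same route as the paper: reduce to showing $\mathcal F\circ\mathcal F'\in\mathcal E'\circ\mathcal E$ for each $\mathcal F\in\mathcal E$, $\mathcal F'\in\mathcal E'$, build the witness filter $p(\Fg\{\Delta\},\Fg\{\Delta\},\mathcal F')\circ p(\mathcal F,\Fg\{\Delta\},\Fg\{\Delta\})\in\mathcal E'\circ\mathcal E$ via $\mathrm{(SU_r)}$ and compatibility with $p$, and verify the filter inequality by the same relation-level Mal'tsev computation $R\circ R'\subseteq p(\Delta,\Delta,R')\circ p(R,\Delta,\Delta)$. Your bookkeeping remarks about the reverse ordering on filters and the base of $p(\mathcal F,\Fg\{\Delta\},\Fg\{\Delta\})$ are accurate and match what the paper uses implicitly.
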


\begin{proof}
Let $\mathcal E$, $\mathcal E'$ be compatible superuniformities on $A$.
It suffices to show $\mathcal E\circ\mathcal E'\leq\mathcal E'\circ\mathcal E$. Thus it suffices to show, if $\mathcal F\in\mathcal E$, $\mathcal F'\in\mathcal E'$, then $\mathcal F\circ\mathcal F'\in\mathcal E'\circ\mathcal E$.

$\Fg\{\,\Delta\,\}$ belongs to every superuniformity, by $\mathrm{(SU_r)}$.
We have
\[p(\Fg\{\,\Delta\,\},\Fg\{\,\Delta\,\},\mathcal F')\circ p(\mathcal F,\Fg\{\,\Delta\,\},\Fg\{\,\Delta\,\})\in\mathcal E\]
since $\mathcal E$ is compatible and satisfies $\mathrm{(SU_t)}$.

If $X\in p(\Fg\{\,\Delta\,\},\Fg\{\,\Delta\,\},\mathcal F')\circ
p(\mathcal F,\Fg\{\,\Delta\,\},\Fg\{\,\Delta\,\})$, then
for some $F\in\mathcal F$, $F'\in\mathcal F'$, we have
\[p(\Delta,\Delta,F')\circ p(F,\Delta,\Delta)\subseteq X;\]
$a\mathrel F b\mathrel F' c$ then implies
\[a=p(a,b,b)\mathrel {p(\Delta,\Delta,F')} p(a,b,c)
\mathrel{p(F,\Delta,\Delta)}p(b,b,c)=c;\]
thus,
$\mathcal F\circ\mathcal F'\leq  p(\Fg\{\,\Delta\,\},\Fg\{\,\Delta\,\},\mathcal F')\circ
p(\mathcal F,\Fg\{\,\Delta\,\},\Fg\{\,\Delta\,\})$ and $\mathcal F\circ\mathcal F'\in\mathcal E'\circ\mathcal E$.
\end{proof}

\begin{corollary}
Let $A$ be an algebra with a Mal'tsev term.  Then $\OpSuperUnif A$ is modular.
\end{corollary}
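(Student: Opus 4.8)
The plan is to obtain the corollary directly by combining two results already established, exactly as in the parallel corollary for superequivalences. The statement $\OpSuperUnif A$ is modular should follow from the general principle, proved earlier, that a sublattice of $\OpSuperUnif S$ whose elements permute pairwise is automatically modular; the role of the Mal'tsev term is only to supply the hypothesis of pairwise permutability.

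First I would record that the immediately preceding Theorem shows that, since $A$ has a Mal'tsev term $p$, any two compatible superuniformities on $A$ permute. Next I would invoke the earlier Theorem asserting that the meet and join of a tuple of superuniformities compatible with an operation $w$ are again compatible with $w$; applying this for every basic operation shows that $\OpSuperUnif A$, as a subset of the lattice $\OpSuperUnif |A|$ of superuniformities on the underlying set, is closed under the meet and join operations of $\OpSuperUnif |A|$, i.e.\ it is a sublattice of $\OpSuperUnif |A|$. Thus $\OpSuperUnif A$ is a sublattice of $\OpSuperUnif S$ (with $S=|A|$) whose elements permute pairwise.

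Finally I would apply the Theorem stating that a pairwise-permuting sublattice $L$ of $\OpSuperUnif S$ is modular, taking $S=|A|$ and $L=\OpSuperUnif A$, to conclude that $\OpSuperUnif A$ is modular. I do not expect a genuine obstacle here: all the real work has been done in the permutability Theorem (whose content is the Mal'tsev-term computation $\mathcal F\circ\mathcal F'\leq p(\Fg\{\,\Delta\,\},\Fg\{\,\Delta\,\},\mathcal F')\circ p(\mathcal F,\Fg\{\,\Delta\,\},\Fg\{\,\Delta\,\})$) and in the general permutability-implies-modularity Theorem (whose content is the shifting identity $(\mathcal F\circ\mathcal F')\wedge\mathcal F''\leq\mathcal F\circ(\mathcal F'\wedge(\mathcal F^{\text{op}}\circ\mathcal F''))$). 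The only point requiring care, and the closest thing to a subtlety, is verifying that $\OpSuperUnif A$ really is a \emph{sublattice} of $\OpSuperUnif |A|$ rather than merely a complete-meet-closed subset, so that the hypothesis of the modularity Theorem genuinely applies; this is precisely what the compatibility-of-meets-and-joins Theorem guarantees.
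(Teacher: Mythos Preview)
Your proposal is correct and is exactly the argument the paper intends: the corollary is stated without proof because it follows immediately from combining the preceding permutability theorem with the earlier theorem that any pairwise-permuting sublattice of $\OpSuperUnif S$ is modular, together with the fact that $\OpSuperUnif A$ is a sublattice of $\OpSuperUnif|A|$. Your identification of the sublattice verification as the only point requiring care is apt, and the paper's theorem on compatibility of meets and joins supplies precisely that.
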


\subsection{Compatible superuniformities on algebras in congruence-modular
Varieties}\label{S:CMSU}

The above results show  that in the case of congruence-permutable varieties $\mathbf V$, the lattices $\OpSuperUnif A$ behave similarly to the lattices $\Con A$, $\OpUnif A$, and $\OpSuperEqv A$. 
Now we ask whether $\OpSuperUnif A$ is modular when
$A\in\mathbf V$, when $\mathbf V$ satisfies the weaker condition of congruence-modularity.

\begin{notation} Given a finite tuple $\mathbf m$ of quaternary terms, and given a filter $\mathcal F$ of relations on an algebra $A$, we define
\[\mathbf m(\mathcal F)=\Fg\{\,\mathbf m(R,R,R,R)\mid R\in\mathcal F\,\},\]
where
\begin{align*}
\mathbf m(R)&=\bigcup_im_i(R,R,R,R)\\
&=\bigcup_i\{\,\pair{m_i(a,b,c,d)}{m_i(a',b',c',d')}\mid
a\mathrel Ra',b\mathrel Rb',c\mathrel Rc',\text{ and }d\mathrel Rd'\,\}.
\end{align*}\end{notation}

\begin{lemma}\label{T:mofRSU}
$\mathcal F\leq\mathbf m(\mathcal F)$.
\end{lemma}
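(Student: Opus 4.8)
The plan is to unwind the reverse-inclusion ordering on filters and reduce the statement to the relational inequality already isolated in Lemma~\ref{T:mofRSE}(1). Recall that filters of relations are ordered by reverse inclusion, so the assertion $\mathcal F\leq\mathbf m(\mathcal F)$ is, read as a statement about sets of relations, the inclusion $\mathbf m(\mathcal F)\subseteq\mathcal F$. Hence it suffices to show that every relation belonging to $\mathbf m(\mathcal F)=\Fg\{\,\mathbf m(R)\mid R\in\mathcal F\,\}$ already lies in $\mathcal F$.

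First I would check that each generator of $\mathbf m(\mathcal F)$ lies in $\mathcal F$. Indeed, for $R\in\mathcal F$, Lemma~\ref{T:mofRSE}(1) gives $R\subseteq\mathbf m(R)$ — this is immediate from the Day identity $m_i(x,y,y,x)=x$ specialized to $m_i(a,a,a,a)=a$, applied coordinatewise — and since a filter of relations is upward closed under containment, the coarser relation $\mathbf m(R)$ also belongs to $\mathcal F$. Therefore the generating set $\{\,\mathbf m(R)\mid R\in\mathcal F\,\}$ is a subset of $\mathcal F$.

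Finally, because $\mathcal F$ is itself a filter — closed under finite meets (intersections) and upward closed under containment — the filter generated by any subset of $\mathcal F$ is again contained in $\mathcal F$. Applying this to the generating set just described yields $\mathbf m(\mathcal F)\subseteq\mathcal F$, which is exactly $\mathcal F\leq\mathbf m(\mathcal F)$. There is no real obstacle here beyond keeping the direction of the ordering straight; the only genuinely content-bearing input, the relational fact $R\subseteq\mathbf m(R)$, has already been established in Lemma~\ref{T:mofRSE}(1), so the filter-level statement follows formally from the closure properties of filters.
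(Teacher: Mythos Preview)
Your proof is correct and follows exactly the same idea as the paper's: both rest on the identity $m_i(a,a,a,a)=a$, which gives $R\subseteq\mathbf m(R)$ and hence (by upward closure of $\mathcal F$) forces every generator $\mathbf m(R)$ of $\mathbf m(\mathcal F)$ to lie in $\mathcal F$. The paper's proof is the one-line ``$m_i(a,a,a,a)=a$ for all $i$ and all $a\in A$,'' leaving the filter-closure reasoning implicit; you have simply spelled that reasoning out.
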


\begin{proof} $m_i(a,a,a,a)=a$ for all $i$ and all $a\in A$.
\end{proof}

For this discussion, $\mathbf V$ will be a
congruence-modular variety, $m_i$, $i=0$, $\ldots$, $\D$
will be a sequence of Day terms for $\mathbf V$, and $A$
will be an algebra in $\mathbf V$. Our first goal will again be
 a generalization of the Shifting Lemma
\cite{Gumm}.

\begin{lemma} \label{T:Lemma1}
Let $\mathcal E\in\OpSuperUnif A$. If $\mathcal W\in\mathcal E$, and $Y\in\mathbf m(\mathcal W')^{2\D}$, the relational composition of $2\D$ copies of the filter $\mathbf m(\mathcal W')$,  where $\mathcal W'=\mathcal W\vee\mathcal W^{\mathrm{op}}$, then there is a $W\in\mathcal W$ such that if 
$a$, $b$, $c$, $d\in A$ with $b\mathrel{W}d$ and
$m_i(a,a,c,c)\mathrel{W}m_i(a,b,d,c)$ for all $i$,
then we have
$a\mathrel Yc$.
\end{lemma}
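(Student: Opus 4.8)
The plan is to reduce the statement to the purely relational computation already carried out in the proof of Lemma~\ref{T:Lemma1Eqv}; the only genuinely new ingredient is the passage from the composite \emph{filter} $\mathbf m(\mathcal W')^{2\D}$ to a single relation $W$ drawn from the filter $\mathcal W$.

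First I would unwind the filter $\mathbf m(\mathcal W')^{2\D}$. By definition $\mathbf m(\mathcal W')=\Fg\{\,\mathbf m(R)\mid R\in\mathcal W'\,\}$, and since $\mathbf m$ is monotone and $\mathcal W'$ is closed under finite intersections (given $R,S\in\mathcal W'$ one has $\mathbf m(R\cap S)\subseteq\mathbf m(R)\cap\mathbf m(S)$), the relations $\mathbf m(R)$ with $R\in\mathcal W'$ form a filter base for $\mathbf m(\mathcal W')$. Composing, the relations $\mathbf m(R_1)\circ\cdots\circ\mathbf m(R_{2\D})$ with each $R_j\in\mathcal W'$ form a base for $\mathbf m(\mathcal W')^{2\D}$. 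Hence, given $Y\in\mathbf m(\mathcal W')^{2\D}$, I can choose $R_1,\ldots,R_{2\D}\in\mathcal W'$ with $\mathbf m(R_1)\circ\cdots\circ\mathbf m(R_{2\D})\subseteq Y$.

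Next I would produce the relation $W$. Recall that $\mathcal W'=\mathcal W\vee\mathcal W^{\mathrm{op}}$ is, in the reverse-inclusion order, the intersection $\mathcal W\cap\mathcal W^{\mathrm{op}}$; in particular it is a filter, it is closed under $\mathrm{op}$, and $\mathcal W'\subseteq\mathcal W$ as sets of relations. I would therefore set $W=\bigcap_{j=1}^{2\D}(R_j\cap R_j^{\mathrm{op}})$. Then $W\in\mathcal W'\subseteq\mathcal W$, the relation $W$ is symmetric (so $W\cup W^{\mathrm{op}}=W$), and $W\subseteq R_j$ for every $j$; by monotonicity of $\mathbf m$ this yields $\mathbf m(W)^{\circ 2\D}\subseteq\mathbf m(R_1)\circ\cdots\circ\mathbf m(R_{2\D})\subseteq Y$.

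Finally I would invoke the relational core of Lemma~\ref{T:Lemma1Eqv}: that argument uses only the relation together with the Day identities, not the superequivalence structure, so applied to the symmetric relation $W$ it shows that $b\mathrel W d$ together with $m_i(a,a,c,c)\mathrel W m_i(a,b,d,c)$ for all $i$ forces $a\mathrel{\mathbf m(W)^{\circ 2\D}}c$, and hence $a\mathrel Y c$. The main obstacle is precisely the bookkeeping in the first two steps—identifying the base of the iterated composite filter and checking that intersecting and symmetrizing keeps us inside $\mathcal W$—after which the congruence-modular heart of the matter is exactly the relation-level shifting computation already established.
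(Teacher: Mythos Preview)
Your proposal is correct and follows essentially the same approach as the paper: the paper also picks relations $W_1,\ldots,W_{2\D}\in\mathcal W'$ with $\mathbf m(W_1)\circ\cdots\circ\mathbf m(W_{2\D})\subseteq Y$, sets $W=\bigcap_j W_j$, and then carries out the same Day-term chain $u_0,\ldots,u_\D$ as in Lemma~\ref{T:Lemma1Eqv}. The only cosmetic difference is that the paper repeats that relational computation inline rather than citing the earlier lemma, and it assumes the $W_j$ symmetric from the start rather than symmetrizing afterward as you do.
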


\begin{proof}
If $Y\in\mathbf m(\mathcal W')^{2\D}$, then there are reflexive, symmetric $W_1$, $\ldots$, $W_{2\D}\in\mathcal W'$ such that
$\mathbf m(W_1)\circ\ldots\circ\mathbf m(W_{2\D})\subseteq Y$.
Let $W=\bigcap_j W_j$ and note that if $b\mathrel Wd$, then $b\mathrel{\mathbf m(W)}d$ by Lemma~\ref{T:mofRSU}.

Let $u_i=m_i(a,b,d,c)$ and $v_i=m_i(a,a,c,c)$ for all $i$.
If $b\mathrel Wd$ and $m_i(a,a,c,c)\mathrel Wm_i(a,b,d,c)$ for all $i$, then for even $i<\D$, we have
$u_i\mathrel{\mathbf m(W)}v_i=v_{i+1}\mathrel{\mathbf m(W)}u_{i+1}$, while for  odd $i<\D$, we have
$u_i\mathrel{\mathbf m(W)}m_i(a,b,b,c)=m_{i+1}(a,b,b,c)\mathrel{\mathbf m(W)}u_{i+1}$.

Thus, for $i<\D$, $u_i\mathrel{\mathbf m(W)\circ\mathbf m(W)}u_{i+1}$, and consequently, $a=u_0\mathrel Yu_\D=c$.
\end{proof}

\begin{lemma}[Shifting Lemma for Superuniformities] \label{T:Lemma4}
Let $\mathcal R\in\Idl\Fil\Rel A$ satisfy $\mathrm{(SU_r)}$ and $\mathrm{(SU_s)}$ and be compatible, and let
$\mathcal E_1$,
$\mathcal E_2\in\OpSuperUnif A$ be such that
$\mathcal R\wedge\mathcal E_1\leq\mathcal E_2\leq\mathcal E_1$. Then
$(\mathcal R\circ(\mathcal E_1\wedge\mathcal E_2)\circ\mathcal
R)\wedge\mathcal E_1\leq\mathcal E_2$.
\end{lemma}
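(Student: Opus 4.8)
The plan is to transcribe, at the level of filters of relations, the proof of the Shifting Lemma for superequivalences (Lemma~\ref{T:ShiftSuperEqv}), descending to individual relations only at the very last step, where Lemma~\ref{T:Lemma1} does the work that Lemma~\ref{T:Lemma1Eqv} did there. First I would reduce to generators. Since $\leq$ on $\Idl\Fil\Rel A$ is inclusion and meets of ideals are intersections, every element of $(\mathcal R\circ(\mathcal E_1\wedge\mathcal E_2)\circ\mathcal R)\wedge\mathcal E_1$ is a filter $\mathcal K\in\mathcal E_1$ lying below a finite filter-join of filters $\mathcal G_j\circ\mathcal X_j\circ\mathcal G_j'$ with $\mathcal G_j,\mathcal G_j'\in\mathcal R$ and $\mathcal X_j\in\mathcal E_1\wedge\mathcal E_2$. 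Because $\mathcal R$ and $\mathcal E_1\wedge\mathcal E_2$ are ideals (closed under join) and $\circ$ is monotone, the single filters $\mathcal G=\bigvee_j(\mathcal G_j\vee\mathcal G_j')\in\mathcal R$ and $\mathcal X=\bigvee_j\mathcal X_j\in\mathcal E_1\wedge\mathcal E_2$ give an upper bound $\mathcal G\circ\mathcal X\circ\mathcal G$ of that join; hence $\mathcal K\leq(\mathcal G\circ\mathcal X\circ\mathcal G)\wedge\mathcal K$ with $\mathcal K\in\mathcal E_1$. So it suffices to show: for filters $\mathcal G\in\mathcal R$, $\mathcal H\in\mathcal E_1$, $\mathcal X\in\mathcal E_1\wedge\mathcal E_2$, there is a filter $\mathcal Y\in\mathcal E_2$ with $(\mathcal G\circ\mathcal X\circ\mathcal G)\wedge\mathcal H\leq\mathcal Y$; then $\mathcal K\leq\mathcal Y$ forces $\mathcal K\in\mathcal E_2$ by downward closure. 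Since $\mathcal R$, $\mathcal E_1$, $\mathcal E_2$ all satisfy $\mathrm{(SU_s)}$, I may take $\mathcal G$, $\mathcal H$, $\mathcal X$ symmetric, and since filters are upward closed, any relation later extracted from them may be enlarged to be reflexive and symmetric.

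Next comes the construction, the filter transcription of the relation $W$ of Lemma~\ref{T:ShiftSuperEqv}. I would set
\[\mathcal W=\mathbf m(\mathcal X)\vee\bigl(\mathbf m(\mathcal G)\wedge(\mathbf m(\mathcal H)\circ\mathbf m(\mathcal H)\circ\mathbf m(\mathcal X))\bigr),\]
with $\vee,\wedge$ taken in $\Fil\Rel A$, and take $\mathcal Y=\mathbf m(\mathcal W')^{2\D}$ where $\mathcal W'=\mathcal W\vee\mathcal W^{\mathrm{op}}$. Membership $\mathcal W\in\mathcal E_2$ is verified termwise: applying $\mathbf m$ to a filter of a compatible superuniformity keeps it inside, because $\mathbf m(\mathcal F)=\bigvee_i m_i(\mathcal F,\dots,\mathcal F)$ is a finite join of compatible operation-images, so $\mathbf m(\mathcal X)\in\mathcal E_2$; the meet term lies below $\mathbf m(\mathcal G)\in\mathcal R$ and below $\mathbf m(\mathcal H)\circ\mathbf m(\mathcal H)\circ\mathbf m(\mathcal X)\in\mathcal E_1$, hence lies in $\mathcal R\wedge\mathcal E_1\leq\mathcal E_2$ by downward closure; and $\mathcal E_2$ is closed under join. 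Then $\mathcal W'\in\mathcal E_2$ by $\mathrm{(SU_s)}$, and $\mathcal Y=\mathbf m(\mathcal W')^{2\D}\in\mathcal E_2$ by compatibility and $\mathrm{(SU_t)}$.

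Finally I would establish $(\mathcal G\circ\mathcal X\circ\mathcal G)\wedge\mathcal H\leq\mathcal Y$, which in the reverse filter order means each $Y\in\mathcal Y=\mathbf m(\mathcal W')^{2\D}$ lies in the filter $(\mathcal G\circ\mathcal X\circ\mathcal G)\wedge\mathcal H$; for this it is enough, by upward closure, to exhibit a relation $Z\subseteq Y$ belonging to that filter. Given $Y$, Lemma~\ref{T:Lemma1} supplies a relation $W\in\mathcal W$ such that $b\mathrel{W}d$ and $m_i(a,a,c,c)\mathrel{W}m_i(a,b,d,c)$ for all $i$ together imply $a\mathrel{Y}c$. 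Since $\mathcal W$ is a filter-join, $W$ lies in both $\mathbf m(\mathcal X)$ and the meet term, so I can extract relations $X_0\in\mathcal X$, $G_0\in\mathcal G$, $H_0\in\mathcal H$ (taking common intersections within each filter, and enlarging to be reflexive and symmetric) with $\mathbf m(X_0)\subseteq W$ and $\mathbf m(G_0)\cap(\mathbf m(H_0)\circ\mathbf m(H_0)\circ\mathbf m(X_0))\subseteq W$. Putting $Z=(G_0\circ X_0\circ G_0)\cap H_0\in(\mathcal G\circ\mathcal X\circ\mathcal G)\wedge\mathcal H$, I would check, exactly as in Lemma~\ref{T:ShiftSuperEqv}, that $a\mathrel{Z}c$ yields both $b\mathrel{W}d$ (via $X_0\subseteq\mathbf m(X_0)\subseteq W$, using Lemma~\ref{T:mofRSU}) and $m_i(a,a,c,c)\mathrel{W}m_i(a,b,d,c)$ for all $i$ (the chain through $\mathbf m(X_0)$ and the chain through $\mathbf m(G_0),\mathbf m(H_0)$, using the Day identities (D1)--(D5)), whence $a\mathrel{Y}c$ and $Z\subseteq Y$. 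I expect the main obstacle to be bookkeeping the reverse ordering of $\Fil\Rel A$ throughout — getting the correct direction of every $\circ$-monotonicity and downward-closure step — and choosing the extracted relations $X_0,G_0,H_0$ coherently so that the single $W$ furnished by Lemma~\ref{T:Lemma1} simultaneously dominates all the pieces the Day-term computation requires.
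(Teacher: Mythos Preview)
Your proof is correct and follows essentially the same route as the paper's: reduce to generators $\mathcal G\in\mathcal R$, $\mathcal H\in\mathcal E_1$, $\mathcal X\in\mathcal E_1\cap\mathcal E_2$, form the same filter $\mathcal W$, and feed the resulting relation into Lemma~\ref{T:Lemma1} via the Day-term chain. Your definition $\mathcal Y=\mathbf m(\mathcal W')^{2\D}$ with $\mathcal W'=\mathcal W\vee\mathcal W^{\mathrm{op}}$ is in fact more precise than the paper's $\mathcal Y=\mathcal W^{2\D}$, since it is exactly the hypothesis Lemma~\ref{T:Lemma1} requires; the paper's version appears to be a minor slip that your formulation repairs.
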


\begin{proof} 
It suffices to show that, given filters $\mathcal G\in\mathcal R$, $\mathcal F\in\mathcal E_1$, and $\mathcal X\in\mathcal E_1\wedge\mathcal E_2=\mathcal E_1\cap\mathcal E_2$, there is a filter $\mathcal Y\in\mathcal E_2$ such that
\[(\mathcal G\circ\mathcal X\circ\mathcal G)\wedge\mathcal F\leq\mathcal Y;\]
to prove this, it suffices to construct $\mathcal Y$ and show that given $Y\in\mathcal Y$, there are $G\in\mathcal G$, $X\in\mathcal X$, and $F\in\mathcal F$ such that $(G\circ X\circ G)\cap F\subseteq Y$, or, in other words, that $a\mathrel Gb\mathrel Xd\mathrel Gc$ and $a\mathrel Fc$ imply $a\mathrel Yc$.

Without loss of generality, we may assume that the given $\mathcal G$, $\mathcal F$, and $\mathcal X$ are symmetric (i.e., satisfy $\mathrm{(U_s)}$), because $\mathcal R$, $\mathcal E_1$, and $\mathcal E_2$ all satisfy $\mathrm{(SU_s)}$.

Let $\mathcal W=\mathbf m(\mathcal X)\vee(\mathbf m(\mathcal G)\wedge(\mathbf m(\mathcal F)\circ\mathbf m(\mathcal F)\circ\mathbf m(\mathcal X)))$. Since $\mathcal R$ is compatible, $\mathcal E_1$ and $\mathcal E_2$ are compatible superuniformities, and $\mathcal R\wedge\mathcal E_1\leq\mathcal E_2$, we have $\mathcal W\in\mathcal E_2$.

Let $\mathcal Y=\mathcal W^{2\D}$ and let $Y\in\mathcal Y$. By Lemma~\ref{T:Lemma1}, there is a $W\in\mathcal W$ such that $b\mathrel Wd$ and
$m_i(a,a,c,c)\mathrel Wm_i(a,b,d,c)$ for all $i$ imply $a\mathrel Yc$.

We have $\mathbf m(X)\cup(\mathbf m(G)\cap(\mathbf m(F)\circ\mathbf m(F)\circ\mathbf m(X)))\subseteq W$ for some $G\in\mathcal G$, $F\in\mathcal F$, and $X\in\mathcal X$.
Since $\mathcal G$ and $\mathcal F$ satisfy $\mathrm{(U_s)}$, we may assume $G$ and $F$ are symmetric, replacing them by $G\cap G^{\mathrm{op}}$ and $F\cap F^{\mathrm{op}}$ if necessary.

If $a\mathrel Gb\mathrel Xd\mathrel Gc$ and $a\mathrel Fc$, then
$m_i(a,a,c,c)\mathrel{\mathbf m(G)}m_i(a,b,d,c)$ for all $i$, and $m_i(a,a,c,c)\mathrel{\mathbf m(F)}m_i(a,a,a,a)=a=m_i(a,b,b,a)\mathrel{\mathbf m(F)}m_i(a,b,b,c)\mathrel{\mathbf m(X)}m_i(a,b,d,c)$ for all $i$. Thus, $m_i(a,a,c,c)\mathrel Wm_i(a,b,d,c)$ for all $i$. We also have $b\mathrel Wd$ because $b\mathrel Xd$.
Then by Lemma~\ref{T:Lemma1}, $a\mathrel Yc$.
\end{proof}

Finally, we have

\begin{theorem}\label{T:ModularSU} The lattice $\OpSuperUnif A$ is modular.
\end{theorem}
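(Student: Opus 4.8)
The plan is to mirror the proof of Theorem~\ref{T:ModularSE} (modularity of $\OpSuperEqv A$) almost verbatim, exploiting the structural parallel between the two settings: the role played there by the Shifting Lemma for superequivalences (Lemma~\ref{T:ShiftSuperEqv}) and the algebraicity of $\Idl\Rel A$ is played here by the Shifting Lemma for superuniformities (Lemma~\ref{T:Lemma4}) and the algebraicity of $\Idl\Fil\Rel A$. First I fix $\mathcal E$, $\mathcal E'$, $\mathcal E''\in\OpSuperUnif A$ with $\mathcal E\leq\mathcal E''$ and observe that the inequality $\mathcal E\vee(\mathcal E'\wedge\mathcal E'')\leq(\mathcal E\vee\mathcal E')\wedge\mathcal E''$ holds in every lattice under the hypothesis $\mathcal E\leq\mathcal E''$. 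Thus only the reverse inequality uses congruence-modularity.

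For the reverse inequality I set $\mathcal E_1=\mathcal E''$ and $\mathcal E_2=\mathcal E\vee(\mathcal E'\wedge\mathcal E'')$, so that $\mathcal E\leq\mathcal E_2\leq\mathcal E_1$, and define two increasing sequences of ideals of filters by $\mathcal R_0=\mathcal R'_0=\mathcal E'$, $\mathcal R_{k+1}=\mathcal R_k\circ\mathcal E\circ\mathcal R_k$, and $\mathcal R'_{k+1}=\mathcal R'_k\circ(\mathcal E_1\wedge\mathcal E_2)\circ\mathcal R'_k$. Because $\mathcal E\leq\mathcal E_1\wedge\mathcal E_2$, an easy induction gives $\mathcal R_k\leq\mathcal R'_k$ for all $k$, while $\bigcup_k\mathcal R_k=\mathcal E\vee\mathcal E'$, since repeated composition builds up exactly the finite composites generating the join.

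The heart of the argument is the induction showing $\mathcal R'_k\wedge\mathcal E_1\leq\mathcal E_2$ for every $k$. The base case is $\mathcal R'_0\wedge\mathcal E_1=\mathcal E'\wedge\mathcal E''\leq\mathcal E_2$. For the inductive step, I would apply Lemma~\ref{T:Lemma4} with $\mathcal R=\mathcal R'_k$: the inductive hypothesis gives $\mathcal R'_k\wedge\mathcal E_1\leq\mathcal E_2$, and since each $\mathcal R'_k$ satisfies $\mathrm{(SU_r)}$, $\mathrm{(SU_s)}$ and is compatible (these properties are inherited from $\mathcal E'$ and from the superuniformity $\mathcal E_1\wedge\mathcal E_2$, and are preserved under composition), the Shifting Lemma yields $\mathcal R'_{k+1}\wedge\mathcal E_1=(\mathcal R'_k\circ(\mathcal E_1\wedge\mathcal E_2)\circ\mathcal R'_k)\wedge\mathcal E_1\leq\mathcal E_2$. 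Finally, using that $\Idl\Fil\Rel A$ is algebraic, hence meet-continuous, and that the $\mathcal R'_k$ form an increasing chain, I conclude
\[(\mathcal E\vee\mathcal E')\wedge\mathcal E''=\Bigl(\bigcup_k\mathcal R_k\Bigr)\wedge\mathcal E''\leq\Bigl(\bigcup_k\mathcal R'_k\Bigr)\wedge\mathcal E''=\bigcup_k\bigl(\mathcal R'_k\wedge\mathcal E''\bigr)\leq\mathcal E_2=\mathcal E\vee(\mathcal E'\wedge\mathcal E''),\]
which completes the proof.

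The main conceptual obstacle has in fact already been cleared by establishing Lemma~\ref{T:Lemma4} and the algebraicity of $\OpSuperUnif S$; given those, the remaining difficulty is purely organizational. The one point demanding genuine care is the meet-continuity step, namely that meeting with $\mathcal E''$ commutes with the directed union $\bigcup_k\mathcal R'_k$ — this is exactly where algebraicity of $\Idl\Fil\Rel A$ is indispensable, and it is precisely the property whose failure for $\OpSemiUnif A$ blocked the analogous argument for ordinary compatible uniformities (cf.\ Example~\ref{E:UnifNonMod}).
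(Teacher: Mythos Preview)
Your proposal is correct and follows essentially the same approach as the paper's own proof: the same auxiliary sequences $\mathcal R_k$ and $\mathcal R'_k$, the same inductive application of the Shifting Lemma for superuniformities (Lemma~\ref{T:Lemma4}), and the same meet-continuity step using algebraicity of $\Idl\Fil\Rel A$. If anything, your write-up is slightly more careful than the paper's in spelling out the recursion for $\mathcal R'_{k+1}$ and in explicitly noting that the hypotheses $\mathrm{(SU_r)}$, $\mathrm{(SU_s)}$, and compatibility for $\mathcal R'_k$ needed by Lemma~\ref{T:Lemma4} are preserved along the construction.
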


\begin{proof}
Let $\mathcal E$, $\mathcal E'$,
$\mathcal E''\in\OpSuperUnif A$ be such
that $\mathcal E\leq\mathcal E''$. Then
\[\mathcal E\vee(\mathcal E'\wedge\mathcal E'')
\leq(\mathcal E\vee\mathcal E')\wedge\mathcal E'',
\]
as this inequality holds in every
lattice when $\mathcal E\leq\mathcal E''$.

Define $\mathcal R_k$ and $\mathcal R'_k$ for every natural number $k$ as $\mathcal
R_0=\mathcal R'_0=\mathcal E'$, $\mathcal R_{k+1}=\mathcal R_k\circ\mathcal E\circ\mathcal R_k$,
and $\mathcal R'_k=\mathcal R_k\circ(\mathcal E_1\wedge\mathcal E_2)\circ\mathcal R_k$ where $\mathcal E_1=\mathcal E''$ and $\mathcal E_2=\mathcal E\vee(\mathcal E'\wedge\mathcal E'')$. Since $\mathcal E\leq\mathcal E_2\leq\mathcal E_1$ we have for every $k$, $\mathcal R_k\leq\mathcal R'_k$. We also have $\mathcal R'_0\wedge\mathcal E_1=\mathcal E'\wedge\mathcal E''\leq\mathcal E_2$. Then by Lemma~\ref{T:Lemma4} and induction on $k$, we have
$\mathcal R'_k\wedge\mathcal E''=\mathcal R'_k\wedge\mathcal E_1\leq\mathcal E_2$ for all $k$. Since the lattice $\Idl\Fil\Rel A$ is algebraic,
we have
\begin{align*}
(\mathcal E\vee\mathcal E')\wedge\mathcal E''
&=\left(\bigcup_k\mathcal R_k\right)\wedge\mathcal E_1\\
&\leq\left(\bigcup_k\mathcal R'_k\right)\wedge\mathcal E_1\\
&=\bigcup_k\left(\mathcal R'_k\wedge\mathcal E_1\right)\\
&\leq\mathcal E_2=\mathcal E\vee(\mathcal E'\wedge\mathcal E'').
\end{align*}
\end{proof}

\begin{remark}
Theorem~\ref{T:ModularSU} and the embedding of Example~\ref{E:Z} give us an alternate proof of Theorem~\ref{T:ModularSE}.
\end{remark}

\section{Pattern}\label{S:Pattern}

After seeing Sections~\ref{S:Unif}, \ref{S:SuperEquivalences}, and~\ref{S:SuperUniformities}, including the theorems about modularity in Section~\ref{S:SuperEquivalences} and~\ref{S:SuperUniformities}, the reader may suspect that might be a general category-theoretic setting in which the modularity results can be proved, when, as in the case of superequivalences and super\-uniformities, there are suitable algebraicity assumptions.  Indeed, we are preparing a paper (\cite{rgeneq}) about such a setting.

\section{Conclusions}\label{S:Conclu}

The reader may also ask reasonably, are superequivalences and superuniformities useful? The answer is that they arise naturally in the theory of the categories $\Ind(\Set)$ and $\Ind(\CatFil)$, where $\Ind(\mathbf C)$ for a category $\mathbf C$ is the category of small filtered diagrams in the category $\mathbf C$ (see \cite[Section~IX.1]{macl} for the definition of a filtered category; a small filtered diagram is a functor $D:\mathbf D\to\mathbf C$ where $\mathbf D$ is small and filtered), and $\CatFil$ is the category of filters and germs of admissible partial functions, as defined (with very minor differences to account for some boundary cases) in \cite{kr70}. $\Ind(\Set)$ and $\Ind(\CatFil)$ are both cartesian-closed categories, and we believe $\Ind(\CatFil)$ will have an important role in Topological Algebra because, in addition to being cartesian-closed, and unlike the category of compactly-generated spaces advocated by Mac Lane \cite{macl} and others, algebra objects in congruence-modular varieties have modular quotient lattices. Of course, the reader needs details and these will be forthcoming in future papers.

\begin{comment}
\subsection*{Acknowledgement}
We would like to thank Keith Kearnes for his ideas on
this topic, and some lively discussions.
\end{comment}

\bibliographystyle{abbrv}
\bibliography{../Bibliographies/mybib}

\end{document}